\newtheorem{theorem}{Theorem}[section]
\newtheorem{lemma}[theorem]{Lemma}
\newtheorem{proposition}[theorem]{Proposition}
\newtheorem{corollary}[theorem]{Corollary}
\theoremstyle{definition}
\newtheorem{definition}[theorem]{Definition}
\begin{document}

\title{On crystal bases of two-parameter $(v, t)$-quantum groups}
\author{Weideng Cui}
\date{}
\maketitle \begin{abstract}Following Kashiwara's algebraic approach in one-parameter case, we construct crystal bases for two-parameter quantum algebras and for their integrable modules. We also show that the global crystal basis coincides with the canonical basis geometrically constructed by Fan and Li up to a 2-cocycle deformation.\end{abstract}

\thanks{{Keywords}: Two-parameter quantum groups; Crystal bases; Canonical bases} \large

\maketitle
\section{Introduction}

The theory of canonical basis for one-parameter quantum algebra was developed by Lusztig first in the ADE case (see [L1]) and subsequently in the general case (see [L3] and [L4]). In [Kas1] and [Kas2], Kashiwara constructed the crystal basis and the global crystal basis for the one-parameter quantum algebra associated to an arbitrary symmetrizable generalized Cartan matrix. The canonical basis and the global crystal basis of one-parameter quantum algebra were proved to be the same by Lusztig for type ADE in [L2] and by Grojnowski-Lusztig in the general case in [GL]. The canonical basis and the crystal basis have many remarkable properties, such as positivity, and promote the development in many areas of mathematics which are greatly related to quantum groups (see [Ar], [FZ] and [KL]).

Benkart, Kang and Kashiwara ([BKK]) constructed the crystal bases for quantum $\mathfrak{g}\mathfrak{l}(m|n)$ and for its polynomial representations at $q=0.$ Jeong, Kang and Kashiwara ([JKK]) constructed the global crystal bases for the quantum generalized Kac-Moody algebras and for their integrable modules. Clark, Hill and Wang ([CHW1]) constructed the crystal bases and its global version for quantum supergroups $\mathbf{U}_{q,\pi}(A)$ of anisotropic type and for their integrable modules. Recently, they ([CHW2]) constructed the canonical bases (=global crystal bases) for quantum supergroups when the Cartan data is of type $\mathfrak{g}\mathfrak{l}(m|1),$ $\mathfrak{o}\mathfrak{s}\mathfrak{p}(1|2n)$ and $\mathfrak{o}\mathfrak{s}\mathfrak{p}(2|2n),$ and also provided a new self-contained construction of canonical bases in the non-super case.

On the other hand, two-parameter and multi-parameter quantum algebras have been widely studied from the early 1990s by many authors; see [BGH1-2, BW1-2, C1-2, D1-5, FL1-2, HP1-2, PHR, JZ, R] and the references therein. Many facts and properties about the representation theory of one-parameter quantum algebras can be generalized to the two-parameter case, such as the semi-simplicity of the category of integrable modules.

Let $\mathfrak{g}$ be a symmetrizable Kac-Moody algebra. Fan and Li [FL1] constructed an algebra associated to $\mathfrak{g}$ from the mixed version of Lusztig's geometric framework by using mixed perverse sheaves on a quiver variety and Deligne's weight theory, and showed that this algebra is isomorphic to the negative part $U_{v,t}^{-}(\mathfrak{g})$ of a two-parameter quantum algebra $U_{v,t}(\mathfrak{g})$, in which the second parameter corresponds to the Tate twist.

From this geometric setting, they also obtained a basis for $U_{v,t}^{-}(\mathfrak{g})$, which is consisting of simple perverse sheaves of weight zero. If one forgets the Tate twist, this basis is exactly the same as the canonical basis in the one-parameter case. It also admits many favorable properties such as positivity and integrality as does its one-parameter analog. In a sequel paper [FL2], they showed that the categories of weight modules of this two-parameter quantum algebra and its one-parameter analog (letting $t=1$) are equivalent. Moreover, the integrable modules are preserved. Thus one obtains the semi-simplicity of the category of integrable $U_{v,t}(\mathfrak{g})$-modules.

In this paper, we develop the crystal basis theory for the two-parameter quantum algebras $U_{v,t}(\mathfrak{g})$ and for their integrable modules following the framework given in [Kas2]. We also show that the global crystal basis coincides with the canonical basis constructed in [FL1] up to a 2-cocycle deformation.

We organize this paper as follows. In Section 2, we study the two-parameter quantum algebra $U_{v,t}(\mathfrak{g})$ constructed in [FL1] and their integrable modules. In Section 3, we define the notion of crystal lattice and crystal basis for integrable modules in the two-parameter setting. We establish the tensor product rule of crystal bases and formulate a polarization on an integrable module. In Section 4, we introduce a two-parameter Kashiwara algebra in order to formulate the crystal basis of $U_{v,t}^{-}(\mathfrak{g})$, and we also establish its basic properties. Furthermore, we introduce a bilinear form (called polarization) on $U_{v,t}^{-}(\mathfrak{g})$ and the Kashiwara operators. In Section 5, we adapt Kashiwara's grand loop inductive argument to prove the existence theorem for crystal bases. In Section 6, we study further properties of the polarization, and show that the crystal basis is orthonormal with respect to the polarization at $v=0$. In Section 7, we prove the existence of global crystal bases for $U_{v,t}^{-}(\mathfrak{g})$ and all integrable modules $V(\lambda)$ with $\lambda\in P_{+}.$ We also show that the crystal bases and the global crystal bases are invariant under the star operator $*$.

\section{Two-parameter $(v,t)$-quantum algebras}

\subsection{Definition of $U_{v,t}(\mathfrak{g})$}

For any $n, k\in \mathbb{Z}_{\geq 0},$ and $n\geq k$, we set
$$[n]_{v}=\frac{v^{n}-v^{-n}}{v-v^{-1}},~~~[n]_{v}^{!}=\prod_{r=1}^{n}[r]_{v},~~~\left[\hspace*{-0.1cm}\begin{array}{c}n \\
k
\end{array}\hspace*{-0.1cm}\right]_{v}=\frac{[n]_{v}^{!}}{[k]_{v}^{!}[n-k]_{v}^{!}};$$
$$[n]_{v,t}=\frac{(vt)^{n}-(vt^{-1})^{-n}}{vt-(vt^{-1})^{-1}},~~~[n]_{v,t}^{!}=\prod_{r=1}^{n}[r]_{v,t},~~~\left[\hspace*{-0.1cm}\begin{array}{c}n \\
k
\end{array}\hspace*{-0.1cm}\right]_{v,t}=\frac{[n]_{v,t}^{!}}{[k]_{v,t}^{!}[n-k]_{v,t}^{!}}.$$

We shall review the definition of $U_{v,t}(\mathfrak{g})$ following [FL1]. Suppose that the following data are given.\vskip2mm
\hspace*{2.4em} (a) a finite-dimensional $\mathbb{Q}$-vector space $\mathfrak{h},$\vskip1mm
\hspace*{2.4em} (b) a finite index set $I$ (the set of simple roots),\vskip1mm
\hspace*{2.4em} (c) a linearly independent subset $\{\alpha_{i}\in \mathfrak{h}^{*}; i\in I\}$ of $\mathfrak{h}^{*},$ \vskip1mm
\hspace*{2.4em} (d) a subset $\{h_{i}; i\in I\}$ of $\mathfrak{h}$, a subset $\{\Lambda_{i}; i\in I\}$ of $\mathfrak{h}^{*}$ with $\langle h_{j}, \Lambda_{i}\rangle=\delta_{ij},$ \vskip1mm
\hspace*{2.4em} (e) a $\mathbb{Q}$-valued symmetric bilinear form $(\cdot,\cdot)$ on $\mathfrak{h}^{*},$\vskip1mm
\hspace*{2.4em} (f) a root lattice $Q$ and a weight lattice $P$ of $\mathfrak{h}^{*}.$

\noindent We assume that they satisfy the following properties:\vskip1mm
\hspace*{2.4em} (a) $\langle h_{i}, \lambda\rangle=\frac{2(\alpha_{i}, \lambda)}{(\alpha_{i}, \alpha_{i})}$ for any $i\in I$ and $\lambda\in \mathfrak{h}^{*}$.\vskip1mm
\hspace*{2.4em} (b) $(\alpha_{i}, \alpha_{i})\in 2\mathbb{Z}_{> 0}$, $\langle h_{i}, \alpha_{i}\rangle=2.$\vskip1mm
\hspace*{2.4em} (c) $\langle h_{i}, \alpha_{j}\rangle\in \mathbb{Z}_{\leq 0}$ for $i\neq j$ and $\langle h_{i}, \alpha_{j}\rangle=0\Leftrightarrow \langle h_{j}, \alpha_{i}\rangle=0.$\vskip1mm
\hspace*{2.4em} (d) $\alpha_{i}\in P$ and $h_{i}\in P^{*}=\{h\in \mathfrak{h}; \langle h, P\rangle\subset \mathbb{Z}\}$ for any $i.$\vskip1mm
\noindent Hence $\{\langle h_{i}, \alpha_{j}\rangle\}$ is a symmetrizable generalized Cartan matrix.

We also fix a matrix $\Lambda=(\Lambda_{ij})_{i,j\in I}$ with the following conditions: \vskip1mm
\hspace*{2.4em} (a) $\Lambda_{ii}\in \mathbb{Z}_{> 0},$ $\Lambda_{ij}\in \mathbb{Z}_{\leq 0}$ for all $i\neq j;$ \vskip1mm
\hspace*{2.4em} (b) $(\Lambda_{ij}+\Lambda_{ji})/\Lambda_{ii}\in \mathbb{Z}_{\leq 0}$ for all $i\neq j;$ \vskip1mm
\hspace*{2.4em} (c) the greatest common divisor of all $\Lambda_{ii}$ is equal to 1.\vskip1mm

\noindent To $\Lambda,$ we associate the following bilinear forms on $\mathbb{Z}^{I}:$ $$\langle i, j\rangle=\Lambda_{ij}, ~~~~i\cdot j=\langle i, j\rangle+\langle j, i\rangle.$$ Note that we have $$(\alpha_{i}, \alpha_{j})=i\cdot j, ~~~\langle h_{i}, \alpha_{j}\rangle=\frac{2i\cdot j}{i\cdot i}=(\Lambda_{ij}+\Lambda_{ji})/\Lambda_{ii} ~~\mathrm{for~ all}~ i,j\in I.$$

For $\lambda\in P,$ we linearly extend the bilinear form $\langle\cdot, \cdot\rangle$ to be defined on $P \times P$ such that $\langle\lambda, i\rangle=\langle\lambda, \alpha_i\rangle=\frac{1}{m}\sum\limits_{j=1}^{n}a_{j}\langle j, i\rangle,$ or $\langle i, \lambda\rangle=\langle \alpha_i, \lambda\rangle=\frac{1}{m}\sum\limits_{j=1}^{n}a_{j}\langle i, j\rangle$ for $\lambda=\frac{1}{m}\sum\limits_{j=1}^{n}a_{j}\alpha_j$ with $a_{j}\in \mathbb{Z},$ where $m$ is the possibly smallest positive integer such that $mP \subseteq Q.$

Let $\mathfrak{g}$ be the associated Kac-Moody Lie algebra. Then the two-parameter $(v,t)$-quantum algebra $U_{v,t}(\mathfrak{g})$ associated to $\mathfrak{g}$ is a unital associative $\mathbb{Q}(v,t)$-algebra generated by the elements $e_{i},$ $f_{i},$ $k_{i}^{\pm 1},$ $k_{i}'^{\pm 1}$ ($i\in I$) with the following fundamental relations: \vskip3mm
\hspace*{2.4em} (R1) $k_{i}^{\pm 1}k_{j}^{\pm 1}=k_{j}^{\pm 1}k_{i}^{\pm 1},~~~k_{i}'^{\pm 1}k_{j}'^{\pm 1}=k_{j}'^{\pm 1}k_{i}'^{\pm 1},$\vskip3mm
\hspace*{4.8em}      $k_{i}^{\pm 1}k_{j}'^{\pm 1}=k_{j}'^{\pm 1}k_{i}^{\pm 1},~~~k_{i}^{\pm 1}k_{i}^{\mp 1}=1=k_{i}'^{\pm 1}k_{i}'^{\mp 1};$\vskip3mm
\hspace*{2.4em} (R2) $k_{i}e_{j}k_{i}^{-1}=v^{i\cdot j}t^{\langle i, j\rangle-\langle j, i\rangle}e_{j},~~k_{i}'e_{j}k_{i}'^{-1}=v^{-i\cdot j}t^{\langle i, j\rangle-\langle j, i\rangle}e_{j},$\vskip3mm
\hspace*{4.8em}     $k_{i}f_{j}k_{i}^{-1}=v^{-i\cdot j}t^{\langle j, i\rangle-\langle i, j\rangle}f_{j},~~k_{i}'f_{j}k_{i}'^{-1}=v^{i\cdot j}t^{\langle j, i\rangle-\langle i, j\rangle}f_{j};$\vskip3mm
\hspace*{2.4em} (R3) $e_{i}f_{j}-f_{j}e_{i}=\delta_{ij}\frac{k_{i}-k_{i}'}{v_{i}-v_{i}^{-1}};$\vskip3mm
\hspace*{2.4em} (R4) $\sum\limits_{p+p'=1-\frac{2i\cdot j}{i\cdot i}}(-1)^{p}t_{i}^{-p\big(p'-2\frac{\langle i, j\rangle}{i\cdot i}+2\frac{\langle j, i\rangle}{i\cdot i}\big)}e_{i}^{[p']}e_{j}e_{i}^{[p]}=0$~~~if $i\neq j,$ \vskip3mm
\hspace*{4.8em} $\sum\limits_{p+p'=1-\frac{2i\cdot j}{i\cdot i}}(-1)^{p}t_{i}^{-p\big(p'-2\frac{\langle i, j\rangle}{i\cdot i}+2\frac{\langle j, i\rangle}{i\cdot i}\big)}f_{i}^{[p]}f_{j}f_{i}^{[p']}=0$~~~if $i\neq j,$\vskip3mm

\noindent where $v_{i}=v^{i\cdot i/2},$ $t_{i}=t^{i\cdot i/2},$ $e_{i}^{[p]}=\frac{e_{i}^{p}}{[p]_{v_{i},t_{i}}^{!}},$ $f_{i}^{[p]}=\frac{f_{i}^{p}}{[p]_{v_{i},t_{i}}^{!}}.$ The algebra $U_{v,t}(\mathfrak{g})$ has a Hopf algebra structure with the comultiplication $\Delta,$ the counit $\varepsilon$ and the antipode $S$ given as follows:\vskip3mm
\hspace*{2.4em} $\Delta(k_{i}^{\pm 1})=k_{i}^{\pm 1}\otimes k_{i}^{\pm 1},~~~\Delta(k_{i}'^{\pm 1})=k_{i}'^{\pm 1}\otimes k_{i}'^{\pm 1},$\vskip3mm
\hspace*{2.4em} $\Delta(e_{i})=e_{i}\otimes k_{i}'+ 1\otimes e_{i},~~~\Delta(f_{i})=f_{i}\otimes 1+ k_{i}\otimes f_{i},$\vskip3mm
\hspace*{2.4em} $\varepsilon(k_{i}^{\pm 1})=\varepsilon(k_{i}'^{\pm 1})=1,~~~\varepsilon(e_{i})=\varepsilon(f_{i})=0,~~~S(k_{i}^{\pm 1})=k_{i}^{\mp 1},$\vskip3mm
\hspace*{2.4em} $S(k_{i}'^{\pm 1})=k_{i}'^{\mp 1},~~~S(e_{i})=-e_{i}k_{i}'^{-1},~~~~S(f_{i})=-k_{i}^{-1}f_{i}.$\vskip3mm
Let $U_{v,t}^{+}(\mathfrak{g})$ (resp. $U_{v,t}^{-}(\mathfrak{g})$) be the subalgebra of $U_{v,t}(\mathfrak{g})$ generated by the elements $e_{i}$ (resp. $f_{i}$) for $i\in I,$ and let $U_{v,t}^{0}(\mathfrak{g})$ be the subalgebra of $U_{v,t}(\mathfrak{g})$ generated by $k_{i}^{\pm 1},$ $k_{i}'^{\pm 1}$ for $i\in I.$ Moreover, let $U_{v,t}^{\geq}$ (resp. $U_{v,t}^{\leq}$) be the subalgebra of $U_{v,t}(\mathfrak{g})$ generated by $e_{i},$ $k_{i}^{\pm 1}$ (resp. $f_{i},$ $k_{i}'^{\pm 1}$) for $i\in I.$

In analogy with the one-parameter case, we can prove the following proposition.
\begin{proposition}
{\rm (See [HP1, Proposition 2.4].)} There exists a unique bilinear form $\langle \cdot, \cdot\rangle: U_{v,t}^{\geq}\times U_{v,t}^{\leq}\longrightarrow\mathbb{Q}(v,t)$ such that for all $x,x'\in U_{v,t}^{\geq},$ $y,y'\in U_{v,t}^{\leq},$ $\mu,\nu\in Q$ and $i,j\in I,$ we have \vskip3mm
$\langle x, yy'\rangle=\langle \Delta(x), y\otimes y'\rangle,~~\langle xx', y\rangle=\langle x'\otimes x, \Delta(y)\rangle,~~\langle e_{i}, f_{j}\rangle=\delta_{ij}\frac{1}{v_{i}^{-1}-v_{i}},$\vskip3mm
$\langle k_{\nu}, k_{\mu}'\rangle=v^{\mu\cdot\nu}t^{\langle \nu, \mu\rangle-\langle \mu, \nu\rangle},~~\langle e_{i}, k_{\mu}'\rangle=\langle k_{\mu}, f_{i}\rangle=0,$\vskip3mm
\noindent where $k_{\mu}=\prod_{i\in I}k_{i}^{\mu_{i}},$ $k_{\mu}'=\prod_{i\in I}k_{i}'^{\mu_{i}}$ for each $\mu=\sum_{i\in I}\mu_{i}\alpha_{i}\in Q.$
\end{proposition}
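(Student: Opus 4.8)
The plan is to prove uniqueness and existence separately, adapting the construction of the Drinfeld--Rosso pairing from the one-parameter case while carefully tracking the extra powers of $t$. Throughout I would use that both $U_{v,t}^{\geq}$ and $U_{v,t}^{\leq}$ are $Q$-graded: assign weight $\alpha_{i}$ to $e_{i}$, weight $-\alpha_{i}$ to $f_{i}$, and weight $0$ to $k_{i}^{\pm 1},k_{i}'^{\pm 1}$, so that the comultiplications displayed above are homogeneous. A first observation, forced by $\langle e_{i},k_{\mu}'\rangle=\langle k_{\mu},f_{i}\rangle=0$ together with the two multiplicativity axioms, is that the form vanishes on pairs of elements whose $Q$-weights do not add to zero.

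For \emph{uniqueness} I would argue by induction on total degree in $Q$. Given monomials $x\in U_{v,t}^{\geq}$ and $y\in U_{v,t}^{\leq}$, I peel off one generator using the compatibility relations: writing $x=e_{i}x'$ gives $\langle e_{i}x',y\rangle=\langle x'\otimes e_{i},\Delta(y)\rangle$, and expanding $\Delta(y)$ via the generator coproducts and the homogeneity vanishing above leaves only finitely many terms in which $e_{i}$ is paired against a strictly lower-degree factor of $\Delta(y)$. Iterating, every value $\langle x,y\rangle$ is reduced to a product of the prescribed pairings $\langle e_{i},f_{j}\rangle$ and $\langle k_{\nu},k_{\mu}'\rangle$, so the form is completely determined.

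For \emph{existence} I would first define the form on the free algebras $T^{\geq}$, $T^{\leq}$ generated over the torus by the $e_{i}$ and the $f_{i}$, taking the recursive rule just described as the \emph{definition}; since there are no relations among the $e_{i}$ (resp.\ $f_{i}$) in a tensor algebra, this is automatically well defined, and the two multiplicativity identities hold on $T^{\geq}\times T^{\leq}$ by construction. The task is then to show that the form descends to the quotients $U_{v,t}^{\geq}\times U_{v,t}^{\leq}$, i.e.\ that the defining relations (R1)--(R4) lie in the radical. The relations (R1)--(R3) and the commutation of the torus with $e_{i},f_{i}$ reduce to direct checks against the generator pairings; here the asymmetry of the exponents $t^{\langle i,j\rangle-\langle j,i\rangle}$ in (R2) is precisely cancelled by the asymmetry of $\langle k_{\nu},k_{\mu}'\rangle=v^{\mu\cdot\nu}t^{\langle\nu,\mu\rangle-\langle\mu,\nu\rangle}$, which is the structural reason the two-parameter form survives.

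The hard part will be the quantum Serre relations (R4). I must show that the element
$$S_{ij}^{+}=\sum_{p+p'=1-\frac{2i\cdot j}{i\cdot i}}(-1)^{p}\,t_{i}^{-p\big(p'-2\frac{\langle i,j\rangle}{i\cdot i}+2\frac{\langle j,i\rangle}{i\cdot i}\big)}\,e_{i}^{[p']}e_{j}e_{i}^{[p]}$$
pairs to zero with every $f$-monomial (and symmetrically for the $f$-Serre element). Using the multiplicativity to transfer the products of $e_{i}$'s onto the coproduct side, this collapses, via the orthogonality by weight, into a purely combinatorial identity among two-parameter binomial coefficients $\left[\begin{smallmatrix}n\\k\end{smallmatrix}\right]_{v_{i},t_{i}}$ weighted by the signs and $t_{i}$-powers appearing in $S_{ij}^{+}$ --- the $(v,t)$-deformation of the classical vanishing of the $q$-Serre expression. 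I expect this single identity, where the additional $t_{i}$-exponents bookkeep the noncommutativity of the two gradings, to be the only genuinely delicate computation; once it is in hand, bilinearity and all the asserted identities on $U_{v,t}^{\geq}\times U_{v,t}^{\leq}$ follow immediately from their counterparts on the free algebras.
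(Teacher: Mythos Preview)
The paper does not actually supply a proof of this proposition: it merely introduces it with the sentence ``In analogy with the one-parameter case, we can prove the following proposition'' and cites [HP1, Proposition~2.4]. Your outline is exactly the standard Drinfeld--Rosso/Lusztig construction adapted to the $(v,t)$-setting and is the argument one would reconstruct from that reference, so in substance your approach coincides with what the paper intends.

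One small correction to your sketch: relation (R3) is \emph{not} a defining relation of either $U_{v,t}^{\geq}$ or $U_{v,t}^{\leq}$, since it mixes $e_{i}$ and $f_{j}$; those two Borel-type subalgebras are presented using only the torus relations from (R1), the relevant half of (R2), and the Serre relations (R4). Hence you do not need to check that (R3) lies in the radical. What remains is precisely what you describe: the torus--torus and torus--generator relations are direct checks matched by the exponents in $\langle k_{\nu},k_{\mu}'\rangle$, and the only nontrivial computation is the vanishing of the pairing against the $(v,t)$-Serre elements, which reduces to a two-parameter binomial identity as you indicate.
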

Based on Proposition 2.1, with the similar argument of Theorem 2.5 and Corollary 2.6 in [BGH1], we have
\begin{corollary}{\rm (See [HP1, Corollary 2.5 and 2.6].)}
$U_{v,t}(\mathfrak{g})$ can be realized as a Drinfel'd double of Hopf subalgebras $U_{v,t}^{\geq}$ and $U_{v,t}^{\leq}$ with respect to the pairing $\langle \cdot, \cdot\rangle.$ Moreover, $U_{v,t}(\mathfrak{g})$ has the standard triangular decomposition $U_{v,t}(\mathfrak{g})\cong U_{v,t}^{-}(\mathfrak{g})\otimes U_{v,t}^{0}(\mathfrak{g})\otimes U_{v,t}^{+}(\mathfrak{g}).$
\end{corollary}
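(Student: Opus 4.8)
The plan is to realize $U_{v,t}(\mathfrak{g})$ as a Drinfel'd double in the standard way, treating the bilinear form of Proposition 2.1 as a skew-Hopf pairing between the two Borel-type subalgebras. First I would verify that $(U_{v,t}^{\geq}, U_{v,t}^{\leq}, \langle\cdot,\cdot\rangle)$ forms a pair of Hopf algebras in skew-Hopf duality: the two identities $\langle x, yy'\rangle = \langle \Delta(x), y\otimes y'\rangle$ and $\langle xx', y\rangle = \langle x'\otimes x, \Delta(y)\rangle$ are precisely the multiplicativity conditions, and from them together with the normalizations on the generators one deduces compatibility with counit and antipode, i.e. $\langle x, 1\rangle = \varepsilon(x)$, $\langle 1, y\rangle = \varepsilon(y)$, and $\langle S(x), y\rangle = \langle x, S^{-1}(y)\rangle$. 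Crucially, one checks that the form is nondegenerate on each graded piece; this rests on the explicit values $\langle e_i, f_j\rangle = \delta_{ij}(v_i^{-1}-v_i)^{-1}$ and $\langle k_\nu, k_\mu'\rangle = v^{\mu\cdot\nu}t^{\langle\nu,\mu\rangle - \langle\mu,\nu\rangle}$, and proceeds exactly as in the one-parameter argument adapted in [BGH1].

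Given the pairing, I would form the Drinfel'd double $D = D(U_{v,t}^{\geq}, U_{v,t}^{\leq})$, whose underlying coalgebra is $U_{v,t}^{\geq}\otimes (U_{v,t}^{\leq})^{\mathrm{cop}}$ and whose multiplication twists the two factors past each other by the standard double rule, schematically
$$(x\otimes y)(x'\otimes y') = \sum \langle S(x'_{(1)}), y_{(1)}\rangle\,\langle x'_{(3)}, y_{(3)}\rangle\; xx'_{(2)}\otimes y_{(2)}y'.$$
I would then define $\Phi: D \to U_{v,t}(\mathfrak{g})$ on the inclusions of the two Borel subalgebras, $x\otimes y\mapsto xy$, and verify it is a homomorphism of Hopf algebras. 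The only relation not inherited directly from $U_{v,t}^{\geq}$ and $U_{v,t}^{\leq}$ is the cross-commutator between $e_i$ and $f_j$: specializing the double multiplication to these generators, using $\Delta(e_i) = e_i\otimes k_i' + 1\otimes e_i$ and $\Delta(f_i) = f_i\otimes 1 + k_i\otimes f_i$ together with the antipode formulas, one recovers precisely $e_if_j - f_je_i = \delta_{ij}(k_i - k_i')/(v_i - v_i^{-1})$, which is (R3) (the sign matching the one built into the antipode terms). The quantum Serre relations (R4) and the torus relations (R1)--(R2) hold in each factor, so $\Phi$ is well defined.

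Surjectivity of $\Phi$ is immediate since the images generate $U_{v,t}(\mathfrak{g})$. For injectivity I would use nondegeneracy of the pairing: the double is, as a vector space, $U_{v,t}^{\geq}\otimes U_{v,t}^{\leq}$, and the standard double argument shows that any kernel element would force degeneracy of $\langle\cdot,\cdot\rangle$ on some weight space, contradicting the previous step. Once $\Phi$ is an isomorphism, the triangular decomposition follows formally: the double gives $U_{v,t}(\mathfrak{g})\cong U_{v,t}^{\geq}\otimes U_{v,t}^{\leq}$, and in the two-parameter setting the Cartan generators $k_i$ and $k_i'$ remain independent, so $U_{v,t}^{\geq}\cong U_{v,t}^{+}\otimes \mathbb{Q}(v,t)[k_i^{\pm1}]$ and $U_{v,t}^{\leq}\cong U_{v,t}^{-}\otimes \mathbb{Q}(v,t)[k_i'^{\pm1}]$; commuting the two halves of the torus together via (R1) and reordering yields $U_{v,t}(\mathfrak{g})\cong U_{v,t}^{-}\otimes U_{v,t}^{0}\otimes U_{v,t}^{+}$.

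The main obstacle is the nondegeneracy of the pairing on each weight space, which is what upgrades the triangular multiplication map from merely surjective to bijective. This is the genuinely nontrivial input; everything else is bookkeeping with the Hopf structure. In the one-parameter case it is handled by a PBW-type basis together with an inductive computation of the Gram matrix of $\langle\cdot,\cdot\rangle$, and I expect the two-parameter deformation to go through unchanged, since the extra variable $t$ enters only through invertible monomial factors $t^{\langle\nu,\mu\rangle-\langle\mu,\nu\rangle}$ that do not affect the rank of the form. This is precisely the point at which one invokes the analogy with [BGH1] asserted in the statement.
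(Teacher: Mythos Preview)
Your proposal is correct and follows essentially the same approach the paper invokes: the paper gives no self-contained proof here but simply cites the argument of [BGH1, Theorem 2.5 and Corollary 2.6] and [HP1, Corollary 2.5--2.6], which is precisely the Drinfel'd double construction you outline. One minor remark: for the injectivity of $\Phi: D \to U_{v,t}(\mathfrak{g})$ you do not actually need to invoke nondegeneracy of the pairing. Since $D$ satisfies all of (R1)--(R4) (the Serre relations hold in each Borel factor, and the cross-relations you compute give (R2) and (R3)), there is a well-defined homomorphism $\Psi: U_{v,t}(\mathfrak{g}) \to D$ sending generators to generators; checking $\Phi\circ\Psi$ and $\Psi\circ\Phi$ on generators shows both composites are identities, so $\Phi$ is an isomorphism directly. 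Nondegeneracy is the deeper fact (and is indeed the nontrivial content behind Proposition 2.1), but it is not the mechanism driving injectivity of $\Phi$ in this particular step.
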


\subsection{Automorphisms of $U_{v,t}(\mathfrak{g})$}

There exists a unique automorphism $\omega$ of $U_{v,t}(\mathfrak{g})$ satisfying
$$\omega(e_{i})=f_{i},~\omega(f_{i})=e_{i},~\omega(k_{i})=k_{i}',~\omega(k_{i}')=k_{i},~\omega(v)=v,~\omega(t)=t^{-1}.$$

There exists a unique anti-automorphism $*$ of $U_{v,t}(\mathfrak{g})$ satisfying
$$e_{i}^{*}=e_{i},~f_{i}^{*}=f_{i},~k_{i}^{*}=k_{i}',~k_{i}'^{*}=k_{i},~v^{*}=v,~t^{*}=t^{-1}.$$

There exists a unique automorphism $-$ of $U_{v,t}(\mathfrak{g})$ satisfying
$$\overline{e_{i}}=e_{i},~\overline{f_{i}}=f_{i},~\overline{k_{i}}=k_{i}',~\overline{k_{i}'}=k_{i},~\overline{v}=v^{-1},~\overline{t}=t.$$

There exists a unique $\mathbb{Q}(v,t)$-automorphism $\sigma$ of $U_{v,t}(\mathfrak{g})$ satisfying
$$\sigma(e_{i})=-v_{i}^{-1}e_{i},~\sigma(f_{i})=-v_{i}f_{i},~\sigma(k_{i})=k_{i},~\sigma(k_{i}')=k_{i}'.$$

Let $\tau=\sigma\circ \omega \circ S.$ Then $\tau$ is the unique $\mathbb{Q}(v)$-anti-automorphism of $U_{v,t}(\mathfrak{g})$ satisfying
$$\tau(e_{i})=v_{i}^{-1}k_{i}^{-1}f_{i},~\tau(f_{i})=v_{i}^{-1}k_{i}'^{-1}e_{i},~\tau(k_{i})=k_{i}'^{-1},~\tau(k_{i}')=k_{i}^{-1},~\tau(t)=t^{-1}.$$

Furthermore, one can easily check (by evaluating at the generators) that $$\tau^{2}=1~~~~\mathrm{and}~~~~\Delta\circ\tau=(\tau\otimes\tau)\circ\Delta.$$

\subsection{Integrable representations}

Let $M$ be a $U_{v,t}(\mathfrak{g})$-module. For any $\lambda\in P,$ we set $$M_{\lambda}=\{y\in M;~k_{i}y=v^{i\cdot \lambda}t^{\langle i, \lambda\rangle-\langle \lambda, i\rangle}y,~k_{i}'y=v^{-i\cdot \lambda}t^{\langle i, \lambda\rangle-\langle \lambda, i\rangle}y,~\forall i\in I\}.$$
We say that $M$ is integrable if $M$ satisfies the following conditions:\vskip2mm
(1) $M$ has a weight space decomposition $M=\bigoplus_{\lambda\in P}M_{\lambda}$ and $\dim M_{\lambda}<\infty$ for any $\lambda\in P$.

(2) There exists a finite subset $F$ of $P$ such that wt$(M)\subset F+Q_{-},$ where $Q_{-}=\sum_{i}\mathbb{Z}_{\leq 0}\alpha_{i}.$

(3) For any $i,$ $e_{i}$ and $f_{i}$ are locally nilpotent on $M.$\vskip2mm

Let $P_{+}=\{\lambda\in P; \langle h_{i}, \lambda\rangle\geq 0 ~\mathrm{for~any}~i\in I\}$ and let $\mathcal{O}_{\mathrm{int}}$ denote the category of integrable $U_{v,t}(\mathfrak{g})$-modules.

Lusztig [L4, Theorem 6.2.2] gave the complete reducibility theorem in the quantum case, which was inspired by the proof of the analogous result in the Kac-Moody algebra case (see [Kac]). Fan and Li recently showed that the categories of weight modules of this two-parameter quantum algebra and the ordinary quantum algebra (letting $t=1$) are equivalent (see [FL2, Theorem 4.1(d) and Corollary 4.3]). Moreover, the integrable modules are preserved (see [FL2, Appendix A]). Thus one can obtain that $\mathcal{O}_{\mathrm{int}}$ is a semisimple category and its irreducible objects are isomorphic to some $V(\lambda)$ for some $\lambda\in P_{+},$ where $V(\lambda)$ is the irreducible $U_{v,t}(\mathfrak{g})$-module of highest weight $\lambda$ with highest weight vector $y_{\lambda},$ which is defined by
\begin{align*}
V(\lambda)&=\mathbf{U}/I_{\lambda}\\
&\cong U_{v,t}^{-}(\mathfrak{g})/(\sum\limits_{i}U_{v,t}^{-}(\mathfrak{g})f_{i}^{1+\langle h_{i}, \lambda\rangle}),
\end{align*}
\noindent where $\mathbf{U}=U_{v,t}(\mathfrak{g}),$ and
$$I_{\lambda}=\sum_{i}\mathbf{U}e_{i}+\sum_{\mu}\mathbf{U}(k_{\mu}-v^{\mu\cdot \lambda}t^{\langle \mu, \lambda\rangle-\langle \lambda, \mu\rangle})+\sum_{\mu}\mathbf{U}(k_{\mu}'-v^{-\mu\cdot \lambda}t^{\langle \mu, \lambda\rangle-\langle \lambda, \mu\rangle})+\sum\limits_{i}\mathbf{U}f_{i}^{1+\langle h_{i}, \lambda\rangle};$$ see also [BGH2, Theorem 3.15] and [PHR, Proposition 41] for the related work.

\section{Crystal bases of integrable $U_{v,t}(\mathfrak{g})$-modules}
In this section, we will develop the crystal basis theory for $U_{v,t}(\mathfrak{g})$-modules in the category $\mathcal{O}_{\mathrm{int}}.$
\subsection{Crystal bases}
Let $U_{v,t}(\mathfrak{g_{i}})$ be the subalgebra generated by $e_{i},$ $f_{i},$ $k_{i}^{\pm 1},$ $k_{i}'^{\pm 1}$ for some $i.$ Then it is obvious that $U_{v,t}(\mathfrak{g_{i}})\cong U_{v,t}(\mathfrak{sl_{2}})\cong U_{v}(\mathfrak{sl_{2}})\otimes_{\mathbb{Q}(v)}\mathbb{Q}(v,t)$ over $\mathbb{Q}(v,t).$ By the theory of integrable representations of $U_{v}(\mathfrak{sl_{2}})$ over $\mathbb{Q}(v)$, for an integrable $U_{v,t}(\mathfrak{g})$-module $M,$ we have $$M_{\lambda}=\bigoplus_{0\leq n\leq \langle h_{i}, \lambda+n\alpha_{i}\rangle}f_{i}^{(n)}(\mathrm{Ker}~e_{i}\cap M_{\lambda+n\alpha_{i}}),~~\mathrm{where}~f_{i}^{(n)}=\frac{f_{i}^{n}}{[n]_{v_{i}}^{!}}.$$
This means that for any weight vector $y\in M_{\lambda},$ there exists a unique family $\{y_{n}\}_{n\in \mathbb{Z}_{\geq 0}}$ of elements of $M$ such that $y=\sum_{n\geq 0}f_{i}^{(n)}y_{n}$ with $y_{n}\in \mathrm{Ker}~e_{i}\cap M_{\lambda+n\alpha_{i}}$ and $0\leq n\leq \langle h_{i}, \lambda+n\alpha_{i}\rangle.$ We call this expression the $i$-string decomposition of $y.$ For this $i$-string decomposition of $y,$ we define the Kashiwara operators $\tilde{e}_{i}$ and $\tilde{f}_{i}$ on $M$ by $$\tilde{e}_{i}(f_{i}^{(n)}y_{n})=f_{i}^{(n-1)}y_{n},~~~\tilde{f}_{i}(f_{i}^{(n)}y_{n})=f_{i}^{(n+1)}y_{n}.$$

Let $\mathbf{A}$ be the subring of $\mathbb{Q}(v,t)$ consisting of rational functions without poles at $v=0,$ that is, $\mathbf{A}$ is the localization of the ring $\mathbb{Q}(t)[v]$ at the prime ideal $\mathbb{Q}(t)[v]v.$ Thus the local ring $\mathbf{A}$ is a principal ideal domain with $\mathbb{Q}(v,t)$ as its field of quotients.\vskip2mm

\begin{definition} Let $M$ be an integrable $U_{v,t}(\mathfrak{g})$-module. A pair $(L, B)$ is called a (lower) crystal basis of $M$ if it satisfies the following conditions:\vskip2mm
(1) $L$ is a free $\mathbf{A}$-submodule of $M$ such that $M\cong \mathbb{Q}(v,t)\otimes_{\mathbf{A}}L;$

(2) $L=\bigoplus_{\lambda\in P}L_{\lambda},$ where $L_{\lambda}=L\cap M_{\lambda};$

(3) $\tilde{e}_{i}L\subset L, \tilde{f}_{i}L\subset L$ for any $i;$

(4) $B$ is a $\mathbb{Q}(t)$-basis of $L/vL\cong \mathbb{Q}(t)\otimes_{\mathbf{A}}L;$

(5) $B=\bigsqcup_{\lambda\in P}B_{\lambda},$ where $B_{\lambda}=B\cap L_{\lambda}/vL_{\lambda};$

(6) $\tilde{e}_{i}B\subset B\cup\{0\}$ and $\tilde{f}_{i}B\subset B\cup\{0\}$ for any $i;$

(7) for any $i$ and $b,b'\in B,$ $b'=\tilde{f}_{i}b$ if and only if $b=\tilde{e}_{i}b'.$
\end{definition}
If a free $\mathbf{A}$-submodule $L$ of $M$ satisfies (1), (2) and (3) as above, then we call it a crystal lattice.
\begin{lemma}{\rm (See [JKK, Lemma 4.5].)}
let $M\in \mathcal{O}_{\mathrm{int}},$ and let $L$ be a free $\mathbf{A}$-submodule of $M$ satisfying the conditions $(1)$ and $(2)$ in Definition 3.1. We have

$(a)$ $L$ is a crystal lattice if and only if $L=\bigoplus_{n\geq 0}f_{i}^{(n)}(\mathrm{Ker}~e_{i}\cap L)$ for every $i\in I.$

$(b)$ Let $\tilde{e}_{i}'$ and $\tilde{f}_{i}'$ be operators on $M$ such that $$\tilde{e}_{i}'(f_{i}^{(n)}y)=a_{\lambda,n}^{i}f_{i}^{(n-1)}y,~~~~\tilde{f}_{i}'(f_{i}^{(n)}y)=b_{\lambda,n}^{i}f_{i}^{(n+1)}y$$
for $y\in \mathrm{Ker}~e_{i}\cap M_{\lambda}$ with $f_{i}^{(n)}y\neq 0.$ Here, $a_{\lambda,n}^{i}$ and $b_{\lambda,n}^{i}$ are invertible elements of $\mathbf{A}.$ Then $L$ is a crystal lattice if and only if $\tilde{e}_{i}'L\subset L$ and $\tilde{f}_{i}'L\subset L.$ Moreover, if we define the operators $R_{i}$ and $S_{i}$ by $$R_{i}(f_{i}^{(n)}y)=a_{\lambda,n}^{i}(v=0)f_{i}^{(n)}y,~~~~S_{i}(f_{i}^{(n)}y)=b_{\lambda,n}^{i}(v=0)f_{i}^{(n)}y,$$
then $R_{i}L\subset L, S_{i}L\subset L$ and the induced actions of $\tilde{e}_{i}'$ and $\tilde{f}_{i}'$ on $L/vL$ coincide with those of $\tilde{e}_{i}\circ R_{i}$ and $\tilde{f}_{i}\circ S_{i},$ respectively. In particular, we have $\tilde{e}_{i}'^{-1}(L)=\tilde{e}_{i}^{-1}(L).$
\end{lemma}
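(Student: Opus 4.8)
The plan is to reduce the entire statement to the rank-one situation. Since $U_{v,t}(\mathfrak{g}_i)\cong U_v(\mathfrak{sl}_2)\otimes_{\mathbb{Q}(v)}\mathbb{Q}(v,t)$ and both the divided powers $f_i^{(n)}$ and the Kashiwara operators $\tilde e_i,\tilde f_i$ are defined over $\mathbb{Q}(v)$ (they involve no $t$), I would run the one-parameter argument verbatim on the $i$-string decomposition $M_\lambda=\bigoplus_{n\ge 0}f_i^{(n)}(\mathrm{Ker}\,e_i\cap M_{\lambda+n\alpha_i})$ recalled just above. The only structural novelty relative to [JKK, Lemma 4.5] is that the residue field of the local ring $\mathbf{A}$ is now $\mathbb{Q}(t)$ rather than $\mathbb{Q}$; I will keep track of this, but it never obstructs the reasoning, because the coefficients $a_{\lambda,n}^i,b_{\lambda,n}^i$ are units of $\mathbf{A}$ and hence reduce modulo $v$ to nonzero, invertible elements of $\mathbb{Q}(t)$. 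The step I expect to require the most care is the forward direction of part $(a)$ — showing a crystal lattice \emph{must} split as $\bigoplus_n f_i^{(n)}(\mathrm{Ker}\,e_i\cap L)$ — since this is the structural heart on which everything else rests.

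For $(a)$, the inclusion $\supseteq$ and the implication ``decomposition $\Rightarrow$ crystal lattice'' are immediate: if $w=\sum_n f_i^{(n)}y_n$ with $y_n\in\mathrm{Ker}\,e_i\cap L$, then $\tilde e_i w=\sum_n f_i^{(n-1)}y_n$ and $\tilde f_i w=\sum_n f_i^{(n+1)}y_n$ again lie in $\bigoplus_n f_i^{(n)}(\mathrm{Ker}\,e_i\cap L)\subset L$, giving condition $(3)$. For the converse I would take $w\in L$ with $i$-string decomposition $w=\sum_{n=0}^N f_i^{(n)}u_n$, $u_n\in\mathrm{Ker}\,e_i\cap M$, $u_N\neq 0$, and use the key identity $\tilde e_i^{\,N}w=u_N$; by $\tilde e_i$-stability this forces $u_N\in\mathrm{Ker}\,e_i\cap L$, whence $f_i^{(N)}u_N=\tilde f_i^{\,N}u_N\in L$ by $\tilde f_i$-stability. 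Then $w-f_i^{(N)}u_N\in L$ has a strictly shorter string, and induction on $N$ finishes the claim.

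For the first assertion of $(b)$ I would combine $(a)$ with the invertibility of the coefficients. Iterating $\tilde e_i'$ sends $f_i^{(n)}u_n$ to $\big(\prod_{j}a_{\mathrm{wt}(u_n),j}^i\big)f_i^{(n-k)}u_n$, i.e.\ a unit of $\mathbf{A}$ times $f_i^{(n-k)}u_n$; hence the same top-component extraction as in $(a)$ shows that $\tilde e_i'L\subset L$ and $\tilde f_i'L\subset L$ force $L=\bigoplus_n f_i^{(n)}(\mathrm{Ker}\,e_i\cap L)$, and conversely this decomposition makes $\tilde e_i',\tilde f_i'$ preserve $L$ simply because $a_{\lambda,n}^i,b_{\lambda,n}^i\in\mathbf{A}$. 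By $(a)$ this is exactly the crystal lattice condition.

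For the second assertion, $R_i$ and $S_i$ scale each summand $f_i^{(n)}(\mathrm{Ker}\,e_i\cap L)$ by $a_{\lambda,n}^i(v=0),\,b_{\lambda,n}^i(v=0)\in\mathbb{Q}(t)\subset\mathbf{A}$, so they preserve $L$; and since $a_{\lambda,n}^i-a_{\lambda,n}^i(v=0)\in v\mathbf{A}$, the operators $\tilde e_i'$ and $\tilde e_i\circ R_i$ differ on each $f_i^{(n)}u_n$ by an element of $vL$, so they induce the same map on $L/vL$ (and likewise $\tilde f_i'$ and $\tilde f_i\circ S_i$). Finally, for $w=\sum_n f_i^{(n)}u_n\in M$ I would observe that both $\tilde e_i w$ and $\tilde e_i'w$ lie in $L$ precisely when $u_n\in\mathrm{Ker}\,e_i\cap L$ for all $n\ge 1$ — for $\tilde e_i'$ this uses that each unit $a_{\mathrm{wt}(u_n),n}^i$ may be cancelled inside the $\mathbf{A}$-module $\mathrm{Ker}\,e_i\cap L$ — which gives $\tilde e_i'^{-1}(L)=\tilde e_i^{-1}(L)$. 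Throughout, the bookkeeping with the weight indices $\mathrm{wt}(u_n)=\lambda+n\alpha_i$ is routine; the single point that genuinely needs the two-parameter setting is the harmless reduction $a_{\lambda,n}^i\mapsto a_{\lambda,n}^i(v=0)\in\mathbb{Q}(t)^\times$, which holds because $\mathbf{A}$ is local with residue field $\mathbb{Q}(t)$.
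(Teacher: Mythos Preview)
Your argument is correct, and it is the standard string-extraction proof one finds in [JKK, Lemma 4.5] (and ultimately in Kashiwara's original work): peel off the top term of the $i$-string decomposition using $\tilde e_i^{\,N}w=u_N$, feed it back via $\tilde f_i^{\,N}$, and induct. The handling of part~(b) via units of $\mathbf{A}$ and their reductions modulo $v$ is likewise the expected one, and you are right that the only two-parameter wrinkle is that the residue field is $\mathbb{Q}(t)$ rather than $\mathbb{Q}$, which is harmless.

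Note, however, that the paper does \emph{not} supply its own proof of this lemma: it is stated with the parenthetical ``(See [JKK, Lemma 4.5])'' and no argument is given. So there is nothing in the paper to compare against beyond the implicit claim that the JKK proof carries over verbatim. Your write-up is precisely the filling-in of that omitted argument, and it matches what the cited reference does.
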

\begin{lemma}
Let $M$ be an integrable $U_{v,t}(\mathfrak{g})$-module and let $(L, B)$ be a crystal basis of $M.$ For $i\in I$ and $y\in L_{\lambda},$ let $y=\sum_{n\geq 0}f_{i}^{(n)}y_{n}$ be the $i$-string decomposition of $y.$ Then the following statements are true.

$(a)$ $y_{n}\in L$ for all $n\geq 0.$

$(b)$ If $y+vL\in B,$ then there exists a non-negative integer $n_{0}$ such that $y_{n}\in vL$ for each $n\neq n_{0},$ $y_{n_{0}}\in B$ modulo $vL,$ and $y\equiv f_{i}^{(n_{0})}y_{n_{0}}~\mathrm{mod}~vL.$ In particular, we have $\tilde{e}_{i}y\equiv f_{i}^{(n_{0}-1)}y_{n_{0}}$ and $\tilde{f}_{i}y\equiv f_{i}^{(n_{0}+1)}y_{n_{0}}~\mathrm{mod}~vL.$
\end{lemma}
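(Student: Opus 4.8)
Since $(L,B)$ is a crystal basis, $L$ is in particular a crystal lattice, so by Lemma 3.2(a) we have $L=\bigoplus_{n\geq 0}f_i^{(n)}(\mathrm{Ker}\,e_i\cap L)$; intersecting with the weight space $M_\lambda$ gives $L_\lambda=\bigoplus_{n\geq 0}f_i^{(n)}(\mathrm{Ker}\,e_i\cap L_{\lambda+n\alpha_i})$. Writing $y\in L_\lambda$ in this decomposition yields $y=\sum_n f_i^{(n)}z_n$ with $z_n\in\mathrm{Ker}\,e_i\cap L_{\lambda+n\alpha_i}\subseteq L$. Since the $i$-string decomposition of $y$ in $M$ is unique, this forces $z_n=y_n$, so $y_n\in L$ for all $n$. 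This settles part $(a)$, which is the easy half.

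For part $(b)$ I would work entirely inside $\overline{L}:=L/vL$, exploiting that everything is controlled by the rank-one subalgebra $U_{v,t}(\mathfrak{g_i})\cong U_v(\mathfrak{sl_2})\otimes_{\mathbb{Q}(v)}\mathbb{Q}(v,t)$; since $\mathbf{A}$ is defined by the behaviour at $v=0$ only, the second parameter $t$ plays no essential role and the $\mathfrak{sl}_2$ complete reducibility $M_\lambda=\bigoplus_n f_i^{(n)}(\mathrm{Ker}\,e_i\cap M_{\lambda+n\alpha_i})$ is available. The first step is to record the \emph{column grading}: using $vL=\bigoplus_n f_i^{(n)}\big(v(\mathrm{Ker}\,e_i\cap L)\big)$ one obtains a direct sum $\overline{L}=\bigoplus_{n\geq 0}C_n$, where $C_n$ is the image of $f_i^{(n)}(\mathrm{Ker}\,e_i\cap L)$ in $\overline{L}$, and by the definition of the Kashiwara operators $\tilde{f}_i(C_n)\subseteq C_{n+1}$ and $\tilde{e}_i(C_n)\subseteq C_{n-1}$ (with $C_{-1}=0$). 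The technical heart of this step, and what I expect to be the main obstacle, is to prove that $\tilde{e}_i\colon C_n\to C_{n-1}$ is \emph{injective} for $n\geq 1$, so that $\ker(\tilde{e}_i|_{\overline{L}})=C_0$. This requires decomposing an element of $\mathrm{Ker}\,e_i\cap L_{\lambda+n\alpha_i}$ into its $\mathfrak{sl}_2$-isotypic pieces and checking that $f_i^{(n)}u\in vL$ forces $u\in vL$; the point that a single weight space $M_{\lambda+n\alpha_i}$ contributes highest weight vectors of one fixed highest weight $\langle h_i,\lambda+n\alpha_i\rangle\geq n$ makes the relevant $f_i^{(n)}$ injective and keeps this bookkeeping clean.

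Granting the column grading, homogeneity of $B$ follows formally. First, any $b\in B$ with $\tilde{e}_i b=0$ lies in $\ker(\tilde{e}_i|_{\overline{L}})=C_0$. Next, for arbitrary $b\in B_\lambda$, integrability (condition (2) defining $\mathcal{O}_{\mathrm{int}}$) forces $\tilde{e}_i^{\,k}b=0$ for $k\gg 0$; taking $k$ to be the largest integer with $\tilde{e}_i^{\,k}b\neq 0$ and setting $b_0:=\tilde{e}_i^{\,k}b$, properties (6) and (7) of Definition 3.1 give $b_0\in B$, $\tilde{e}_i b_0=0$, and $b=\tilde{f}_i^{\,k}b_0$. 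Since $b_0\in C_0$ and $\tilde{f}_i$ raises the column index by one, $b=\tilde{f}_i^{\,k}b_0\in C_k$; hence every element of $B$ is homogeneous, lying in a single column $C_{n_0}$.

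Finally I would apply this to $\overline{y}\in B$: homogeneity puts $\overline{y}$ into a single $C_{n_0}$, which by the injectivity of $f_i^{(n)}$ on $\mathrm{Ker}\,e_i\cap L_{\lambda+n\alpha_i}$ translates into $y_n\in vL$ for all $n\neq n_0$ and $y\equiv f_i^{(n_0)}y_{n_0}\bmod vL$. Applying $\tilde{e}_i^{\,n_0}$ and using injectivity of $\tilde{e}_i$ on the columns, $\overline{y_{n_0}}=\tilde{e}_i^{\,n_0}\overline{y}$ is a nonzero element of $B\cup\{0\}$, hence $y_{n_0}\in B$ modulo $vL$; the identities $\tilde{e}_i y\equiv f_i^{(n_0-1)}y_{n_0}$ and $\tilde{f}_i y\equiv f_i^{(n_0+1)}y_{n_0}\bmod vL$ then follow at once from the definition of the Kashiwara operators on the $i$-string decomposition. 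The whole argument is a faithful transcription of Kashiwara's rank-one reduction, with the only genuine verification being the injectivity statement flagged in the second paragraph.
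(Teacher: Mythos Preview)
The paper states Lemma~3.3 without proof, treating it as a standard consequence of the crystal basis axioms; the closest analogue with a proof in the paper is Lemma~5.2, whose argument adapts verbatim to the present setting. Your proposal is correct, but it is organized differently and is slightly more elaborate than necessary.

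For part~(a) your use of Lemma~3.2(a) is clean and in fact shorter than the descent argument the paper gives for Lemma~5.2 (apply $\tilde{e}_i^{\,r}$ and take the largest $m$ with $y_m\notin L$). For part~(b), the paper's approach (in Lemma~5.2) is more direct than your column-grading formalism: one simply takes $n_0$ to be the largest index with $y_{n_0}\notin vL$, observes that $\tilde{e}_i^{\,n_0}y\equiv y_{n_0}\bmod vL$ since all higher $y_n$ already lie in $vL$, concludes $y_{n_0}+vL\in B$ by axiom~(6), and then recovers $y\equiv f_i^{(n_0)}y_{n_0}$ from axiom~(7) via $y+vL=\tilde{f}_i^{\,n_0}\tilde{e}_i^{\,n_0}(y+vL)$. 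Your argument reaches the same conclusion through the decomposition $\overline{L}=\bigoplus_n C_n$, and the injectivity of $\tilde{e}_i\colon C_n\to C_{n-1}$ you flag as the ``main obstacle'' is actually immediate: under the identifications $C_n(\lambda)\cong(\mathrm{Ker}\,e_i\cap L_{\lambda+n\alpha_i})/v(\mathrm{Ker}\,e_i\cap L_{\lambda+n\alpha_i})\cong C_{n-1}(\lambda+\alpha_i)$ induced by the direct sum in Lemma~3.2(a), the map $\tilde{e}_i$ is the identity. So both routes work; the paper's is shorter, while yours makes the underlying $i$-string structure on $L/vL$ more explicit.
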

\subsection{Tensor product rule}
Let $M$ be an integrable $U_{v,t}(\mathfrak{g})$-module with a crystal basis $(L, B)$. For $i\in I$ and $b\in B,$ we set $$\varepsilon_{i}(b)=\mathrm{max}\{n; \tilde{e}_{i}^{n}b\neq 0\}=\mathrm{max}\{n; b\in \tilde{f}_{i}^{n}B\},$$$$\varphi_{i}(b)=\mathrm{max}\{n; \tilde{f}_{i}^{n}b\neq 0\}=\mathrm{max}\{n; b\in \tilde{e}_{i}^{n}B\}.$$
\begin{theorem}
Let $M_{j}$ be an integrable $U_{v,t}(\mathfrak{g})$-module and let $(L_{j}, B_{j})$ be a crystal basis of $M_{j}$ $(j=1,2).$ Set $M=M_{1}\otimes M_{2},$ $L=L_{1}\otimes_{\mathbf{A}}L_{2},$ $B=B_{1}\otimes B_{2}=\{b_{1}\otimes b_{2}; b_{j}\in B_{j}\}.$ Then we have

$(a)$ $(L, B)$ is a crystal basis of $M.$

$(b)$ For $b_{1}\in B_{1},$ $b_{2}\in B_{2}$ and $i\in I,$ we have\[\tilde{f}_{i}(b_{1}\otimes b_{2})=\begin{cases}\tilde{f}_{i}b_{1}\otimes b_{2}& \mathrm{if}~ \varphi_{i}(b_1)> \varepsilon_{i}(b_2), \\ b_{1}\otimes\tilde{f}_{i}b_{2}& \mathrm{if}~
\varphi_{i}(b_1)\leq \varepsilon_{i}(b_2);
\end{cases}\]
\[\tilde{e}_{i}(b_{1}\otimes b_{2})=\begin{cases}\tilde{e}_{i}b_{1}\otimes b_{2}& \mathrm{if}~ \varphi_{i}(b_1)\geq \varepsilon_{i}(b_2), \\ b_{1}\otimes\tilde{e}_{i}b_{2}& \mathrm{if}~
\varphi_{i}(b_1)< \varepsilon_{i}(b_2).
\end{cases}\]
\end{theorem}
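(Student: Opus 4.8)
\section*{Proof proposal}

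The plan is to follow Kashiwara's argument for the one-parameter case (as reproduced in [Kas2] and [JKK, Section 4]) and to observe that the second parameter $t$ enters the coproduct only through integer powers $t^{c}$, which are units of $\mathbf{A}$ and therefore do not interfere with the $v=0$ limit defining the crystal. First I would reduce both assertions to the rank-one case. Each of the seven conditions in Definition 3.1, together with the operators $\tilde{e}_i,\tilde{f}_i$ and the $i$-string decomposition, depends only on the $U_{v,t}(\mathfrak{g}_i)$-module structure for the single fixed index $i$. Using the isomorphism $U_{v,t}(\mathfrak{g}_i)\cong U_v(\mathfrak{sl}_2)\otimes_{\mathbb{Q}(v)}\mathbb{Q}(v,t)$ recorded at the start of Section 3, I may assume $\mathfrak{g}=\mathfrak{sl}_2$, and since integrable modules and their crystal bases decompose as direct sums, I may further take $M_1$ and $M_2$ to be irreducible highest-weight modules.

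Next I would compute the coproduct action on weight vectors. For $u\in M_{1,\lambda}$ of $\mathfrak{sl}_2$-weight $l_1=\langle h_i,\lambda\rangle$ and $w\in M_{2,\mu}$ of weight $l_2=\langle h_i,\mu\rangle$, the formulas $\Delta(f_i)=f_i\otimes 1+k_i\otimes f_i$ and $\Delta(e_i)=e_i\otimes k_i'+1\otimes e_i$ give
\[
f_i(u\otimes w)=f_i u\otimes w+v_i^{l_1}t^{c}\,u\otimes f_i w,\qquad e_i(u\otimes w)=v_i^{-l_2}t^{c'}\,e_i u\otimes w+u\otimes e_i w,
\]
for suitable $c,c'\in\mathbb{Z}$, where I have used $i\cdot\lambda=\tfrac{i\cdot i}{2}l_1$ and $i\cdot\mu=\tfrac{i\cdot i}{2}l_2$. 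The decisive point is that the $v_i$-exponents $\pm l_1,\pm l_2$ are exactly those of the one-parameter case, whereas the factors $t^{c},t^{c'}$ lie in $\mathbb{Q}(t)^\times\subset\mathbf{A}^\times$; hence they neither change the $v$-adic valuation of any expression nor affect membership in $vL$, and they survive to $L/vL\cong\mathbb{Q}(t)\otimes_{\mathbf{A}}L$ only as rescalings of basis vectors by nonzero elements of $\mathbb{Q}(t)$. Iterating, I obtain the $v_i$-binomial expansion of $\Delta(f_i^{(n)})$ in which every coefficient, evaluated on weight vectors, is a power of $v_i$ times a unit of $\mathbf{A}$.

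With this in hand I would carry out the core rank-one computation exactly as in the one-parameter theory: applying the above to a tensor product of string vectors (available by Lemma 3.3) and extracting the leading term at $v=0$, one finds that $f_i$ acts on the first factor when $\varphi_i(b_1)>\varepsilon_i(b_2)$ and on the second factor otherwise, which is precisely formula (b); the competition between the two summands is governed by the comparison of the relevant $v_i$-exponents, that is, of $\varphi_i(b_1)$ and $\varepsilon_i(b_2)$. Part (a) then follows from (b): conditions (1), (2), (4), (5) of Definition 3.1 are immediate for $L=L_1\otimes_{\mathbf{A}}L_2$ and $B=B_1\otimes B_2$, while (3), (6), (7) are read off from the explicit formulas, which show that $\tilde{e}_i$ and $\tilde{f}_i$ preserve $L$ and permute $B\cup\{0\}$ compatibly.

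I expect the main obstacle to lie in the bookkeeping of the rank-one computation, and specifically in the point at which the two-parameter setting genuinely differs from the classical one: after normalizing by the appropriate divided power, one must confirm that the surviving coefficients $a_{\lambda,n}^i(v=0)$ and $b_{\lambda,n}^i(v=0)$ of Lemma 3.2(b) are nonzero elements of $\mathbb{Q}(t)$ rather than $0$ or $\infty$. Verifying that the accumulated $t$-powers never degenerate a coefficient that ought to be a unit is exactly what licenses the application of Lemma 3.2(b) and guarantees that the $t$-factors remain harmless spectators throughout; once this is established, the remaining combinatorics are identical to Kashiwara's one-parameter argument.
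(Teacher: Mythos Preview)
Your proposal is correct and follows essentially the same route as the paper: both reduce to the rank-one case via Lemma~3.3 and the isomorphism $U_{v,t}(\mathfrak{g}_i)\cong U_v(\mathfrak{sl}_2)\otimes_{\mathbb{Q}(v)}\mathbb{Q}(v,t)$, then invoke Kashiwara's original $\mathfrak{sl}_2$ computation. The paper's proof is a two-line citation to [Kas1, Kas2], whereas you spell out explicitly why the extra $t$-powers coming from the coproduct (via $k_i$ acting on weight vectors) are units of $\mathbf{A}$ and hence invisible at $v=0$; this is a point the paper leaves implicit in its appeal to the algebra isomorphism, and your treatment via Lemma~3.2(b) makes the passage rigorous.
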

\begin{proof}
By Lemma 3.3, it suffices to prove the tensor product rule for irreducible highest weight $U_{v,t}(\mathfrak{g_{i}})$-modules, thus it is reduced to the $U_{v}(\mathfrak{sl_{2}})$ case; see [Kas1, Proposition 6] and [Kas2, Theorem 1].
\end{proof}

\begin{proposition}
Let $M$ be an integrable $U_{v,t}(\mathfrak{g})$-module with a crystal basis $(L, B)$. For any $n\in \mathbb{Z}_{\geq 0}$ and $i\in I,$ we have $$(f_{i}^{n}M\cap L)/(f_{i}^{n}M\cap qL)=\bigoplus_{\substack{b\in B\\\varepsilon_{i}(b)\geq n}}\mathbb{Q}(t)b.$$ In particular, we have $\dim_{\mathbb{Q}(v,t)}(f_{i}^{n}M)_{\lambda}=\#\{b\in B_{\lambda}; \varepsilon_{i}(b)\geq n\}.$
\end{proposition}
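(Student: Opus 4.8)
The plan is to reduce the statement to the rank-one subalgebra $U_{v,t}(\mathfrak{g}_i)\cong U_{v_i}(\mathfrak{sl}_2)\otimes_{\mathbb{Q}(v)}\mathbb{Q}(v,t)$ and then to compute explicitly on a single $i$-string, with $\mathbb{Q}(t)$ now playing the role that $\mathbb{Q}$ plays in the one-parameter theory. First I would note that the subspace $f_i^n M$, the lattice $L$, and the operators $f_i^{(k)}$ all respect the decomposition of $M$ into irreducible $U_{v,t}(\mathfrak{g}_i)$-summands (the $i$-string decomposition underlying Lemma 3.2), so that $f_i^n M\cap L$, $f_i^n M\cap vL$, and their quotient may all be computed string by string and then summed over strings. (Here I read the $qL$ of the statement as $vL$, consistent with the notation $L/vL$ used throughout.)

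On a single string I would fix a highest weight vector $u\in\mathrm{Ker}\,e_i\cap L$ with $u\bmod vL\in B$, spanning a string of length $\ell$. By Lemma 3.2(a) the restriction of $L$ to this string is $\bigoplus_{k=0}^{\ell}\mathbf{A}f_i^{(k)}u$, the classes $b_k:=f_i^{(k)}u\bmod vL$ are exactly the elements of $B$ on the string, and $\varepsilon_i(b_k)=k$. The one computation that does the work is $f_i^{n}\big(f_i^{(j)}u\big)=[n]_{v_i}^{!}\left[\begin{smallmatrix} n+j \\ n \end{smallmatrix}\right]_{v_i}f_i^{(n+j)}u$ for $n+j\le\ell$ (and $0$ otherwise); since every quantum integer is a nonzero element of $\mathbb{Q}(v,t)$, this shows that the restriction of $f_i^n M$ to the string is precisely $\bigoplus_{k=n}^{\ell}\mathbb{Q}(v,t)f_i^{(k)}u$.

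Intersecting with $L$ in the $\mathbf{A}$-basis $\{f_i^{(k)}u\}$ is then coordinatewise, giving $f_i^n M\cap L=\bigoplus_{k\ge n}\mathbf{A}f_i^{(k)}u$ on the string, and likewise $f_i^n M\cap vL=\bigoplus_{k\ge n}v\mathbf{A}f_i^{(k)}u$; passing to the quotient yields $\bigoplus_{k\ge n}\mathbb{Q}(t)b_k=\bigoplus_{\varepsilon_i(b)\ge n}\mathbb{Q}(t)b$, and summing over strings gives the displayed identity. For the final dimension formula I would first record the clean identity $f_i^n M\cap vL=v(f_i^n M\cap L)$, which is immediate once one uses that $f_i^n M$ is a $\mathbb{Q}(v,t)$-subspace; thus the quotient in question is literally the reduction modulo $v$ of the $\mathbf{A}$-lattice $f_i^n M\cap L$. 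Since $\mathbf{A}$ is a principal ideal domain with residue field $\mathbb{Q}(t)$ and fraction field $\mathbb{Q}(v,t)$, and $f_i^n M\cap L$ is a free $\mathbf{A}$-module spanning $f_i^n M$ over $\mathbb{Q}(v,t)$, taking the weight-$\lambda$ part shows that $\dim_{\mathbb{Q}(v,t)}(f_i^n M)_\lambda$ equals the $\mathbf{A}$-rank of $(f_i^n M\cap L)_\lambda$, which equals $\#\{b\in B_\lambda;\ \varepsilon_i(b)\ge n\}$.

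The main obstacle is the reduction step rather than the arithmetic. One must choose the decomposition of $M$ into irreducible $U_{v,t}(\mathfrak{g}_i)$-summands compatibly with $L$ — concretely, by picking a weight-homogeneous $\mathbf{A}$-basis of $\mathrm{Ker}\,e_i\cap L$ as the set of highest weight vectors and invoking Lemma 3.2(a) — and then verify that an element of $L$ lying in the $\mathbb{Q}(v,t)$-subspace $f_i^n M$ has all of its string components again in $L$ and in $f_i^n M$, so that the intersection truly decomposes stringwise. This is also the point at which to confirm that passing from the one-parameter to the two-parameter setting contributes nothing beyond base change to $\mathbb{Q}(v,t)$; granting the compatible decomposition, the remaining steps are the same direct string calculation as in the one-parameter case.
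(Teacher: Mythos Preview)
The paper states this proposition without proof; it is the two-parameter analogue of the corresponding result in [Kas2], and your argument is precisely the standard one that the author is tacitly invoking. Your reduction via Lemma~3.2(a) to the $i$-string decomposition $L=\bigoplus_{k\ge 0}f_i^{(k)}(\mathrm{Ker}\,e_i\cap L)$, the identification $f_i^nM\cap L=\bigoplus_{k\ge n}f_i^{(k)}(\mathrm{Ker}\,e_i\cap L)$, and the use of Lemma~3.3(b) to see that each $b\in B$ lies in the summand indexed by $k=\varepsilon_i(b)$, together give exactly the displayed identity; the dimension count then follows as you say.

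One small refinement: your phrasing ``fix a highest weight vector $u\in\mathrm{Ker}\,e_i\cap L$ with $u\bmod vL\in B$'' presupposes that an $\mathbf{A}$-basis of $\mathrm{Ker}\,e_i\cap L$ can be chosen to reduce to elements of $B$. This is true --- the set $\{b\in B:\varepsilon_i(b)=0\}$ is a $\mathbb{Q}(t)$-basis of $(\mathrm{Ker}\,e_i\cap L)/v(\mathrm{Ker}\,e_i\cap L)$ by Lemma~3.3(b), and lifts to an $\mathbf{A}$-basis since $\mathbf{A}$ is local --- but it deserves a sentence, since it is what makes your ``$b_k:=f_i^{(k)}u\bmod vL$ are exactly the elements of $B$ on the string'' literally correct rather than only correct up to a $\mathbb{Q}(t)$-change of basis. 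Alternatively, you can bypass the stringwise picture entirely and argue directly with the full direct sum $\bigoplus_k f_i^{(k)}(\mathrm{Ker}\,e_i\cap L)$, which avoids ever needing $u\bmod vL\in B$.
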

\subsection{Polarization}
Consider the anti-involution $\tau$ on $U_{v,t}(\mathfrak{g})$. By the standard arguments, we have the following proposition.
\begin{proposition}
Let $\lambda\in P_{+}.$ There exists a unique nondegenerate symmetric bilinear form $(\cdot, \cdot)$ on $V(\lambda)$ satisfying $(y_{\lambda},y_{\lambda})=1,$ and $$(k_{i}x,y)=(x, k_{i}'^{-1}y), ~~(k_{i}'x,y)=(x, k_{i}^{-1}y),~~(e_{i}x,y)=(x, v_{i}^{-1}k_{i}^{-1}f_{i}y),$$ $$(f_{i}x,y)=(x, v_{i}^{-1}k_{i}'^{-1}e_{i}y)~~\mathrm{for~}i\in I~\mathrm{and}~x,y\in V(\lambda).\eqno{(3.1)}$$
\end{proposition}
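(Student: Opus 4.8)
The plan is to construct the bilinear form explicitly on the tensor algebra and then descend to $V(\lambda)$, mimicking the standard one-parameter construction (as in Kashiwara's work). First I would exploit the realization $V(\lambda)\cong U_{v,t}^{-}(\mathfrak{g})/(\sum_i U_{v,t}^{-}(\mathfrak{g})f_i^{1+\langle h_i,\lambda\rangle})$ with highest weight vector $y_\lambda$. Since the relations $(3.1)$ prescribe how each generator moves across the form, any two values $(uy_\lambda, u'y_\lambda)$ with $u,u'\in U_{v,t}^{-}(\mathfrak{g})$ are determined: using $(f_i x, y)=(x, v_i^{-1}k_i'^{-1}e_i y)$ one pushes all the $f_i$'s from the left factor onto the right factor, converting them into $e_i$'s (up to the scalars $v_i^{-1}$ and the action of $k_i'^{-1}$, which on a weight vector is just a monomial in $v$ and $t$). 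Repeatedly applying this, together with $(e_i y_\lambda, \cdot)=0$ and the highest-weight condition $e_i y_\lambda=0$, reduces every inner product to a multiple of $(y_\lambda,y_\lambda)=1$. This proves uniqueness outright.

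For existence, I would first define a form on all of $U_{v,t}^{-}(\mathfrak{g})$ (equivalently on the Verma module $M(\lambda)$) by the recursive rule just described and check it is well defined, i.e.\ independent of the order in which the $f_i$'s are commuted past one another. The cleanest route is to relate this form to the Drinfel'd-double pairing $\langle\cdot,\cdot\rangle\colon U_{v,t}^{\geq}\times U_{v,t}^{\leq}\to\mathbb{Q}(v,t)$ of Proposition 2.1, which is already known to exist and be well defined; the anti-automorphism $\tau$ from Section 2.2 interchanges $U^{\geq}_{v,t}$ and $U^{\leq}_{v,t}$ and satisfies $\tau(f_i)=v_i^{-1}k_i'^{-1}e_i$, precisely the substitution appearing in $(3.1)$. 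Thus I would define $(x,y):=\langle \tau(\text{something}),\,\cdot\rangle$ evaluated at $y_\lambda$, and the identities $\langle x, yy'\rangle=\langle\Delta(x),y\otimes y'\rangle$ together with $\Delta\circ\tau=(\tau\otimes\tau)\circ\Delta$ guarantee that the four adjunction relations in $(3.1)$ hold. Symmetry of the form then follows from $\tau^2=1$.

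It remains to pass from $M(\lambda)$ to the irreducible quotient $V(\lambda)$ and to verify nondegeneracy. The key point is that the radical of the form on $M(\lambda)$ is a submodule (this is automatic once the adjunction relations hold, since $\tau$ is an anti-automorphism), and by the complete reducibility / highest-weight structure it must coincide with the maximal proper submodule $\sum_i U_{v,t}^{-}(\mathfrak{g})f_i^{1+\langle h_i,\lambda\rangle}\,y_\lambda$. Hence the induced form on the quotient $V(\lambda)$ is nondegenerate, and symmetry descends as well. Finally I would record that distinct weight spaces are orthogonal: if $x\in V(\lambda)_\mu$ and $y\in V(\lambda)_\nu$ with $\mu\neq\nu$, then applying $(k_i x,y)=(x,k_i'^{-1}y)$ forces $(x,y)$ to be fixed under multiplication by two different monomials in $v,t$, so it vanishes.

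The main obstacle I anticipate is the well-definedness in the existence step. The scalars $t_i$ and the nonsymmetric bracket $\langle i,j\rangle$ make the two-parameter commutation more delicate than in the one-parameter case, so I expect the real work to be checking that the form respects the quantum Serre relations (R4) and is genuinely symmetric despite the asymmetry between $k_i$ and $k_i'$. Routing everything through the already-established pairing of Proposition 2.1 and the clean identities $\tau^2=1$, $\Delta\circ\tau=(\tau\otimes\tau)\circ\Delta$ is precisely what lets me avoid a direct verification of those relations, which is why I would lean on that structure rather than an ab initio recursion.
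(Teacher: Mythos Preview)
Your proposal is correct and aligns with the paper's approach: the paper simply invokes the anti-involution $\tau$ from Section~2.2 and says ``by the standard arguments,'' which is precisely the contravariant-form construction you sketch (uniqueness by pushing $f_i$'s across, existence on the Verma module via $\tau$, descent to the irreducible quotient by identifying the radical with the maximal proper submodule). Your additional routing through the Drinfel'd pairing of Proposition~2.1 is one legitimate way to make existence rigorous, though the more common shortcut is to define $(Py_\lambda, Qy_\lambda)$ directly as the $y_\lambda$-coefficient of $\tau(P)Q\,y_\lambda$ in the Verma module and then check well-definedness from $\tau$ being an anti-automorphism; either route works and both are what ``standard arguments'' refers to.
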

For an integrable $U_{v,t}(\mathfrak{g})$-module $M,$ we call a bilinear form $(\cdot, \cdot)$ on $M$ a polarization if (3.1) is satisfied with $M$ in place of $V(\lambda).$ We can easily get the following lemma using the properties of $\tau.$
\begin{lemma}
Assume $M,N\in \mathcal{O}_{\mathrm{int}}$ admit polarizations $(\cdot, \cdot).$ Then the module $M\otimes N,$ on which the symmetric bilinear form is given by $(m_{1}\otimes n_{1},  m_{2}\otimes n_{2})=(m_1,m_2)(n_1,n_2),$ also admits a polarization.
\end{lemma}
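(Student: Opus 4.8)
The plan is to verify the four defining relations $(3.1)$ of a polarization directly for the product form on $M\otimes N$, exploiting the compatibility of $\tau$ with the coproduct. The organizing observation is that, upon reading off the values $\tau(k_i)=k_i'^{-1}$, $\tau(k_i')=k_i^{-1}$, $\tau(e_i)=v_i^{-1}k_i^{-1}f_i$ and $\tau(f_i)=v_i^{-1}k_i'^{-1}e_i$, each of the four identities in $(3.1)$ is precisely
$$(gx,y)=(x,\tau(g)y)\qquad\text{for the generator }g\in\{k_i,k_i',e_i,f_i\}.$$
Thus proving the lemma amounts to establishing this single relation, for each such $g$, on $M\otimes N$. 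Since the action on the tensor product is through the coproduct, I would write $\Delta(g)=\sum g_{(1)}\otimes g_{(2)}$ (a one-term sum for $k_i,k_i'$ and a two-term sum for $e_i,f_i$), with $g$ acting on $m_1\otimes n_1$ as $\sum g_{(1)}m_1\otimes g_{(2)}n_1$.

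The computation then proceeds in two steps. First, expanding the left-hand side and applying the polarizations of $M$ and $N$ separately (and the bilinearity of the two forms, which lets one pull out the scalar $v_i^{-1}$ appearing in $\Delta$), I get
$$\bigl(g(m_1\otimes n_1),\,m_2\otimes n_2\bigr)=\sum\,(g_{(1)}m_1,m_2)\,(g_{(2)}n_1,n_2)=\sum\,(m_1,\tau(g_{(1)})m_2)\,(n_1,\tau(g_{(2)})n_2).$$
Second, on the right-hand side $\tau(g)$ acts on $m_2\otimes n_2$ through $\Delta(\tau(g))$, and the stated identity $\Delta\circ\tau=(\tau\otimes\tau)\circ\Delta$ gives $\Delta(\tau(g))=\sum\tau(g_{(1)})\otimes\tau(g_{(2)})$, so that
$$\bigl(m_1\otimes n_1,\,\tau(g)(m_2\otimes n_2)\bigr)=\sum\,(m_1,\tau(g_{(1)})m_2)\,(n_1,\tau(g_{(2)})n_2).$$
Comparing the two displays yields $(g(m_1\otimes n_1),m_2\otimes n_2)=(m_1\otimes n_1,\tau(g)(m_2\otimes n_2))$, which is exactly $(3.1)$ for $M\otimes N$. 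Symmetry of the product form is immediate from the symmetry of the two factors, so $(\cdot,\cdot)$ on $M\otimes N$ is a polarization.

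The manipulation is essentially bookkeeping, so the one point I would flag as the crux is the use of $\Delta\circ\tau=(\tau\otimes\tau)\circ\Delta$ \emph{without a flip}: because $\tau$ is an anti-automorphism one might \emph{a priori} expect $\Delta^{\mathrm{op}}$ to intervene, in which case the two tensor legs in the last display would be interchanged and the matching with the first display would fail. It is precisely the (already verified) flip-free identity that lines up the Sweedler legs of $\Delta(g)$ on the left with those of $\Delta(\tau(g))$ on the right. To be safe I would run the generator $g=f_i$ explicitly, where $\Delta(f_i)=f_i\otimes 1+k_i\otimes f_i$ and $\Delta(\tau(f_i))=v_i^{-1}\bigl(k_i'^{-1}e_i\otimes 1+k_i'^{-1}\otimes k_i'^{-1}e_i\bigr)$, checking that the $v$- and $t$-eigenvalue factors carried by $k_i$ and $k_i'$ in the coproduct cancel correctly against the $k_i'^{-1}$'s produced by $\tau$; the remaining generators $e_i,k_i,k_i'$ are then entirely analogous. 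Restricting attention to generators also sidesteps any concern about $\tau$ inverting the scalar $t$, since none of the $\tau$-images of the generators, nor their coproducts, involve a bare $t$.
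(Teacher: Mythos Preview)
Your proof is correct and is exactly the argument the paper has in mind: the text simply says the lemma follows ``using the properties of $\tau$,'' and the relevant property is precisely the flip-free compatibility $\Delta\circ\tau=(\tau\otimes\tau)\circ\Delta$ that you identify as the crux. Your Sweedler-notation computation faithfully unpacks this one-line hint (the parenthetical about a $v_i^{-1}$ appearing in $\Delta$ is a minor slip---the coproducts of the generators here carry no such scalar---but it does not affect the argument).
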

For $\lambda,\mu\in P_{+},$ there exist unique $U_{v,t}(\mathfrak{g})$-module homomorphisms $$\Phi_{\lambda, \mu}: V(\lambda+\mu)\rightarrow V(\lambda)\otimes V(\mu),~~~~ \Psi_{\lambda, \mu}: V(\lambda)\otimes V(\mu)\rightarrow V(\lambda+\mu)$$ satisfying $$\Phi_{\lambda, \mu}(y_{\lambda+\mu})=y_{\lambda}\otimes y_{\mu},~~~~ \Psi_{\lambda, \mu}(y_{\lambda}\otimes y_{\mu})=y_{\lambda+\mu}.$$ It is clear that $\Psi_{\lambda, \mu}\circ \Phi_{\lambda, \mu}=\mathrm{id}_{V(\lambda+\mu)}$ and they commute with $\tilde{e}_{i}$ and $\tilde{f}_{i}.$ Moreover, we have $$(\Psi_{\lambda, \mu}(x), y)=(x, \Phi_{\lambda, \mu}(y)) ~~~\mathrm{for}~ x\in V(\lambda)\otimes V(\mu) ~\mathrm{and}~ y\in V(\lambda+\mu).$$
This follows easily from the uniqueness of a bilinear form $(\cdot, \cdot)$ on $(V(\lambda)\otimes V(\mu))\times V(\lambda+\mu)$ satisfying (3.1) and $(y_{\lambda}\otimes y_{\mu}, y_{\lambda+\mu})=1.$

\section{Crystal bases of $U_{v,t}^{-}(\mathfrak{g})$}
In this section, we shall define the notion of crystal basis for $U_{v,t}^{-}(\mathfrak{g})$ following [Kas2].
\subsection{Two-parameter Kashiwara algebras}
Let $\mathcal{B}_{v,t}(\mathfrak{g})$ be the algebra over $\mathbb{Q}(v,t)$ generated by the elements $e_{i}', f_{i}$ $(i\in I)$ with the following relations:
$$e_{i}'f_{j}=v^{-i\cdot j}t^{\langle i, j\rangle-\langle j, i\rangle}f_{j}e_{i}'+\delta_{ij};\eqno{(4.1)}$$
$$\sum\limits_{p+p'=1-\frac{2i\cdot j}{i\cdot i}}(-1)^{p}t_{i}^{-p\big(p'-2\frac{\langle i, j\rangle}{i\cdot i}+2\frac{\langle j, i\rangle}{i\cdot i}\big)}X_{i}^{[p]}X_{j}X_{i}^{[p']}=0~~for~ i\neq j,~X_{i}=e_{i}', f_{i}.\eqno{(4.2)}$$
We call $\mathcal{B}_{v,t}(\mathfrak{g})$ the two-parameter Kashiwara algebra. $\mathcal{B}_{v,t}(\mathfrak{g})$ has an antiautomorphism $\rho$ defined by $\rho(f_{i})=e_{i}',$ $\rho(e_{i}')=f_{i}$ ($i\in I$) and $\rho(v)=v,$ $\rho(t)=t^{-1}.$

\begin{lemma}
For $i\in I,$ there exist unique $e_{i}'$ and $e_{i}''$ in $\mathrm{End}(U_{v,t}^{-}(\mathfrak{g}))$ such that for any $y\in U_{v,t}^{-}(\mathfrak{g}),$ we have $$[e_{i}, y]=\frac{k_{i}e_{i}''(y)-k_{i}'e_{i}'(y)}{v_{i}-v_{i}^{-1}}.$$
\end{lemma}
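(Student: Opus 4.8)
The plan is to construct $e_{i}''$ and $e_{i}'$ by extracting coefficients from the commutator $[e_{i},y]$, arguing existence and uniqueness separately, with the whole argument resting on the triangular decomposition of Corollary 2.2. Throughout write $U^{-},U^{0},U^{+}$ for $U_{v,t}^{-}(\mathfrak{g}),U_{v,t}^{0}(\mathfrak{g}),U_{v,t}^{+}(\mathfrak{g})$, and for a weight vector $y\in U^{-}$ of weight $-\mu$ record from (R2) the commutation scalars $k_{i}y=v^{-i\cdot\mu}t^{\langle\mu,i\rangle-\langle i,\mu\rangle}\,y\,k_{i}$ and $k_{i}'y=v^{i\cdot\mu}t^{\langle\mu,i\rangle-\langle i,\mu\rangle}\,y\,k_{i}'$, obtained by extending the relations for $f_{j}$ multiplicatively via the bilinearity of $\langle\cdot,\cdot\rangle$.

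For uniqueness I would first show that $[e_{i},y]$ lies in the subspace $U^{-}k_{i}\oplus U^{-}k_{i}'$ of $U_{v,t}(\mathfrak{g})$. Granting this, Corollary 2.2 together with the linear independence of $k_{i}$ and $k_{i}'$ inside $U^{0}$ forces the two coefficients to be uniquely determined elements $a_{i}(y),b_{i}(y)\in U^{-}$ with $[e_{i},y]=a_{i}(y)k_{i}+b_{i}(y)k_{i}'$; setting $e_{i}''(y)=(v_{i}-v_{i}^{-1})a_{i}(y)$ and $e_{i}'(y)=-(v_{i}-v_{i}^{-1})b_{i}(y)$ then reproduces the displayed formula and pins the operators down uniquely. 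Their $\mathbb{Q}(v,t)$-linearity is immediate, since both the commutator and the extraction of coefficients are linear.

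For existence I would prove the containment $[e_{i},y]\in U^{-}k_{i}\oplus U^{-}k_{i}'$ by induction on $\mathrm{ht}(\mu)$ for $y$ of weight $-\mu$, using that every element of $U^{-}$ is a sum of monomials $f_{j}y'$. The base case is (R3): $[e_{i},f_{j}]=\delta_{ij}(k_{i}-k_{i}')/(v_{i}-v_{i}^{-1})$. For the inductive step the derivation identity $[e_{i},f_{j}y']=[e_{i},f_{j}]y'+f_{j}[e_{i},y']$ reduces the first summand, after commuting $k_{i},k_{i}'$ past $y'$ by (R2), to an element of $U^{-}k_{i}\oplus U^{-}k_{i}'$, while the second summand lands there by the induction hypothesis multiplied on the left by $f_{j}\in U^{-}$. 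Tracking the scalars through this recursion yields the twisted (skew-)Leibniz rules $e_{i}''(y_{1}y_{2})=e_{i}''(y_{1})y_{2}+v^{i\cdot\mu_{1}}t^{\langle i,\mu_{1}\rangle-\langle\mu_{1},i\rangle}y_{1}e_{i}''(y_{2})$ and $e_{i}'(y_{1}y_{2})=e_{i}'(y_{1})y_{2}+v^{-i\cdot\mu_{1}}t^{\langle i,\mu_{1}\rangle-\langle\mu_{1},i\rangle}y_{1}e_{i}'(y_{2})$ for $y_{1}$ of weight $-\mu_{1}$, which is precisely the form needed in Section 4.2.

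The main obstacle, and the only place the two-parameter structure genuinely intervenes, is verifying that the $U^{0}$-support of $[e_{i},y]$ never escapes $\{k_{i},k_{i}'\}$ and that the extracted coefficients really lie in $U^{-}$ rather than in some larger subspace. Both follow from the fact that in (R2) each $f_{j}$ commutes past $k_{i}$ or $k_{i}'$ only up to the scalar $v^{\mp i\cdot j}t^{\langle j,i\rangle-\langle i,j\rangle}$, never converting $k_{i}$ into a different element of $U^{0}$; one must simply keep the bookkeeping of these $(v,t)$-scalars consistent, which is exactly where the argument diverges from the one-parameter computation in [Kas2]. Once the containment is secured, well-definedness of $e_{i}',e_{i}''$ as honest endomorphisms of $U^{-}$ is automatic, because the coefficients are being extracted inside the genuine algebra $U_{v,t}(\mathfrak{g})$, so no separate verification against the Serre relations (R4) is needed.
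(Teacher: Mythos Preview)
Your approach is essentially the same as the paper's: uniqueness from the triangular decomposition (Corollary 2.2), existence by induction via the derivation identity $[e_{i},f_{j}y]=[e_{i},f_{j}]y+f_{j}[e_{i},y]$ combined with (R2) and (R3). The paper's inductive step computes exactly the formula you describe and reads off $e_{i}'(f_{j}y)=v^{-i\cdot j}t^{\langle i,j\rangle-\langle j,i\rangle}f_{j}e_{i}'(y)+\delta_{ij}y$ and the analogous $e_{i}''$ formula, which is your twisted Leibniz rule specialized to $y_{1}=f_{j}$.

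One small bookkeeping slip: the statement places $k_{i},k_{i}'$ on the \emph{left} of $e_{i}''(y),e_{i}'(y)$, while you decompose $[e_{i},y]=a_{i}(y)k_{i}+b_{i}(y)k_{i}'$ with $k_{i},k_{i}'$ on the right. Since $k_{i}$ commutes past a $U^{-}$-weight vector only up to a nontrivial $(v,t)$-scalar, your identification $e_{i}''(y)=(v_{i}-v_{i}^{-1})a_{i}(y)$ is off by exactly that scalar. This is harmless for existence and uniqueness, but would throw off the explicit Leibniz coefficients if left uncorrected; the cleanest fix is to work directly with the decomposition $[e_{i},y]\in k_{i}U^{-}\oplus k_{i}'U^{-}$ instead.
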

\begin{proof}
The uniqueness follows from the triangular decomposition in Corollary 2.2. Since $U_{v,t}^{-}(\mathfrak{g})$ is generated by the $f_{j}$ and the lemma is true for $y=1,$ it is enough to show that the lemma is true for $f_{j}y$ if it is true for $y.$ We have
$$[e_{i}, f_{j}y]=[e_{i}, f_{j}]y+f_{j}[e_{i}, y]=\delta_{ij}\frac{k_{i}-k_{i}'}{v_{i}-v_{i}^{-1}}y+f_{j}\frac{k_{i}e_{i}''(y)-k_{i}'e_{i}'(y)}{v_{i}-v_{i}^{-1}}.$$
It follows from the definition that we have $$[e_{i}, f_{j}y]=\frac{k_{i}(v^{i\cdot j}t^{\langle i, j\rangle-\langle j, i\rangle} f_{j}e_{i}''(y)+\delta_{ij}y)-k_{i}'(v^{-i\cdot j}t^{\langle i, j\rangle-\langle j, i\rangle}f_{j}e_{i}'(y)+\delta_{ij}y)}{v_{i}-v_{i}^{-1}}.$$
\end{proof}
\begin{proposition}
For any $i,j\in I,$ we have $e_{i}'e_{j}''=v^{i\cdot j}t^{-\langle i, j\rangle+\langle j, i\rangle}e_{j}''e_{i}'$ in $\mathrm{End}(U_{v,t}^{-}(\mathfrak{g})).$
\end{proposition}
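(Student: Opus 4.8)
The plan is to verify the identity by the same kind of induction used in the proof of Lemma 4.1: since $U_{v,t}^{-}(\mathfrak{g})$ is generated by the $f_{k}$, it suffices to check the relation on $y=1$ and to show that, if it holds for $y$, then it holds for $f_{k}y$. The only inputs needed are the two twisted-Leibniz recursions for $e_{i}'$ and $e_{i}''$ that are produced inside the proof of Lemma 4.1.

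First I would record those recursions, namely
$$e_{i}''(f_{j}y)=v^{i\cdot j}t^{\langle i,j\rangle-\langle j,i\rangle}f_{j}\,e_{i}''(y)+\delta_{ij}\,y,\qquad e_{i}'(f_{j}y)=v^{-i\cdot j}t^{\langle i,j\rangle-\langle j,i\rangle}f_{j}\,e_{i}'(y)+\delta_{ij}\,y.$$
For the base case, note that $e_{i}'$ and $e_{i}''$ raise the $Q$-weight by $\alpha_{i}$, so $e_{i}'(1)$ and $e_{i}''(1)$ would lie in $U_{v,t}^{-}(\mathfrak{g})_{\alpha_{i}}=0$; hence both $e_{i}'e_{j}''$ and $e_{j}''e_{i}'$ annihilate $1$ and the identity holds there.

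For the inductive step I would apply the composites $e_{i}'e_{j}''$ and $e_{j}''e_{i}'$ to $f_{k}y$ using the recursions above. Each composite breaks into a bulk term, in which $f_{k}$ is pulled to the front and both operators act on $y$, together with two boundary terms coming from the Kronecker deltas $\delta_{jk}$ and $\delta_{ik}$. The bulk terms carry accumulated $v$- and $t$-exponents that are related on the two sides by exactly the factor $v^{i\cdot j}t^{-\langle i,j\rangle+\langle j,i\rangle}$, so they are matched precisely by invoking the inductive hypothesis $e_{i}'e_{j}''(y)=v^{i\cdot j}t^{-\langle i,j\rangle+\langle j,i\rangle}e_{j}''e_{i}'(y)$.

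The crux is then the matching of the boundary terms, which is a pure exponent computation. After multiplying the $e_{j}''e_{i}'$ side by $v^{i\cdot j}t^{-\langle i,j\rangle+\langle j,i\rangle}$, the $\delta_{ik}$-contributions agree upon setting $k=i$, since $v^{j\cdot k}t^{\langle j,k\rangle-\langle k,j\rangle}=v^{i\cdot j}t^{-\langle i,j\rangle+\langle j,i\rangle}$ by the symmetry $i\cdot j=j\cdot i$ of the form and the antisymmetry of $\langle i,j\rangle-\langle j,i\rangle$; and the $\delta_{jk}$-contributions agree upon setting $k=j$, since the scalar $v^{i\cdot j-i\cdot k}t^{-\langle i,j\rangle+\langle j,i\rangle+\langle i,k\rangle-\langle k,i\rangle}$ then collapses to $1$. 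I expect this exponent bookkeeping to be the only delicate point: one must keep careful track of which $t$-exponents are symmetric and which are antisymmetric in the pair $i,j$, since it is exactly the antisymmetric combinations that assemble into the commutation scalar $v^{i\cdot j}t^{-\langle i,j\rangle+\langle j,i\rangle}$.
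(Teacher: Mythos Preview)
Your proposal is correct and follows essentially the same route as the paper: both arguments use the twisted-Leibniz recursions for $e_i'$ and $e_j''$ from Lemma~4.1, verify the base case on $1$, and reduce the general case to an exponent comparison of the $\delta_{ik}$- and $\delta_{jk}$-boundary terms. The only cosmetic difference is that the paper packages this as showing that $S:=e_i'e_j''-v^{i\cdot j}t^{\langle j,i\rangle-\langle i,j\rangle}e_j''e_i'$ satisfies the operator identity $Sf_k=v^{j\cdot k-i\cdot k}t^{\langle i+j,k\rangle-\langle k,i+j\rangle}f_kS$ and then invokes $S\cdot 1=0$, whereas you phrase the same computation as an explicit induction on $y$; the underlying exponent bookkeeping is identical.
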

\begin{proof}
For $k\in I,$ we have
\begin{align*}
e_{i}'e_{j}''f_{k}&=e_{i}'(v^{j\cdot k}t^{\langle j, k\rangle-\langle k, j\rangle}f_{k}e_{j}''+\delta_{jk})\\&=v^{j\cdot k}t^{\langle j, k\rangle-\langle k, j\rangle}(v^{-i\cdot k}t^{\langle i, k\rangle-\langle k, i\rangle}f_{k}e_{i}'+\delta_{ik})e_{j}''+\delta_{jk}e_{i}'\\&=v^{j\cdot k-i\cdot k}t^{\langle i+j, k\rangle-\langle k, i+j\rangle}f_{k}e_{i}'e_{j}''+v^{j\cdot i}t^{\langle j, i\rangle-\langle i, j\rangle}\delta_{ik}e_{j}''+\delta_{jk}e_{i}'.
\end{align*}
Similarly, we have
\begin{align*}
e_{j}''e_{i}'f_{k}&=e_{j}''(v^{-i\cdot k}t^{\langle i, k\rangle-\langle k, i\rangle}f_{k}e_{i}'+\delta_{ik})\\&=v^{-i\cdot k}t^{\langle i, k\rangle-\langle k, i\rangle}(v^{j\cdot k}t^{\langle j, k\rangle-\langle k, j\rangle}f_{k}e_{j}''+\delta_{jk})e_{i}'+\delta_{ik}e_{j}''\\&=v^{j\cdot k-i\cdot k}t^{\langle i+j, k\rangle-\langle k, i+j\rangle}f_{k}e_{j}''e_{i}'+v^{-i\cdot j}t^{\langle i, j\rangle-\langle j, i\rangle}\delta_{jk}e_{i}'+\delta_{ik}e_{j}''.
\end{align*}
Hence, if we set $S=e_{i}'e_{j}''-v^{i\cdot j}t^{\langle j, i\rangle-\langle i, j\rangle}e_{j}''e_{i}',$ then $Sf_{k}=v^{j\cdot k-i\cdot k}t^{\langle i+j, k\rangle-\langle k, i+j\rangle}f_{k}S.$ Then $S\cdot 1=0$ gives $S=0.$
\end{proof}

For any $\xi=\sum_{i}n_{i}\alpha_{i}\in Q_{-},$ we write $|\xi|=\sum_{i}|n_{i}|,$ and we set $$U_{v,t}^{-}(\mathfrak{g})_{\xi}=\{y\in U_{v,t}^{-}(\mathfrak{g}); k_{i}yk_{i}^{-1}=v^{i\cdot \xi}t^{-\langle \xi, i\rangle+\langle i, \xi\rangle}y,~k_{i}'yk_{i}'^{-1}=v^{-i\cdot \xi}t^{-\langle \xi, i\rangle+\langle i, \xi\rangle}y,~\forall i\in I\}.$$
\begin{corollary}{\rm (See also [FL, Lemma 9].)}
Let $y\in U_{v,t}^{-}(\mathfrak{g}).$ If $e_{i}'(y)=0$ for all $i\in I,$ then $y$ is a constant multiple of $1.$
\end{corollary}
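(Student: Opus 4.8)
The plan is to reduce to a homogeneous element and then play the vanishing of the operators $e_i'$ against the nondegeneracy of the pairing of Proposition 2.1. First I would record that each $e_i'$ respects the weight grading $U_{v,t}^{-}(\mathfrak{g})=\bigoplus_{\xi\in Q_-}U_{v,t}^{-}(\mathfrak{g})_{\xi}$, raising weight by $\alpha_i$, that is $e_i'\big(U_{v,t}^{-}(\mathfrak{g})_{\xi}\big)\subseteq U_{v,t}^{-}(\mathfrak{g})_{\xi+\alpha_i}$; this is immediate from the twisted Leibniz rule $e_i'(f_jy)=v^{-i\cdot j}t^{\langle i,j\rangle-\langle j,i\rangle}f_je_i'(y)+\delta_{ij}y$ read off from the proof of Lemma 4.2, together with $e_i'(1)=0$. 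Writing $y=\sum_{\xi}y_{\xi}$ in homogeneous components, the hypothesis $e_i'(y)=0$ for all $i$ is equivalent to $e_i'(y_{\xi})=0$ for all $i$ and all $\xi$, so I may assume $y$ is homogeneous of weight $\xi$. If $\xi=0$ then $y\in U_{v,t}^{-}(\mathfrak{g})_{0}=\mathbb{Q}(v,t)\cdot 1$ and we are done, so it remains to rule out $y\neq 0$ when $\xi\neq 0$.

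The core step is an adjunction identifying $e_i'$ with left multiplication by $e_i$ under the pairing $\langle\cdot,\cdot\rangle$ of Proposition 2.1: for every $x\in U_{v,t}^{+}(\mathfrak{g})$ and $y\in U_{v,t}^{-}(\mathfrak{g})$,
$$\langle e_ix,\,y\rangle=\tfrac{1}{v_i^{-1}-v_i}\,\langle x,\,e_i'(y)\rangle$$
up to a nonzero weight-dependent scalar that I would track explicitly. To obtain this I would start from $\langle e_ix,y\rangle=\langle x\otimes e_i,\Delta(y)\rangle$, expand $\Delta(y)$, and note that $\langle e_i,\cdot\rangle$ is supported on the weight space $U_{v,t}^{-}(\mathfrak{g})_{-\alpha_i}=\mathbb{Q}(v,t)f_i$ with $\langle e_i,f_i\rangle=1/(v_i^{-1}-v_i)$. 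The operator extracting the coefficient of $f_i$ in the second tensor factor of $\Delta(y)$ then has to be shown to be exactly the $e_i'$ of Lemma 4.2: one verifies it obeys the same twisted Leibniz rule (using $\Delta(e_i)=e_i\otimes k_i'+1\otimes e_i$ and $\Delta(f_i)=f_i\otimes 1+k_i\otimes f_i$) and agrees with $e_i'$ on $y=1$, hence everywhere since $U_{v,t}^{-}(\mathfrak{g})$ is generated by the $f_i$.

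Granting the adjunction, $e_i'(y)=0$ for all $i$ forces $\langle e_ix,y\rangle=0$ for all $i$ and all $x\in U_{v,t}^{+}(\mathfrak{g})$. Since $U_{v,t}^{+}(\mathfrak{g})$ is generated by the $e_i$, the span $\sum_i e_iU_{v,t}^{+}(\mathfrak{g})$ is the augmentation ideal $\bigoplus_{\eta\neq 0}U_{v,t}^{+}(\mathfrak{g})_{\eta}$, which for $\xi\neq 0$ contains $U_{v,t}^{+}(\mathfrak{g})_{-\xi}$. Hence $\langle z,y\rangle=0$ for all $z\in U_{v,t}^{+}(\mathfrak{g})_{-\xi}$, and since the restriction of $\langle\cdot,\cdot\rangle$ to $U_{v,t}^{+}(\mathfrak{g})_{-\xi}\times U_{v,t}^{-}(\mathfrak{g})_{\xi}$ is nondegenerate (a standard property of the Drinfel'd double pairing of Corollary 2.2), we conclude $y=0$, completing the induction.

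The main obstacle is the second step: pinning down the adjunction, in particular correctly tracking the twist coming from $k_i'$ in $\Delta(e_i)$ and confirming that the coproduct-derivation extracting the $f_i$-part coincides with $e_i'$ rather than with $e_i''$ or a differently normalized scalar. I should also confirm the nondegeneracy of $\langle\cdot,\cdot\rangle$ on each weight block, which for these $(v,t)$-quantum groups follows from the quantum Serre relations being exactly the radical of the form. Alternatively, once the polarization on $U_{v,t}^{-}(\mathfrak{g})$ introduced later in this section is available, the same argument can be phrased directly with $e_i'$ adjoint to left multiplication by $f_i$ on $U_{v,t}^{-}(\mathfrak{g})$, giving $y\perp\sum_i f_iU_{v,t}^{-}(\mathfrak{g})=\bigoplus_{\xi\neq 0}U_{v,t}^{-}(\mathfrak{g})_{\xi}$ and bypassing $U_{v,t}^{+}(\mathfrak{g})$ altogether.
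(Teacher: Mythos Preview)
Your approach is genuinely different from the paper's. The paper argues by induction on $|\xi|$ but does not touch the Hopf pairing: for $|\xi|>1$ it uses Proposition~4.2 (the commutation $e_i'e_j''=v^{i\cdot j}t^{\langle j,i\rangle-\langle i,j\rangle}e_j''e_i'$) to deduce $e_j''(y)=0$ from $e_i'(y)=0$, hence $[e_j,y]=0$ for all $j$; then it picks $\lambda\gg 0$ so that $U_{v,t}^{-}(\mathfrak{g})_\xi\xrightarrow{\sim}V(\lambda)_{\lambda+\xi}$, observes $e_j(yu_\lambda)=0$ for all $j$, and concludes $yu_\lambda=0$ by irreducibility of $V(\lambda)$. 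So the paper trades bilinear-form nondegeneracy for the commutation relation plus elementary highest-weight theory, while you trade in the opposite direction.

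Your main route via the Drinfel'd pairing of Proposition~2.1 is the standard Lusztig-style argument and can be made to work, but you should be aware that its key input---nondegeneracy of $\langle\cdot,\cdot\rangle$ on each graded piece $U_{v,t}^{+}(\mathfrak{g})_{-\xi}\times U_{v,t}^{-}(\mathfrak{g})_\xi$---is not proved in this paper; it is imported from [HP1]/[BGH1] through Corollary~2.2. If you accept that, your argument is fine, and it has the virtue of avoiding representation theory entirely. The paper's argument, by contrast, is self-contained relative to the structure developed in Section~4 and the basic facts about $V(\lambda)$.

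Your fallback, however, is circular as stated: the polarization $(\cdot,\cdot)$ on $U_{v,t}^{-}(\mathfrak{g})$ is constructed in Proposition~4.8 \emph{after} Corollary~4.3, and its nondegeneracy (Corollary~4.9) is proved precisely by invoking Corollary~4.3. So you cannot use ``$e_i'$ adjoint to $f_i$ plus nondegeneracy of the polarization'' to prove Corollary~4.3 without reorganizing the paper's logic. Also, in pinning down the adjunction for the Drinfel'd pairing, note that with the coproduct $\Delta(f_i)=f_i\otimes 1+k_i\otimes f_i$ the second tensor factor of $\Delta(y)$ picks up $k$-type factors rather than $k'$-type ones, so you must check carefully which of $e_i'$, $e_i''$ (up to a power of $k_i$) actually appears; the paper's conventions make this a genuine bookkeeping point, as you anticipated.
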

\begin{proof}
We may assume $y\in U_{v,t}^{-}(\mathfrak{g})_{\xi}$ for some $\xi\in Q_{-}.$ We shall prove it by the induction on $|\xi|.$ We may assume $\xi\neq 0.$

$(a)$ If $|\xi|=1.$ In this case, $y=cf_{i}$ for some $i$ and $c\in \mathbb{Q}(v,t).$ Therefore $c=e_{i}'y=0.$

$(b)$ If $|\xi|>1.$ For any $j\in I,$ we have $e_{i}'e_{j}''(y)=v^{i\cdot j}t^{-\langle i, j\rangle+\langle j, i\rangle}e_{j}''e_{i}'(y)=0$. Hence $e_{j}''(y)=0$ by the hypothesis of induction. Hence, we have $e_{i}y=ye_{j}$ for any $j\in I.$ Now let $\lambda\in P_{+}$ satisfy $\langle h_{i}, \lambda\rangle\gg 0$ so that $U_{v,t}^{-}(\mathfrak{g})_{\xi}\xrightarrow{\sim}V(\lambda)_{\lambda+\xi}$ by the homomorphism $x\mapsto xu_{\lambda}.$ Since $V(\lambda)$ is irreducible and $U_{v,t}(\mathfrak{g})yu_{\lambda}$ does not contain $u_{\lambda},$ then $e_{j}(yu_{\lambda})=0$ for any $j\in I$ implies that $yu_{\lambda}=0,$ and hence $y=0.$
\end{proof}
\begin{corollary}
Let $i\in I$ and let $y\in U_{v,t}^{-}(\mathfrak{g})_{\xi},$ which satisfies $e_{i}'(y)=0.$ Then for any $U_{v,t}(\mathfrak{g})$-module $M$ and $m\in M_{\lambda}$ such that $e_{i}m=0,$ we have $$k_{i}'^{-n}e_{i}^{n}ym=\frac{v_{i}^{n(2\langle h_{i}, \lambda+\xi\rangle+3n+1)}}{(v_{i}-v_{i}^{-1})^{n}}(e_{i}''^{n}y)m.$$
\end{corollary}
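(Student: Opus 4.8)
The plan is to prove the identity by induction on $n$, the case $n=0$ being the tautology $ym=ym$. The induction will rest on two observations. First, I would apply the commutator formula of Lemma 4.1: since $e_i'(y)=0$ and $e_im=0$, acting once with $e_i$ gives
$$e_i(ym)=[e_i,y]\,m+y\,e_im=\frac{k_ie_i''(y)}{v_i-v_i^{-1}}\,m.$$
Writing $z:=e_i''(y)\in U_{v,t}^{-}(\mathfrak{g})_{\xi+\alpha_i}$, the vector $zm$ lies in the weight space $M_{\lambda+\xi+\alpha_i}$, so $k_i$ acts on it by a scalar $\kappa=v^{i\cdot(\lambda+\xi+\alpha_i)}t^{\langle i,\lambda+\xi+\alpha_i\rangle-\langle\lambda+\xi+\alpha_i,i\rangle}$, and hence $e_i(ym)=\frac{\kappa}{v_i-v_i^{-1}}zm$. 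Second, I would check that $z$ inherits the hypothesis $e_i'(z)=0$: specializing Proposition 4.2 to $i=j$ yields $e_i'e_i''=v_i^{2}e_i''e_i'$, so $e_i'(z)=v_i^{2}e_i''(e_i'(y))=0$. Thus the inductive statement applies verbatim to the pair $(z,m)$, with $\xi$ replaced by $\xi+\alpha_i$.

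With these in hand, the inductive step would peel off a single $e_i$ and then invoke the hypothesis for $z$:
$$k_i'^{-(n+1)}e_i^{n+1}(ym)=\frac{\kappa}{v_i-v_i^{-1}}k_i'^{-1}\big(k_i'^{-n}e_i^{n}(zm)\big)=\frac{\kappa C}{v_i-v_i^{-1}}k_i'^{-1}\big(e_i''^{n+1}(y)m\big),$$
where $C$ is the coefficient furnished by the induction hypothesis for $z$ and $e_i''^{n}(z)=e_i''^{n+1}(y)$. Since $e_i''^{n+1}(y)m$ again lies in a single weight space, the last $k_i'^{-1}$ contributes one further scalar, and the whole problem reduces to verifying that the product of $\kappa$, $C$, this extra $k_i'^{-1}$-scalar and $\frac{1}{v_i-v_i^{-1}}$ is exactly the coefficient claimed for $n+1$.

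The remaining step is a matter of collecting exponents, and I expect it to be the only real obstacle. To organize it I would use $v^{i\cdot\mu}=v_i^{\langle h_i,\mu\rangle}$ (which follows from $i\cdot\mu=\frac{i\cdot i}{2}\langle h_i,\mu\rangle$) together with $\langle h_i,\alpha_i\rangle=2$ to rewrite every $v$-exponent through $a:=\langle h_i,\lambda+\xi\rangle$ and $n$; a direct count then gives total $v_i$-power $(n+1)(2a+3(n+1)+1)$ and combined denominator $(v_i-v_i^{-1})^{n+1}$, as required. The genuinely two-parameter point, and the step most likely to go wrong if one is careless, will be the cancellation of the $t$-contributions: the $t$-exponents of $\kappa$ and of the final $k_i'^{-1}$-scalar add up to $-n(\langle i,\alpha_i\rangle-\langle\alpha_i,i\rangle)$, which vanishes because $\langle i,\alpha_i\rangle=\Lambda_{ii}=\langle\alpha_i,i\rangle$. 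This independence of $t$ is exactly what the stated formula requires, and it is the precise place where the argument departs from the purely one-parameter computation in [Kas2].
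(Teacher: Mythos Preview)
Your argument is correct and follows essentially the same inductive strategy as the paper: both proofs are inductions on $n$ that use Lemma~4.1 to replace one factor of $e_i$ by a commutator, invoke Proposition~4.2 to kill the $e_i'$-term, and then reduce the scalar bookkeeping to tracking $v_i$- and $t$-exponents on a weight vector. The only organizational difference is that you peel off the \emph{innermost} $e_i$ (so the induction hypothesis is applied to $z=e_i''(y)$ with $\xi$ shifted to $\xi+\alpha_i$), whereas the paper peels off the \emph{outermost} $e_i$ (applying the induction hypothesis to the same $y$ and then hitting $(e_i''^{\,n}y)m$ once more); this is a harmless reordering and leads to the same exponent count.
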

\begin{proof}
We shall prove it by the induction on $n.$ We have
\begin{align*}
k_{i}'^{-n-1}e_{i}^{n+1}ym&=k_{i}'^{-1}k_{i}'^{-n}e_{i}e_{i}^{n}ym\\
&=v^{n i\cdot i}k_{i}'^{-1}e_{i}k_{i}'^{-n}e_{i}^{n}ym\\
&=v^{2ni\cdot(\lambda+\xi)}v_{i}^{n(3n+1)}(v_{i}-v_{i}^{-1})^{-n}v^{n i\cdot i}k_{i}'^{-1}e_{i}(e_{i}''^{n}y)m.
\end{align*}
Since $$k_{i}'^{-1}e_{i}(e_{i}''^{n}y)m=k_{i}'^{-1}[e_{i}, e_{i}''^{n}y]m=\frac{k_{i}'^{-1}k_{i}e_{i}''^{n+1}y-k_{i}'^{-1}k_{i}'e_{i}'e_{i}''^{n}y}{v_{i}-v_{i}^{-1}}m.$$
By Proposition 4.2 we have $e_{i}'e_{i}''^{n}(y)=0.$ Hence we obtain
\begin{align*}
k_{i}'^{-n-1}e_{i}^{n+1}ym&=v^{2ni\cdot(\lambda+\xi)}v_{i}^{n(3n+3)}(v_{i}-v_{i}^{-1})^{-n-1}k_{i}'^{-1}k_{i}(e_{i}''^{n+1}y)m\\
&=v^{2ni\cdot(\lambda+\xi)}v_{i}^{n(3n+3)}(v_{i}-v_{i}^{-1})^{-n-1}v^{2(n+1)i\cdot i}v^{2i\cdot \xi}v^{2i\cdot \lambda}(e_{i}''^{n+1}y)m\\
&=v_{i}^{(n+1)(2\langle h_{i}, \lambda+\xi\rangle+3n+4)}(v_{i}-v_{i}^{-1})^{-n-1}(e_{i}''^{n+1}y)m.
\end{align*}
\end{proof}

Let $f_{j}$ ($j\in I$) acts on $U_{v,t}^{-}(\mathfrak{g})$ by the left multiplication. Our interest in $\mathcal{B}_{v,t}(\mathfrak{g})$ comes from the following result.
\begin{lemma}
$U_{v,t}^{-}(\mathfrak{g})$ is a $\mathcal{B}_{v,t}(\mathfrak{g})$-module.
\end{lemma}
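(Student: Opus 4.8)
The plan is to exhibit explicit operators realizing the two families of generators of $\mathcal{B}_{v,t}(\mathfrak{g})$ and to verify the defining relations (4.1) and (4.2). I let $f_{j}$ act on $U_{v,t}^{-}(\mathfrak{g})$ by left multiplication and let $e_{i}'$ act by the endomorphism produced in Lemma 4.1. Both are well-defined elements of $\mathrm{End}(U_{v,t}^{-}(\mathfrak{g}))$, so it remains to check that the pair $(e_{i}', f_{j})$ satisfies (4.1) and that each of $X_{i}=e_{i}'$ and $X_{i}=f_{i}$ satisfies the Serre relation (4.2). Relation (4.1) requires no new work: in the proof of Lemma 4.1 the commutator $[e_{i}, f_{j}y]$ was computed in two ways, and comparing it with the defining expression $\tfrac{k_{i}e_{i}''(f_{j}y)-k_{i}'e_{i}'(f_{j}y)}{v_{i}-v_{i}^{-1}}$, the linear independence of the $k_{i}, k_{i}'$ furnished by the triangular decomposition (Corollary 2.2) lets me match the coefficient of $k_{i}'$, which gives exactly $e_{i}'(f_{j}y)=v^{-i\cdot j}t^{\langle i,j\rangle-\langle j,i\rangle}f_{j}e_{i}'(y)+\delta_{ij}y$, i.e. relation (4.1) read as an identity of operators.

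The Serre relation (4.2) for $X_{i}=f_{i}$ is immediate: the Serre combination is already $0$ in $U_{v,t}^{-}(\mathfrak{g})$ by relation (R4), and left multiplication by $0$ is the zero operator. The substantive point is the Serre relation for $X_{i}=e_{i}'$. Writing $S=\sum_{p+p'=1-\langle h_{i},\alpha_{j}\rangle}(-1)^{p}t_{i}^{\,\epsilon(p,p')}(e_{i}')^{[p]}e_{j}'(e_{i}')^{[p']}\in\mathrm{End}(U_{v,t}^{-}(\mathfrak{g}))$, where $\epsilon(p,p')$ is the exponent appearing in (4.2), I want to show $S=0$. I would argue by induction on $|\xi|$ for $y\in U_{v,t}^{-}(\mathfrak{g})_{\xi}$, exactly in the spirit of Proposition 4.2 and Corollary 4.3. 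The base case is $S(1)=0$: since $e_{i}'$ raises weight by $\alpha_{i}$ while $U_{v,t}^{-}(\mathfrak{g})$ is concentrated in weights from $Q_{-}$, we have $e_{i}'(1)=0$, so every summand of $S$ annihilates $1$.

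For the inductive step I write a general element of $U_{v,t}^{-}(\mathfrak{g})_{\xi}$ as a sum of terms $f_{k}y'$ with $y'$ of strictly smaller degree, and I move the operators $e_{i}', e_{j}'$ occurring in each monomial of $S$ to the right of $f_{k}$ using (4.1). Because every monomial of $S$ contains the same number of $e_{i}'$-factors and a single $e_{j}'$, the leading ``transport'' scalar $c_{k}$ is uniform across monomials, so this produces the term $c_{k}\,f_{k}\,S(y')$, which vanishes by the induction hypothesis, plus correction terms coming from the Kronecker deltas in (4.1). These corrections occur only when $k=i$ or $k=j$. The main obstacle is precisely the vanishing of these correction terms: for each fixed $k$ they form an alternating sum in which one $e_{i}'$ (resp. the single $e_{j}'$) has been replaced by a constant, so they are a multiple of the Serre combination with one fewer factor, and the $t_{i}$-powers $\epsilon(p,p')$ in (4.2) are exactly those for which the two-parameter Gauss-binomial recursions make this combination telescope to zero. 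I expect this combinatorial identity --- the two-parameter analogue of the standard computation showing that the $e_{i}$ obey the quantum Serre relations, and the ``transpose'' of the one already underlying (R4) --- to be the only genuinely technical step.

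An alternative, perhaps cleaner, route would bypass the explicit combinatorics: introduce a nondegenerate pairing on $U_{v,t}^{-}(\mathfrak{g})$ with $(1,1)=1$ under which $e_{i}'$ is adjoint to left multiplication by $f_{i}$, so that the Serre relation for $e_{i}'$ becomes the order-reversed adjoint of the already-established Serre relation for $f_{i}$; the palindromic symmetry of the coefficients in (4.2) then closes the argument at once. Either way, $f_{j}$ and $e_{i}'$ satisfy all defining relations of $\mathcal{B}_{v,t}(\mathfrak{g})$, and hence $U_{v,t}^{-}(\mathfrak{g})$ is a $\mathcal{B}_{v,t}(\mathfrak{g})$-module.
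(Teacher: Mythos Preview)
Your main line of argument is exactly the paper's: let $f_{j}$ act by left multiplication, read relation (4.1) off the proof of Lemma~4.1 (matching the $k_{i}'$-coefficient via triangular decomposition), note that (4.2) for $X_{i}=f_{i}$ is just (R4), and then show the Serre combination $S$ in the $e_{i}'$ satisfies $Sf_{k}=c_{k}f_{k}S$ so that $S\cdot 1=0$ forces $S=0$. That is precisely what the paper does.

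Two small corrections are worth making. First, your guess at the cancellation mechanism is not quite how it plays out. When you push $S$ past $f_{k}$ there are three correction pieces: two $\delta_{ik}$-terms coming from commuting the outer block $(e_{i}')^{[p]}$ and the inner block $(e_{i}')^{[p']}$ past $f_{k}$, and one $\delta_{jk}$-term from commuting $e_{j}'$ past $f_{k}$. The two $\delta_{ik}$-terms cancel \emph{each other} after an index shift $p\mapsto p+1$ (they are not individually ``shorter Serre combinations''), while the $\delta_{jk}$-term is a scalar times $(e_{i}')^{b}$ and its coefficient is the ordinary one-parameter identity $\sum_{p+p'=b}(-1)^{p}[p]_{v_{i}}^{-1}[p']_{v_{i}}^{-1}v_{i}^{-p'(1-b)}=0$; the powers of $t_{i}$ happen to line up so that no genuinely two-parameter identity is needed. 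So the ``technical step'' is a pairwise cancellation plus a classical binomial identity, not a telescoping of reduced Serre relations.

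Second, your proposed alternative via a nondegenerate form under which $e_{i}'$ is adjoint to $f_{i}$ is the polarization of Proposition~4.8, whose construction goes through Corollary~4.7 and hence already presupposes the $\mathcal{B}_{v,t}(\mathfrak{g})$-module structure you are trying to establish; in the paper's logical order that route is circular.
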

\begin{proof}
Let $b=1-\frac{2i\cdot j}{i\cdot i}.$ From Lemma 4.1, it remains to prove that for $i\neq j,$ $$S=\sum\limits_{p+p'=b}(-1)^{p}t_{i}^{-p\big(p'-2\frac{\langle i, j\rangle}{i\cdot i}+2\frac{\langle j, i\rangle}{i\cdot i}\big)}e_{i}'^{[p]}e_{j}'e_{i}'^{[p']}$$ vanishes as an endomorphism of $U_{v,t}^{-}(\mathfrak{g})$.
From (4.1) we have $$e_{i}'^{n}f_{j}=v^{-n i\cdot j}t^{n(\langle i, j\rangle-\langle j, i\rangle)}f_{j}e_{i}'^{n}+\delta_{ij}v_{i}^{1-n}[n]_{v_{i}}e_{i}'^{n-1}.$$
Hence we have
\begin{align*}
Sf_{k}&=\sum\limits_{p+p'=b}\frac{(-1)^{p}}{[p']_{v_{i},t_{i}}^{!}}t_{i}^{-p\big(p'-2\frac{\langle i, j\rangle}{i\cdot i}+2\frac{\langle j, i\rangle}{i\cdot i}\big)}e_{i}'^{[p]}e_{j}'(v^{-p' i\cdot k}t^{p'(\langle i, k\rangle-\langle k, i\rangle)}f_{k}e_{i}'^{p'}+\delta_{ik}v_{i}^{1-p'}[p']_{v_{i}}e_{i}'^{p'-1})\\
&=\sum\limits_{p+p'=b}\frac{(-1)^{p}}{[p']_{v_{i},t_{i}}^{!}}t_{i}^{-p\big(p'-2\frac{\langle i, j\rangle}{i\cdot i}+2\frac{\langle j, i\rangle}{i\cdot i}\big)}e_{i}'^{[p]}\Big\{v^{-p' i\cdot k}t^{p'(\langle i, k\rangle-\langle k, i\rangle)}(v^{-j\cdot k}t^{\langle j, k\rangle-\langle k, j\rangle}f_{k}e_{j}'+\delta_{jk})e_{i}'^{p'}\\&~~\hspace*{0.4cm}+\delta_{ik}v_{i}^{1-p'}[p']_{v_{i}}e_{j}'e_{i}'^{p'-1}\Big\}\\
&=\sum\limits_{p+p'=b}\frac{(-1)^{p}}{[p']_{v_{i},t_{i}}^{!}[p]_{v_{i},t_{i}}^{!}}t_{i}^{-p\big(p'-2\frac{\langle i, j\rangle}{i\cdot i}+2\frac{\langle j, i\rangle}{i\cdot i}\big)}\Big\{v^{-p' i\cdot k}t^{p'(\langle i, k\rangle-\langle k, i\rangle)}v^{-j\cdot k}t^{\langle j, k\rangle-\langle k, j\rangle}\\
&~~\hspace*{0.4cm}\times(v^{-p i\cdot k}t^{p(\langle i, k\rangle-\langle k, i\rangle)}f_{k}e_{i}'^{p}+\delta_{ik}v_{i}^{1-p}[p]_{v_{i}}e_{i}'^{p-1})e_{j}'e_{i}'^{p'}+\delta_{jk}v^{-p' i\cdot k}t^{p'(\langle i, k\rangle-\langle k, i\rangle)}e_{i}'^{b}\\
&~~\hspace*{0.4cm}+\delta_{ik}v_{i}^{1-p'}[p']_{v_{i}}e_{i}'^{p}e_{j}'e_{i}'^{p'-1}\Big\}\\
\end{align*}
\begin{align*}&=v^{-b i\cdot k-j\cdot k}t^{b\langle i, k\rangle-b\langle k, i\rangle+\langle j, k\rangle-\langle k, j\rangle}f_{k}S\\
&~~\hspace*{0.4cm}+\sum\limits_{p+p'=b}\frac{(-1)^{p}}{[p']_{v_{i},t_{i}}^{!}[p]_{v_{i},t_{i}}^{!}}t_{i}^{-p\big(p'-2\frac{\langle i, j\rangle}{i\cdot i}+2\frac{\langle j, i\rangle}{i\cdot i}\big)}\delta_{ik}v^{-p' i\cdot i-i\cdot j}t^{\langle j, i\rangle-\langle i, j\rangle)}v_{i}^{1-p}[p]_{v_{i}}e_{i}'^{p-1}e_{j}'e_{i}'^{b-p}\\&~~\hspace*{0.4cm}+\sum\limits_{p+p'=b}\frac{(-1)^{p}}{[p']_{v_{i},t_{i}}^{!}[p]_{v_{i},t_{i}}^{!}}t_{i}^{-p\big(p'-2\frac{\langle i, j\rangle}{i\cdot i}+2\frac{\langle j, i\rangle}{i\cdot i}\big)}\delta_{ik}v_{i}^{1-p'}[p']_{v_{i}}e_{i}'^{p}e_{j}'e_{i}'^{b-p-1}\\&~~\hspace*{0.4cm}+\sum\limits_{p+p'=b}\frac{(-1)^{p}}{[p']_{v_{i},t_{i}}^{!}[p]_{v_{i},t_{i}}^{!}}t_{i}^{-p\big(p'-2\frac{\langle i, j\rangle}{i\cdot i}+2\frac{\langle j, i\rangle}{i\cdot i}\big)}v^{-p' i\cdot j}t^{p'(\langle i, j\rangle-\langle j, i\rangle)}e_{i}'^{b}.
\end{align*}
It is easy to see that the second term and the third one cancel out, the last term vanishes follows from $\sum\limits_{p+p'=b}\frac{(-1)^{p}}{[p']_{v_{i}}^{!}[p]_{v_{i}}^{!}}v_{i}^{-p'(1-b)}=0.$ Thus we have $$Sf_{k}=v^{-b i\cdot k-j\cdot k}t^{b\langle i, k\rangle-b\langle k, i\rangle+\langle j, k\rangle-\langle k, j\rangle}f_{k}S.$$ $S=0$ follows from the above identity and $S\cdot 1=0,$ since $e_{i}'\cdot 1=0$ for all $i\in I.$
\end{proof}

Furthermore, we can get the following result by Corollary 4.3.
\begin{corollary}
$U_{v,t}^{-}(\mathfrak{g})$ is a simple $\mathcal{B}_{v,t}(\mathfrak{g})$-module.
\end{corollary}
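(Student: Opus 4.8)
The plan is to show that every nonzero $\mathcal{B}_{v,t}(\mathfrak{g})$-submodule $N$ of $U_{v,t}^{-}(\mathfrak{g})$ is the whole module, by first locating the unit $1$ inside $N$ and then spreading it out via left multiplication by the $f_{i}$. The engine of the argument is Corollary 4.3: the only elements annihilated by all the $e_{i}'$ are the scalar multiples of $1$. So I would descend inside $N$ by repeatedly applying the lowering operators $e_{i}'$ until reaching such an element.

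To guarantee that the descent terminates, I would use the weight grading $U_{v,t}^{-}(\mathfrak{g})=\bigoplus_{\xi\in Q_{-}}U_{v,t}^{-}(\mathfrak{g})_{\xi}$ and, for a nonzero $y=\sum_{\xi}y_{\xi}$, set $D(y)=\max\{|\xi|;\, y_{\xi}\neq 0\}$. The point to check is that $e_{i}'$ is homogeneous of degree $\alpha_{i}$, so $e_{i}'(y)=\sum_{\xi}e_{i}'(y_{\xi})$ with $e_{i}'(y_{\xi})\in U_{v,t}^{-}(\mathfrak{g})_{\xi+\alpha_{i}}$. Since these images lie in pairwise distinct weight spaces, there is no cancellation across weights, and a component survives only when $\xi+\alpha_{i}\in Q_{-}$, in which case $|\xi+\alpha_{i}|=|\xi|-1$. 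Consequently $D(e_{i}'(y))\leq D(y)-1$ whenever $e_{i}'(y)\neq 0$.

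With this measure in place, I would pick $0\neq y\in N$ minimizing $D(y)$; this is legitimate since $\{D(y);\, 0\neq y\in N\}$ is a nonempty subset of $\mathbb{Z}_{\geq 0}$. Minimality forces $e_{i}'(y)=0$ for every $i$, for otherwise $e_{i}'(y)$ would be a nonzero element of $N$ with strictly smaller $D$. Corollary 4.3 then yields $y\in\mathbb{Q}(v,t)\cdot 1$, and $y\neq 0$ gives $1\in N$. Finally, because $f_{i}$ acts by left multiplication, $N$ contains every product $f_{i_{1}}\cdots f_{i_{k}}\cdot 1=f_{i_{1}}\cdots f_{i_{k}}$, and such products span $U_{v,t}^{-}(\mathfrak{g})$ since it is generated as an algebra by the $f_{i}$; hence $N=U_{v,t}^{-}(\mathfrak{g})$, which proves simplicity.

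I expect the only step requiring genuine care to be the verification that $D$ strictly drops under each nonzero $e_{i}'$, i.e.\ the observation that distinct weight components cannot merge or partially cancel, so that applying $e_{i}'$ moves the lowest surviving weight strictly closer to $0$. Everything else is formal: the existence of a $D$-minimal element and the generation of $U_{v,t}^{-}(\mathfrak{g})$ from $1$ by the $f_{i}$ are both immediate.
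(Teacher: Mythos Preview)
Your argument is correct and is precisely the standard fleshing-out of what the paper leaves implicit: the paper simply asserts that Corollary 4.3 implies simplicity, and your descent via the height function $D$ together with the homogeneity of $e_{i}'$ supplies exactly the missing details. There is nothing methodologically different here to compare.
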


\begin{corollary}
$U_{v,t}^{-}(\mathfrak{g})\cong\mathcal{B}_{v,t}(\mathfrak{g})/\sum_{i}\mathcal{B}_{v,t}(\mathfrak{g})e_{i}'$ as $\mathcal{B}_{v,t}(\mathfrak{g})$-modules. \end{corollary}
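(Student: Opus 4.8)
The plan is to realize the isomorphism via evaluation at the cyclic vector $1\in U_{v,t}^{-}(\mathfrak{g})$. Since $U_{v,t}^{-}(\mathfrak{g})$ is a $\mathcal{B}_{v,t}(\mathfrak{g})$-module by Lemma 4.5, I would consider the $\mathbb{Q}(v,t)$-linear map
$$\phi:\mathcal{B}_{v,t}(\mathfrak{g})\longrightarrow U_{v,t}^{-}(\mathfrak{g}),\qquad \phi(P)=P\cdot 1,$$
which is a homomorphism of left $\mathcal{B}_{v,t}(\mathfrak{g})$-modules. It is surjective: its image is a nonzero submodule of the simple module $U_{v,t}^{-}(\mathfrak{g})$ (Corollary 4.6), or, more concretely, $\phi(f_{i_{1}}\cdots f_{i_{k}})=f_{i_{1}}\cdots f_{i_{k}}$ because each $f_{j}$ acts by left multiplication, and such monomials span $U_{v,t}^{-}(\mathfrak{g})$. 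By the first isomorphism theorem it then remains only to identify $\ker\phi$ with $\sum_{i}\mathcal{B}_{v,t}(\mathfrak{g})e_{i}'$.

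The inclusion $\sum_{i}\mathcal{B}_{v,t}(\mathfrak{g})e_{i}'\subseteq\ker\phi$ is immediate from $e_{i}'\cdot 1=0$ (established in the proof of Lemma 4.5): for any $P$ one has $(Pe_{i}')\cdot 1=P\cdot(e_{i}'\cdot 1)=0$. For the reverse inclusion I would first normal-order elements using the commutation relation (4.1): every word in the $e_{i}'$ and $f_{j}$ can be rewritten as a $\mathbb{Q}(v,t)$-combination of products ($f$-monomial)$\cdot$($e'$-monomial), and any term with a nonempty $e'$-part lies in $\sum_{i}\mathcal{B}_{v,t}(\mathfrak{g})e_{i}'$. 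Hence $\mathcal{B}_{v,t}(\mathfrak{g})=\mathcal{F}+\sum_{i}\mathcal{B}_{v,t}(\mathfrak{g})e_{i}'$, where $\mathcal{F}$ denotes the subalgebra generated by the $f_{i}$. Consequently any $P\in\ker\phi$ decomposes as $P=F+Q$ with $F\in\mathcal{F}$ and $Q\in\sum_{i}\mathcal{B}_{v,t}(\mathfrak{g})e_{i}'$, and applying $\phi$ yields $0=\phi(F)=F\cdot 1$.

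It therefore suffices to prove that $F\cdot 1=0$ forces $F=0$, i.e. that the restriction $\phi|_{\mathcal{F}}\colon\mathcal{F}\to U_{v,t}^{-}(\mathfrak{g})$, $f_{i}\mapsto f_{i}$, is injective. The generators $f_{i}\in\mathcal{F}$ satisfy the quantum Serre relations (4.2), so $\mathcal{F}$ is a quotient of the algebra $A$ presented by generators $x_{i}$ and the relations (4.2); the same relations hold among the $f_{i}$ in $U_{v,t}^{-}(\mathfrak{g})$, so $A$ also surjects onto $U_{v,t}^{-}(\mathfrak{g})$, and the composite $A\twoheadrightarrow\mathcal{F}\xrightarrow{\phi|_{\mathcal{F}}}U_{v,t}^{-}(\mathfrak{g})$ is the canonical map $A\to U_{v,t}^{-}(\mathfrak{g})$. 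Here I would invoke the two-parameter analogue of Lusztig's presentation of the negative part, namely that $U_{v,t}^{-}(\mathfrak{g})$ is presented precisely by the $f_{i}$ modulo the relations (4.2), which is available from the geometric construction in [FL1]. Granting this, the canonical map $A\to U_{v,t}^{-}(\mathfrak{g})$ is an isomorphism; since it factors through $\mathcal{F}$, both $A\twoheadrightarrow\mathcal{F}$ and $\phi|_{\mathcal{F}}$ must be isomorphisms, giving the required injectivity and hence $\ker\phi=\sum_{i}\mathcal{B}_{v,t}(\mathfrak{g})e_{i}'$.

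The main obstacle is exactly this last input: that $U_{v,t}^{-}(\mathfrak{g})$ carries no relations beyond the quantum Serre relations, equivalently that the normal-ordered monomials (an $f$-monomial times an $e'$-monomial) form a basis of $\mathcal{B}_{v,t}(\mathfrak{g})$. One may phrase the conclusion entirely as a weight-space count to isolate this dependence: writing $N:=\mathcal{B}_{v,t}(\mathfrak{g})/\sum_{i}\mathcal{B}_{v,t}(\mathfrak{g})e_{i}'$, the relation $\mathcal{B}_{v,t}(\mathfrak{g})=\mathcal{F}+\sum_{i}\mathcal{B}_{v,t}(\mathfrak{g})e_{i}'$ exhibits $N$ as a quotient of $\mathcal{F}$, so $\dim N_{\xi}\le\dim A_{\xi}=\dim U_{v,t}^{-}(\mathfrak{g})_{\xi}$ in each $Q$-weight $\xi$, while the induced surjection $\bar{\phi}\colon N\to U_{v,t}^{-}(\mathfrak{g})$ supplies the opposite inequality; equality in every weight then forces $\bar{\phi}$ to be an isomorphism. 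In either formulation the argument follows Kashiwara's one-parameter treatment in [Kas2], the only new ingredient being the PBW-type structure of $U_{v,t}^{-}(\mathfrak{g})$ recorded in [FL1].
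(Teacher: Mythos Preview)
Your proposal is correct and follows essentially the same route as the paper's proof: both construct the surjection $\mathcal{B}_{v,t}(\mathfrak{g})/\sum_i\mathcal{B}_{v,t}(\mathfrak{g})e_i'\to U_{v,t}^{-}(\mathfrak{g})$ via evaluation at $1$, and both reduce injectivity to the fact that the subalgebra $\mathcal{F}$ of $\mathcal{B}_{v,t}(\mathfrak{g})$ generated by the $f_i$ is isomorphic to $U_{v,t}^{-}(\mathfrak{g})$. The paper simply asserts this last point in one line, whereas you spell out the normal-ordering step $\mathcal{B}_{v,t}(\mathfrak{g})=\mathcal{F}+\sum_i\mathcal{B}_{v,t}(\mathfrak{g})e_i'$ and trace the injectivity back to the Serre-relation presentation of $U_{v,t}^{-}(\mathfrak{g})$ from [FL1].
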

\begin{proof}
Since $e_{i}'\cdot 1=0$ for all $i\in I,$ we have a surjective $\mathcal{B}_{v,t}(\mathfrak{g})$-module morphism $\mathcal{B}_{v,t}(\mathfrak{g})/\sum_{i}\mathcal{B}_{v,t}(\mathfrak{g})e_{i}'\rightarrow U_{v,t}^{-}(\mathfrak{g}).$ On the other hand, the $f_{i}$ ($i\in I$) generate a subalgebra of $\mathcal{B}_{v,t}(\mathfrak{g})$ isomorphic to $U_{v,t}^{-}(\mathfrak{g})$, so this map must be an isomorphism.
\end{proof}
\subsection{Polarization on $U_{v,t}^{-}(\mathfrak{g})$}
\begin{proposition}
There is a unique symmetric bilinear form $(\cdot,\cdot)$ $($called a polarization$)$ on $U_{v,t}^{-}(\mathfrak{g})$ such that $$(1,1)=1,~~~~(f_{i}x,y)=(x,e_{i}'y)~~\mathrm{for~any~} x,y\in U_{v,t}^{-}(\mathfrak{g})~\mathrm{and}~i\in I.$$
\end{proposition}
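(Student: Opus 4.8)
The plan is to follow Kashiwara's three-step template from [Kas2]—uniqueness by a weight recursion, existence by descent from the free algebra, and symmetry by an opposite-form argument—while keeping track of the extra parameter $t$. For uniqueness, note first that since $f_i$ lowers weight by $\alpha_i$ and $e_i'$ raises it, both sides of $(f_i x,y)=(x,e_i'y)$ vanish unless $x,y$ lie in a common $U_{v,t}^{-}(\mathfrak{g})_{\xi}$; so it suffices to pin the form down on each $U_{v,t}^{-}(\mathfrak{g})_{\xi}\times U_{v,t}^{-}(\mathfrak{g})_{\xi}$. I argue by induction on $|\xi|$: for $\xi=0$ the space is $\mathbb{Q}(v,t)\cdot 1$ and $(1,1)=1$ fixes the form, while for $\xi\neq 0$ every $x\in U_{v,t}^{-}(\mathfrak{g})_{\xi}$ is a $\mathbb{Q}(v,t)$-combination $\sum_i f_i x_i$ with $x_i\in U_{v,t}^{-}(\mathfrak{g})_{\xi+\alpha_i}$, whence $(x,y)=\sum_i(x_i,e_i'y)$ is forced; as $|\xi+\alpha_i|=|\xi|-1$ and $e_i'y$ is a fixed element, the right-hand side is already determined. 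This simultaneously yields $\mathbb{Q}(v,t)$-bilinearity in the first slot, and bilinearity in the second follows from $\mathbb{Q}(v,t)$-linearity of $e_i'$.

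For existence, let $\ell\colon U_{v,t}^{-}(\mathfrak{g})\to\mathbb{Q}(v,t)$ be projection onto the weight-$0$ line $\mathbb{Q}(v,t)\cdot 1$, and let $T=\mathbb{Q}(v,t)\langle F_i:i\in I\rangle$ be the free algebra with $\pi\colon T\twoheadrightarrow U_{v,t}^{-}(\mathfrak{g})$, $F_i\mapsto f_i$. I define a $\mathbb{Q}(v,t)$-bilinear pairing $\langle\!\langle\cdot,\cdot\rangle\!\rangle\colon T\times U_{v,t}^{-}(\mathfrak{g})\to\mathbb{Q}(v,t)$ on monomials by $\langle\!\langle F_{i_1}\cdots F_{i_N},y\rangle\!\rangle=\ell(e'_{i_N}\cdots e'_{i_1}y)$, which is well defined because $T$ is free and obeys $\langle\!\langle F_i w,y\rangle\!\rangle=\langle\!\langle w,e_i'y\rangle\!\rangle$ and $\langle\!\langle 1,y\rangle\!\rangle=\ell(y)$. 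This adjunction shows the left radical $R=\{w:\langle\!\langle w,\cdot\rangle\!\rangle=0\}$ is a left ideal, so to descend $\langle\!\langle\cdot,\cdot\rangle\!\rangle$ to a form on $U_{v,t}^{-}(\mathfrak{g})\times U_{v,t}^{-}(\mathfrak{g})$ it is enough to show each quantum Serre generator $\sum_{p+p'=b}(-1)^{p}t_i^{-p(\cdots)}F_i^{[p]}F_jF_i^{[p']}$ lies in $R$. Unwinding $\langle\!\langle\cdot,\cdot\rangle\!\rangle$ on such a generator converts it into the reversed Serre operator $\widetilde{S}=\sum_{p+p'=b}(-1)^{p}t_i^{-p(\cdots)}e_i'^{[p']}e_j'e_i'^{[p]}$ acting on $U_{v,t}^{-}(\mathfrak{g})$, and I verify $\widetilde{S}=0$ by the \emph{same} computation as in Lemma 4.4, namely $\widetilde{S}f_k=(\text{scalar})f_k\widetilde{S}$ together with $\widetilde{S}\cdot 1=0$. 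Since $R$ is a left ideal this kills the whole of $\ker\pi$, producing a genuine $\mathbb{Q}(v,t)$-bilinear form with $(1,1)=1$ and $(f_ix,y)=(x,e_i'y)$, which agrees with the one pinned down above.

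Symmetry is the main obstacle. I put $(x,y)^{\circ}:=(y,x)$; then $(1,1)^{\circ}=1$, and by the uniqueness of the first step it suffices to show $(\cdot,\cdot)^{\circ}$ also satisfies $(f_ix,y)^{\circ}=(x,e_i'y)^{\circ}$, that is, the dual adjunction $(y,f_ix)=(e_i'y,x)$ for all $x,y$. I establish this \emph{together} with $(x,y)=(y,x)$ by a simultaneous induction on $|\xi|$: symmetry at level $|\xi|-1$ lets one rewrite $(x,y)=\sum_i(x_i,e_i'y)$ as $\sum_i(e_i'y,x_i)$, and the dual adjunction at level $|\xi|$ then identifies this with $(y,x)$; conversely the dual adjunction at level $|\xi|$ is reduced, by expanding $y=\sum_j f_j y_j$ and commuting $e_j'$ past $f_i$ via relation (4.1), to the dual adjunction at level $|\xi|-1$ plus a comparison of the lower-weight quantities $(e_i'y_j,e_j'x)$ on the two sides.

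The crux—and where the real work lies—is precisely this comparison of structure constants: the left expansion contributes the factor coming from $e_j'(f_i z)$ and the right one the factor from $e_i'(f_j y_j)$, and these must assemble into the same coefficient of each $(e_i'y_j,e_j'x)$. The symmetry $i\cdot j=j\cdot i$ immediately matches the $v$-powers, but the $t$-powers $t^{\pm(\langle i,j\rangle-\langle j,i\rangle)}$ introduced by (4.1) must be tracked with great care; this $t$-bookkeeping is the genuinely new feature absent from the one-parameter theory of [Kas2], and controlling it—so that the two coefficients coincide and the companion hypothesis closes—is exactly the delicate point on which the proof of $(x,y)=(y,x)$ turns.
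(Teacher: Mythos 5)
Your existence step fails at its central claim, and the failure is exactly the $t$-twist you postponed to the end. Transposing a Serre generator through your $t$-bilinear pairing $\langle\!\langle\cdot,\cdot\rangle\!\rangle$ gives the \emph{reversed} word with \emph{unchanged} coefficients, $\widetilde{S}=\sum_{p+p'=b}(-1)^{p}t_{i}^{-p(p'+c)}e_{i}'^{[p']}e_{j}'e_{i}'^{[p]}$ with $c=\tfrac{2(\langle j,i\rangle-\langle i,j\rangle)}{i\cdot i}$, whereas the relation the operators $e_{i}'$ actually satisfy (relation (4.2), verified in the paper's Lemma 4.6) is the \emph{unreversed} word with those coefficients; reindexing $p\leftrightarrow p'$ shows the two differ termwise by the non-constant factor $t_{i}^{(p-p')c}$, so $\widetilde{S}$ is not a multiple of the true relation. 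And it is genuinely nonzero: take rank $2$ with $\Lambda_{11}=\Lambda_{22}=1$, $\Lambda_{12}=-1$, $\Lambda_{21}=0$ (so $i\cdot j=-1$, $b=2$, $t_{i}=t$, $c=1$), where $\widetilde{S}=e_{i}'^{[2]}e_{j}'-t^{-2}e_{i}'e_{j}'e_{i}'+t^{-2}e_{j}'e_{i}'^{[2]}$; a direct computation using (4.1) gives
$$\widetilde{S}(f_{i}^{2}f_{j})=vt-(v+v^{-1})t^{-1}+v^{-1}t^{-3}=(t-t^{-1})(v-v^{-1}t^{-2})\neq 0,$$
while the unreversed operator $e_{j}'e_{i}'^{[2]}-t^{-2}e_{i}'e_{j}'e_{i}'+t^{-2}e_{i}'^{[2]}e_{j}'$ does annihilate $f_{i}^{2}f_{j}$. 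So ``the same computation as in Lemma 4.6'' does not go through for $\widetilde{S}$, your pairing does not descend, and (a secondary slip) even if each generator lay in $R$, a left ideal only absorbs left multiples $uS_{ij}$, while $\ker\pi$ also contains the right multiples $S_{ij}u'$ that your unwinding actually requires.

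The deeper point is that the naive target is unattainable, so the ``delicate $t$-bookkeeping'' in your symmetry step cannot close. The recursion itself, applied in the example above, forces $(f_{i}f_{j},f_{j}f_{i})=(f_{j},e_{i}'(f_{j}f_{i}))=vt^{-1}$ but $(f_{j}f_{i},f_{i}f_{j})=(f_{i},e_{j}'(f_{i}f_{j}))=vt$: the two structure constants $t^{\langle i,j\rangle-\langle j,i\rangle}$ and $t^{\langle j,i\rangle-\langle i,j\rangle}$ from (4.1) that your induction must match differ by $t^{2(\langle i,j\rangle-\langle j,i\rangle)}$. Hence any form satisfying $(1,1)=1$, $(f_{i}x,y)=(x,e_{i}'y)$ and $\mathbb{Q}(v,t)$-linearity in the second slot can only be symmetric up to the bar $t\mapsto t^{-1}$ (and is $t\mapsto t^{-1}$-semilinear in the first slot) whenever some $\Lambda_{ij}\neq\Lambda_{ji}$. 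This is precisely why the paper's proof does not descend a bilinear pairing from the free algebra: it transports the $\mathcal{B}_{v,t}(\mathfrak{g})$-module structure to $\mathrm{Hom}(U_{v,t}^{-}(\mathfrak{g}),\mathbb{Q}(v,t))$ through the antiautomorphism $\rho$, and $\rho$ \emph{inverts} $t$, so the Serre coefficients get $t$-conjugated in exactly the way that makes the reversed operator vanish; the map $\Phi$ and $(x,y)=\Phi(x)(y)$ then give existence, with symmetry read through this twist. Your uniqueness paragraph is essentially fine (though weight-orthogonality of the form needs symmetry, via $(1,f_{j}w)=(w,e_{j}'1)=0$, rather than being automatic), and at $t=1$ or for symmetric $\Lambda$ your whole argument correctly reduces to Kashiwara's; but in the genuinely two-parameter case the construction must route through $\rho$ with $\rho(t)=t^{-1}$, and your proposal never does.
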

\begin{proof}
Let us endow $M=\mathrm{Hom}(U_{v,t}^{-}(\mathfrak{g}), \mathbb{Q}(v,t))$ with a structure of a left $\mathcal{B}_{v,t}(\mathfrak{g})$-module via $\rho;$ i.e., we have $(p\cdot\phi)(x)=\phi(\rho(p)\cdot x)$ for $p\in \mathcal{B}_{v,t}(\mathfrak{g}),$ $\phi\in M$ and $x\in U_{v,t}^{-}(\mathfrak{g}).$

Let $\phi_{0}\in M$ be defined by $\phi_{0}(1)=1$ and $\phi_{0}(\sum_{i}f_{i}U_{v,t}^{-}(\mathfrak{g}))=0.$ Since $e_{i}'\phi_{0}=0$ for any $i,$ we have a $\mathcal{B}_{v,t}(\mathfrak{g})$-homomorphism $\Phi: U_{v,t}^{-}(\mathfrak{g})\cong\mathcal{B}_{v,t}(\mathfrak{g})/\sum_{i}\mathcal{B}_{v,t}(\mathfrak{g})e_{i}'\rightarrow M,$ which sends $1$ to $\phi_{0}.$

Now we define a bilinear form $(\cdot, \cdot)$ on $U_{v,t}^{-}(\mathfrak{g})$ by $(x, y)=\Phi(x)(y)$ for $x, y\in U_{v,t}^{-}(\mathfrak{g}).$ Then we have $(1,1)=1,$ $(f_{i}x,y)=(f_{i}\Phi(x))(y)=(x,e_{i}'y).$ It is clear that these properties completely determine the bilinear form. Since the form $(\cdot, \cdot)'$ defined by $(x,y)'=\Phi(y)(x)$ satisfies the same properties, the symmetry follows from the uniqueness of such a bilinear form.
\end{proof}

\begin{corollary}
The bilinear form $(\cdot,\cdot)$ on $U_{v,t}^{-}(\mathfrak{g})$ is nondegenerate. Moreover, we have $(U_{v,t}^{-}(\mathfrak{g})_{\xi}, U_{v,t}^{-}(\mathfrak{g})_{\eta})=0$ for $\xi\neq \eta.$
\end{corollary}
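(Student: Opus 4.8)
The plan is to prove the orthogonality of distinct weight spaces first and then deduce nondegeneracy from it together with Corollary 4.3. Throughout I write $U^{-}=U_{v,t}^{-}(\mathfrak{g})$ for brevity, and I recall the two facts that drive everything: left multiplication by $f_{i}$ sends $U^{-}_{\xi}$ into $U^{-}_{\xi-\alpha_{i}}$ while $e_{i}'$ sends $U^{-}_{\xi}$ into $U^{-}_{\xi+\alpha_{i}}$, and the defining adjunction of the polarization reads $(f_{i}x,y)=(x,e_{i}'y)$. Note also that $U^{-}_{0}=\mathbb{Q}(v,t)\cdot 1$ and that the augmentation ideal $\bigoplus_{\eta\neq 0}U^{-}_{\eta}$ equals $\sum_{i}f_{i}U^{-}$, since $U^{-}$ is generated by the $f_{i}$.

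For the orthogonality I would argue by induction on $|\xi|$ that $(x,y)=0$ whenever $x\in U^{-}_{\xi}$, $y\in U^{-}_{\eta}$ and $\xi\neq\eta$. In the base case $\xi=0$ we have $x=c\cdot 1$, and since $\Phi(1)=\phi_{0}$ vanishes on $\sum_{i}f_{i}U^{-}$, we get $(1,y)=\phi_{0}(y)=0$ for every $y\in U^{-}_{\eta}$ with $\eta\neq 0$. For $\xi\neq 0$ I would write $x=\sum_{i}f_{i}x_{i}$ with $x_{i}\in U^{-}_{\xi+\alpha_{i}}$, whence
$$(x,y)=\sum_{i}(f_{i}x_{i},y)=\sum_{i}(x_{i},e_{i}'y).$$
Here $x_{i}\in U^{-}_{\xi+\alpha_{i}}$, $e_{i}'y\in U^{-}_{\eta+\alpha_{i}}$, and $|\xi+\alpha_{i}|=|\xi|-1$, so the induction hypothesis kills every summand unless $\eta+\alpha_{i}=\xi+\alpha_{i}$, i.e. unless $\eta=\xi$. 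This gives the second assertion.

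For nondegeneracy let $N=\{x\in U^{-}:(x,z)=0\text{ for all }z\in U^{-}\}$ be the radical. The orthogonality just proved makes $N$ homogeneous, $N=\bigoplus_{\xi}(N\cap U^{-}_{\xi})$: if $x=\sum_{\xi}x_{\xi}\in N$ then for homogeneous $y\in U^{-}_{\eta}$ one has $(x,y)=(x_{\eta},y)$, forcing each $x_{\eta}\in N$. The key point is that $N$ is stable under every $e_{i}'$: using the symmetry of the form together with the adjunction twice,
$$(e_{i}'x,y)=(y,e_{i}'x)=(f_{i}y,x)=(x,f_{i}y)=0\qquad\text{for }x\in N,$$
so $e_{i}'x\in N$. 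Now suppose $N\neq 0$ and choose a nonzero homogeneous $x\in N\cap U^{-}_{\xi}$ with $|\xi|$ minimal. If $\xi=0$ then $x=c\cdot 1$ with $c\neq 0$, contradicting $(1,1)=1$. If $\xi\neq 0$ then $e_{i}'x\in N\cap U^{-}_{\xi+\alpha_{i}}$ with $|\xi+\alpha_{i}|<|\xi|$, so minimality forces $e_{i}'x=0$ for all $i$; by Corollary 4.3 this gives $x\in\mathbb{Q}(v,t)\cdot 1$, hence $\xi=0$, a contradiction. Therefore $N=0$ and the form is nondegenerate.

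Every step here is of the standard Kashiwara type, and I expect the only point requiring genuine care to be the $e_{i}'$-stability of the radical, where one must correctly combine the symmetry of $(\cdot,\cdot)$ with the adjunction $(f_{i}\,\cdot,\cdot)=(\cdot,e_{i}'\,\cdot)$; once that is secured, the minimal-weight element is annihilated by all $e_{i}'$ and Corollary 4.3 closes the argument at once.
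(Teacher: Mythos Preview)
Your proof is correct and follows essentially the same approach as the paper: induction on $|\xi|$ for orthogonality, and then the combination of the adjunction $(f_{i}\,\cdot,\cdot)=(\cdot,e_{i}'\,\cdot)$ with Corollary~4.3 for nondegeneracy. The paper phrases the nondegeneracy argument as a direct induction on $|\xi|$ (if $y\in U^{-}_{\xi}$ is orthogonal to $U^{-}_{\xi}$ then each $e_{i}'(y)$ is orthogonal to $U^{-}_{\xi+\alpha_{i}}$, hence zero by induction, hence $y=0$), whereas you package it as a minimal-weight element in the radical; these are equivalent formulations of the same induction.
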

\begin{proof}
The second claim follows from the definition of the bilinear form and can be shown by induction on the height of weights. Nondegeneracy of the bilinear form may be also shown by induction on $|\xi|.$ If $|\xi|=0,$ this is trivial. Assume $|\xi|> 0.$ If $y\in U_{v,t}^{-}(\mathfrak{g})_{\xi}$ satisfies $(y, U_{v,t}^{-}(\mathfrak{g})_{\xi})=0,$ then $(e_{i}'(y), U_{v,t}^{-}(\mathfrak{g})_{\xi+\alpha_{i}})=(y, f_{i}U_{v,t}^{-}(\mathfrak{g})_{\xi+\alpha_{i}})=0,$ and hence $e_{i}'(y)=0$ for any $i$ by the hypothesis of induction. It follows that $y=0$ by Corollary 4.3.
\end{proof}

\subsection{Crystal bases of $U_{v,t}^{-}(\mathfrak{g})$}
Let $\mathcal{O}(B_{v,t}(\mathfrak{g}))$ be the category of $B_{v,t}(\mathfrak{g})$-modules $M$ such that for any $m\in M$ there exists an integer $l$ such that $e_{i_{1}}'\cdots e_{i_{l}}'m=0$ for any $i_{1},\ldots, i_{l}\in I.$

Let $i\in I$ and let $M\in \mathcal{O}(B_{v,t}(\mathfrak{g}))$. Then by [Kas2, Proposition 3.2.1(a)] we have $$M=\bigoplus_{n\geq 0} f_{i}^{(n)}\mathrm{Ker}~e_{i}'.$$ We define the endomorphisms $$\tilde{e}_{i}(f_{i}^{(n)}y)=f_{i}^{(n-1)}y,~~~~\tilde{f}_{i}(f_{i}^{(n)}y)=f_{i}^{(n+1)}y~~\mathrm{for}~y\in\mathrm{Ker}~e_{i}'.$$

\begin{definition} Let $M\in \mathcal{O}(B_{v,t}(\mathfrak{g}))$. A pair $(L, B)$ is called a (lower) crystal basis of $M$ if it satisfies the following conditions:\vskip2mm
(1) $L$ is a free $\mathbf{A}$-submodule of $M$ such that $M\cong \mathbb{Q}(v,t)\otimes_{\mathbf{A}}L;$

(2) $\tilde{e}_{i}L\subset L,$ $\tilde{f}_{i}L\subset L$ for any $i;$

(3) $B$ is a $\mathbb{Q}(t)$-basis of $L/vL\cong \mathbb{Q}(t)\otimes_{\mathbf{A}}L;$

(4) $\tilde{e}_{i}B\subset B\cup\{0\}$ and $\tilde{f}_{i}B\subset B$ for any $i;$

(5) for any $i$ and $b\in B$ such that $\tilde{e}_{i}b\in B,$ we have $b=\tilde{f}_{i}\tilde{e}_{i}b.$
\end{definition}
\noindent $\mathbf{Remark ~4.11.}$ A multi-parameter version of the Kashiwara algebra can be found in [KKO, Definition 3.7].

\section{Existence of crystal bases}
In this section, we will prove the existence of crystal bases for integrable $U_{v,t}(\mathfrak{g})$-modules and for the algebra $U_{v,t}^{-}(\mathfrak{g})$. Assuming the existence theorem, one can prove the uniqueness of crystal bases by the same arguments as in the proof of [Kas2, Theorem 3].

For $\lambda\in P_{+},$ let $L(\lambda)$ be the $\mathbf{A}$-submodule of $V(\lambda)$ generated by the elements of the form $\tilde{f}_{i_{1}}\cdots \tilde{f}_{i_{l}}y_{\lambda},$ $l\geq 0,$ $i_{1},\ldots, i_{l}\in I,$ and let $B(\lambda)$ be the subset of $L(\lambda)/vL(\lambda)$ consisting of the nonzero vectors of the form $\tilde{f}_{i_{1}}\cdots \tilde{f}_{i_{l}}y_{\lambda},$ $l\geq 0,$ $i_{1},\ldots, i_{l}\in I.$ Similarly, we define $L(\infty)$ to be the $\mathbf{A}$-submodule of $U_{v,t}^{-}(\mathfrak{g})$ generated by the vectors of the form $\tilde{f}_{i_{1}}\cdots \tilde{f}_{i_{l}}\cdot 1,$ $l\geq 0,$ $i_{1},\ldots, i_{l}\in I,$ and set $B(\infty)$ to be the subset of $L(\infty)/vL(\infty)$ consisting of the nonzero vectors of the form $\tilde{f}_{i_{1}}\cdots \tilde{f}_{i_{l}}\cdot 1,$ $l\geq 0,$ $i_{1},\ldots, i_{l}\in I.$ In this section, our goal is to prove the following theorem.
\begin{theorem}
$(a)$ The pair $(L(\lambda), B(\lambda))$ is a crystal basis of $V(\lambda).$

$(b)$ The pair $(L(\infty), B(\infty))$ ia a crystal basis of $U_{v,t}^{-}(\mathfrak{g}).$

$(c)$ For $\lambda\in P_{+},$ let $\pi_{\lambda}: U_{v,t}^{-}(\mathfrak{g})\rightarrow V(\lambda)$ be the $U_{v,t}^{-}(\mathfrak{g})$-module homomorphism defined by $P\mapsto Py_{\lambda}.$ Then

$(\mathbf{i})$ $\pi_{\lambda}(L(\infty))=L(\lambda).$

~~~~~~Hence $\pi_{\lambda}$ induces a surjective homomorphism $\bar{\pi}_{\lambda}: L(\infty)/vL(\infty)\rightarrow L(\lambda)/vL(\lambda).$

$(\mathbf{ii})$ The set $\{b\in B(\infty); \bar{\pi}_{\lambda}(b)\neq 0\}$ is isomorphic to $B(\lambda).$

$(\mathbf{iii})$ $\tilde{f}_{i}\circ \bar{\pi}_{\lambda}=\bar{\pi}_{\lambda}\circ \tilde{f}_{i}.$

$(\mathbf{iv})$ If $b\in B(\infty)$ satisfies $\bar{\pi}_{\lambda}(b)\neq 0,$ then $\tilde{e}_{i}\bar{\pi}_{\lambda}(b)=\bar{\pi}_{\lambda}(\tilde{e}_{i}b).$
\end{theorem}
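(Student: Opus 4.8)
The plan is to establish (a), (b) and (c) simultaneously by Kashiwara's grand loop induction on the height $l=|\xi|$, transporting the one-parameter arguments of [Kas2] to the present $(v,t)$-setting. The first point is that the whole problem localizes to rank one: for a fixed $i$ the subalgebra $U_{v,t}(\mathfrak{g}_i)\cong U_v(\mathfrak{sl}_2)\otimes_{\mathbb{Q}(v)}\mathbb{Q}(v,t)$, and on each weight space the second parameter $t$ acts as an invertible scalar, so the $i$-string decompositions --- Lemma 3.2 for an integrable module and $M=\bigoplus_{n\ge 0}f_i^{(n)}\mathrm{Ker}\,e_i'$ for $M\in\mathcal{O}(B_{v,t}(\mathfrak{g}))$ --- reduce the action of $\tilde{e}_i,\tilde{f}_i$ and their compatibility with $\mathbf{A}$-lattices to the known $U_v(\mathfrak{sl}_2)$ theory. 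Lemma 3.2(b), through the invertible rescalings $R_i,S_i$, lets me pass freely between the module operators and the operators on $U_{v,t}^-(\mathfrak{g})$ coming from its $\mathcal{B}_{v,t}(\mathfrak{g})$-module structure, since the two families preserve the same lattices and induce the same maps modulo $v$.

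For the induction I would bundle, at each level $l$, the package of assertions: that $\tilde{e}_i,\tilde{f}_i$ preserve $L(\lambda)$ and $L(\infty)$ in degrees $\le l$; that $B(\lambda)$ and $B(\infty)$ are $\mathbb{Q}(t)$-bases of the corresponding truncations of $L/vL$ and obey conditions (6)--(7) of Definition 3.1 and (4)--(5) of Definition 4.10; that $\pi_\lambda(L(\infty))=L(\lambda)$ together with (c)(ii)--(iv) in this range; and an almost-orthonormality statement for the polarization, namely $\big(L(\infty),L(\infty)\big)\subseteq\mathbf{A}$ with $B(\infty)$ orthonormal modulo $vL(\infty)$, and likewise for $L(\lambda)$. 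The engine of the step from $l-1$ to $l$ is the projection $\pi_\lambda$: choosing $\lambda$ with $\langle h_i,\lambda\rangle\gg 0$ makes $\pi_\lambda\colon U_{v,t}^-(\mathfrak{g})_\xi\xrightarrow{\sim}V(\lambda)_{\lambda+\xi}$ for $|\xi|\le l$, exactly as in the proof of Corollary 4.3, so the module statements and the $U_{v,t}^-(\mathfrak{g})$ statements are interchangeable in this range. Corollary 4.4 is then the device that converts the $i$-string structure of $V(\lambda)$, governed by $e_i^n$, into that of $U_{v,t}^-(\mathfrak{g})$, governed by $e_{i}''^{n}$, matching the two Kashiwara operators up to the scalars supplied by Lemma 3.2(b).

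To close the loop I would use the tensor product rule (Theorem 3.4) together with the intertwiners $\Phi_{\lambda,\mu},\Psi_{\lambda,\mu}$ and the polarization. Applying the rule to $V(\lambda)\otimes V(\mu)$ and comparing with $V(\lambda+\mu)$ through $\Psi_{\lambda,\mu}\circ\Phi_{\lambda,\mu}=\mathrm{id}$, which commutes with $\tilde{e}_i,\tilde{f}_i$ and is adjoint for the polarizations, propagates the crystal structure from smaller highest weights, and in the large-$\lambda$ limit to $U_{v,t}^-(\mathfrak{g})$ itself. The polarization (Proposition 3.6 and Proposition 4.8) is what upgrades a spanning statement to a basis statement: self-duality of the lattice under $(\cdot,\cdot)$, established inductively alongside everything else, forces the generators $\tilde{f}_{i_1}\cdots\tilde{f}_{i_l}\cdot 1$ to reduce to an orthonormal, hence linearly independent, family modulo $v$, which yields at once the freeness of $L(\infty)$ (resp. $L(\lambda)$) and the basis property of $B(\infty)$ (resp. $B(\lambda)$); nondegeneracy is already guaranteed by Corollary 4.9.

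The main obstacle is precisely this interlocking. The delicate point at level $l$ is that $\tilde{e}_i,\tilde{f}_i$ be well defined on $L(\infty)/vL(\infty)$ and that $B(\infty)$ be linearly independent, for the defining generators satisfy many relations and only the almost-orthonormality with respect to the polarization separates them; yet establishing that the polarization is $\mathbf{A}$-valued and orthonormal modulo $v$ at the new level itself relies on the rank-one reduction and the tensor-product compatibility, which in turn presuppose the level-$l$ basis statement. This genuine circularity is what forces the whole collection to be proved as a single induction rather than as a chain of independent lemmas. Once the loop is closed, (c) is immediate: $\pi_\lambda$ is a $U_{v,t}^-(\mathfrak{g})$-module map, so it commutes with $f_i$ and hence with $\tilde{f}_i$ on the generating strings, giving $\pi_\lambda(L(\infty))=L(\lambda)$ and the intertwining $\tilde{f}_i\circ\bar\pi_\lambda=\bar\pi_\lambda\circ\tilde{f}_i$ of (iii); the identification in (ii) of $\{b\in B(\infty);\bar\pi_\lambda(b)\ne 0\}$ with $B(\lambda)$ and the relation $\tilde{e}_i\bar\pi_\lambda(b)=\bar\pi_\lambda(\tilde{e}_ib)$ of (iv) then follow from the compatibility of the $i$-string decompositions under $\bar\pi_\lambda$.
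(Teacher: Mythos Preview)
Your proposal is correct and follows the paper's approach: Kashiwara's grand loop induction on $|\xi|$, with the rank-one reduction, the large-$\lambda$ projection $\pi_\lambda$, Corollary~4.4, the tensor product rule together with the maps $\Phi_{\lambda,\mu},\Psi_{\lambda,\mu}$, and the polarization all playing the roles you describe. One organizational difference is worth flagging. You fold almost-orthonormality of $B(\infty)$ and self-duality $L(\infty)=L(\infty)^*$ into the loop and use them to extract linear independence; the paper (following [Kas2, \S4]) does \emph{not} carry orthonormality through the induction. Instead, the basis statements $\mathbf{D}_l$, $\mathbf{E}_l$ are obtained from the tensor embedding: $\Phi_{\lambda,\mu}$ sends $B(\lambda+\mu)$ into $B(\lambda)\otimes B(\mu)$, which is already a basis of $(L(\lambda)\otimes L(\mu))/v(L(\lambda)\otimes L(\mu))$ by Lemma~5.3, and this yields linear independence upstairs. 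The polarization enters the loop only through the dual-lattice comparison of Lemmas~5.6--5.7, which is used to control $\Psi_{\lambda,\mu}$ (Proposition~5.8, giving $\mathbf{I}_l$); orthonormality itself is deferred to Section~6, \emph{after} Theorem~5.1. Your packaging could be made to close as well, but the mechanism you cite for linear independence is not the one the paper actually runs.
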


For $\lambda,\mu\in P_{+},$ recall the $U_{v,t}(\mathfrak{g})$-module homomorphisms $\Phi_{\lambda, \mu}$ and $\Psi_{\lambda, \mu}$, which commute with the Kashiwara operators $\tilde{e}_{i}$ and $\tilde{f}_{i}.$ We will define a map $S_{\lambda, \mu}: V(\lambda)\otimes V(\mu)\rightarrow V(\lambda)$ by $$S_{\lambda, \mu}(y\otimes y_{\mu})=y~~\mathrm{for}~ y\in V(\lambda) ~~~\mathrm{and}~~~S_{\lambda, \mu}(V(\lambda)\otimes \sum_{i}f_{i}V(\mu))=0.$$ Note that $S_{\lambda, \mu}$ is $U_{v,t}^{-}(\mathfrak{g})$-linear.

For $l\in \mathbb{Z}_{\geq 0},$ we set $Q_{-}(l)=\{\xi\in Q_{-}; |\xi|\leq l\}.$ For $\lambda,\mu\in P_{+}$ and $\xi\in Q_{-}(l)$, we will prove that the following statements are true, which would complete the proof of Theorem 5.1. We will modify Kashiwara's grand loop argument, and will only show the parts which differ most from [Kas, $\S4$].

$\mathbf{A}_{l}$ : $\tilde{e}_{i}L(\infty)_{\xi}\subset L(\infty)$ for any $i\in I.$

$\mathbf{B}_{l}$ : $\tilde{e}_{i}L(\lambda)_{\lambda+\xi}\subset L(\lambda)$ for any $i\in I.$

$\mathbf{C}_{l}$ : $\pi_{\lambda}(L(\infty)_{\xi})=L(\lambda)_{\lambda+\xi}.$

$\mathbf{D}_{l}$ : $B(\infty)_{\xi}$ is a $\mathbb{Q}(t)$-basis of $L(\infty)_{\xi}/vL(\infty)_{\xi}.$

$\mathbf{E}_{l}$ : $B(\lambda)_{\lambda+\xi}$ is a $\mathbb{Q}(t)$-basis of $L(\lambda)_{\lambda+\xi}/vL(\lambda)_{\lambda+\xi}.$

$\mathbf{F}_{l}$ : For any $\xi\in Q_{-}(l-1)$ and $P\in L(\infty)_{\xi},$ we have $\tilde{f}_{i}(Py_{\lambda})\equiv (\tilde{f}_{i}P)y_{\lambda}~\mathrm{mod}~vL(\lambda).$

$\mathbf{G}_{l}$ : $\tilde{e}_{i}B(\infty)_{\xi}\subset B(\infty)\cup\{0\}$ and $\tilde{e}_{i}B(\lambda)_{\lambda+\xi}\subset B(\lambda)\cup\{0\}$ for any $i\in I.$

$\mathbf{H}_{l}$ : $\Phi_{\lambda, \mu}(L(\lambda+\mu)_{\lambda+\mu+\xi})\subset L(\lambda)\otimes L(\mu).$

$\mathbf{I}_{l}$ : $\Psi_{\lambda, \mu}((L(\lambda)\otimes L(\mu))_{\lambda+\mu+\xi})\subset L(\lambda+\mu).$

$\mathbf{J}_{l}$ : $\Psi_{\lambda, \mu}((B(\lambda)\otimes B(\mu))_{\lambda+\mu+\xi})\subset B(\lambda+\mu)\cup\{0\}.$

$\mathbf{K}_{l}$ : We have $\{b\in B(\infty)_{\xi}; \bar{\pi}_{\lambda}(b)\neq 0\}\xrightarrow{\sim} B(\lambda)_{\lambda+\xi}.$

$\mathbf{L}_{l}$ : If $b\in B(\infty)_{\xi}$ satisfies $\bar{\pi}_{\lambda}(b)\neq 0,$ then we have $\tilde{e}_{i}\bar{\pi}_{\lambda}(b)=\bar{\pi}_{\lambda}(\tilde{e}_{i}b).$

$\mathbf{M}_{l}$ : For any $b\in B(\lambda)_{\lambda+\xi}$ and $b'\in B(\lambda)_{\lambda+\xi+\alpha_{i}},$ we have $b=\tilde{f}_{i}b'$ if and only if $b'=\tilde{e}_{i}b.$

$\mathbf{N}_{l}$ : For any $i$ and $b\in B(\infty)_{\xi}$ such that $\tilde{e}_{i}b\neq 0,$ we have $b=\tilde{f}_{i}\tilde{e}_{i}b.$

Note that the above statements are true for $l=0$ and $l=1.$ From now on, we assume $l\geq 2$ and will prove $\mathbf{A}_{l},\ldots,\mathbf{N}_{l},$ assuming that $\mathbf{A}_{l-1},\ldots,\mathbf{N}_{l-1}$ are true.

\begin{lemma}{\rm (See [Kas2, Lemma 4.3.1].)}
Let $\xi\in Q_{-}(l-1)$, $\lambda \in P_{+},$ and $y\in L(\infty)_{\xi}$ $($resp. $L(\lambda)_{\lambda+\xi})$. If $y=\sum_{i}f_{i}^{(n)}y_{n}$ and if $e_{i}'y_{n}=0$ $($resp. $y_{n}\in \mathrm{Ker}~e_{i}\cap V(\lambda)_{\lambda+\xi+n\alpha_{i}},$ and $y_{n}=0$ except when $\langle h_{i}, \lambda+\xi+n\alpha_{i}\rangle\geq n\geq 0)$, then all $y_{n}$ belong to $L(\infty)$ $($resp. $L(\lambda)$). Moreover, if $y ~\mathrm{mod}$ $vL(\infty)$ $($resp. $vL(\lambda))$ belongs to $B(\infty)$ $($resp. $B(\lambda))$, then there exists $n_{0}$ such that $y\equiv f_{i}^{(n_{0})}y_{n_{0}}~\mathrm{mod}~ vL(\infty)$ $($resp. $vL(\lambda))$.
\end{lemma}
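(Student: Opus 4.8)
The plan is to treat the two assertions of the lemma separately and, in both the $U_{v,t}^{-}(\mathfrak{g})$ and $V(\lambda)$ cases, to run an induction on the height $|\xi|$, drawing on the level-$(l-1)$ statements $\mathbf{A}_{l-1},\dots,\mathbf{N}_{l-1}$ that are in force. Throughout I read the hypothesis as the $i$-string decomposition $y=\sum_{n\geq 0}f_{i}^{(n)}y_{n}$ for the fixed index $i$ (the subscript in the displayed sum is a typo for $n$), so that $y_{n}$ has weight $\xi+n\alpha_{i}$ with $|\xi+n\alpha_{i}|=|\xi|-n\leq l-1$; this keeps every term inside the range where the inductive hypotheses apply.

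For the first assertion I would first record two stability facts. On the one hand $\tilde{f}_{i}L(\infty)\subseteq L(\infty)$ (resp. $\tilde{f}_{i}L(\lambda)\subseteq L(\lambda)$) holds by the very definition of $L(\infty)$ (resp. $L(\lambda)$) as the $\mathbf{A}$-span of $\tilde{f}$-monomials applied to $1$ (resp. to $y_{\lambda}$), since $\tilde{f}_{i}$ is $\mathbb{Q}(v,t)$-linear and carries such a monomial to another one. On the other hand $\mathbf{A}_{l-1}$ (resp. $\mathbf{B}_{l-1}$) gives $\tilde{e}_{i}L(\infty)_{\eta}\subseteq L(\infty)$ (resp. $\tilde{e}_{i}L(\lambda)_{\lambda+\eta}\subseteq L(\lambda)$) for all $|\eta|\leq l-1$. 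Since $\tilde{e}_{i}(f_{i}^{(n)}y_{n})=f_{i}^{(n-1)}y_{n}$ and $f_{i}^{(k)}y_{n}=\tilde{f}_{i}^{\,k}y_{n}$ for $y_{n}$ in the relevant kernel, I can extract the top term: if $N=\max\{n:y_{n}\neq 0\}$ then $\tilde{e}_{i}^{\,N}y=y_{N}$, so $y_{N}\in L(\infty)$ after $N$ applications of $\mathbf{A}_{l-1}$ along the strictly decreasing weights. A downward recursion $y_{n}=\tilde{e}_{i}^{\,n}y-\sum_{m>n}\tilde{f}_{i}^{\,m-n}y_{m}$ then places each $y_{n}$ in $L(\infty)$ (resp. $L(\lambda)$); the module case is identical once the $\mathfrak{sl}_{2}$-bound $\langle h_{i},\lambda+\xi+n\alpha_{i}\rangle\geq n\geq 0$ is invoked to guarantee that the decomposition is finite and well defined.

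For the second (``Moreover'') assertion I would induct on $|\xi|$, the base case $|\xi|=0$ being immediate. Writing $b=y\bmod vL(\infty)\in B(\infty)_{\xi}$ and $\bar{y}_{n}=y_{n}\bmod v$, I first note that $\overline{f_{i}^{(n)}y_{n}}=0$ exactly when $\bar{y}_{n}=0$, using the two stability facts together with $\tilde{e}_{i}(vL)\subseteq vL$ and $\tilde{f}_{i}(vL)\subseteq vL$. Since $|\xi|\leq l-1$, statement $\mathbf{G}_{l-1}$ gives $\tilde{e}_{i}b\in B(\infty)\cup\{0\}$. If $\tilde{e}_{i}b=0$, the whole $i$-string collapses to $n_{0}=0$ and $b\equiv y_{0}$. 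If $\tilde{e}_{i}b=b'\neq 0$, then $b'\in B(\infty)_{\xi+\alpha_{i}}$ with $|\xi+\alpha_{i}|=|\xi|-1$, and applying the inductive hypothesis to $b'$ (whose string data are $y'_{m}=y_{m+1}$) yields a single surviving index, so $\bar{y}_{m}=0$ for all $m\geq 1$ save one value $n_{0}$. Finally $\mathbf{N}_{l-1}$ (resp. $\mathbf{M}_{l-1}$ in the $V(\lambda)$ case) gives $b=\tilde{f}_{i}\tilde{e}_{i}b$, which forces the remaining low term $\bar{y}_{0}$ to vanish and pins down $b\equiv f_{i}^{(n_{0})}y_{n_{0}}$.

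The routine parts are the stability of $\tilde{e}_{i},\tilde{f}_{i}$ and the telescoping recursion in the first assertion. I expect the main obstacle to be the second assertion, namely showing that only one term of the $i$-string survives modulo $v$; the delicate point is the final collapse, ruling out a spurious $\bar{y}_{0}$ contribution alongside the inductively-found term. This is precisely where the crystal identity $b=\tilde{f}_{i}\tilde{e}_{i}b$ coming from $\mathbf{N}_{l-1}$ (resp. $\mathbf{M}_{l-1}$) is indispensable, and where one must check carefully that the weight $\xi+\alpha_{i}$ stays in $Q_{-}(l-1)$ so that these level-$(l-1)$ statements genuinely apply.
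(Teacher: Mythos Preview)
Your proposal is correct and follows essentially the same approach as the paper. For the first assertion the paper argues by contradiction (take the largest $m$ with $y_{m}\notin L$ and apply $\tilde{e}_{i}^{m}$) while you run the equivalent explicit downward recursion; for the second assertion the paper picks directly the largest $n_{0}$ with $y_{n_{0}}\notin vL$, observes $\tilde{e}_{i}^{\,n_{0}}y\equiv y_{n_{0}}$, and then iterates $\mathbf{N}_{k}$ to get $b=\tilde{f}_{i}^{\,n_{0}}\tilde{e}_{i}^{\,n_{0}}b\equiv f_{i}^{(n_{0})}y_{n_{0}}$, which is exactly your induction on $|\xi|$ unwound into a single step.
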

\begin{proof}
By $\mathbf{A}_{l-1}$, we have $\tilde{e}_{i}^{r}y\in L(\infty)$ for all $r.$ Let $m$ be the largest integer such that $y_{m}\notin L(\infty).$ Then $\tilde{e}_{i}^{m}y=\sum_{n\geq m}f_{i}^{(n-m)}y_{n}$. Since $y_{n}\in L(\infty)$ for $n> m,$ thus we have $y_{m}\in L(\infty),$ which contradicts the choice of $m.$ Therefore, $y_{n}\in L(\infty)$ for all $n.$ A similar proof applies to $L(\lambda).$

Now suppose $y+vL(\infty)\in B(\infty).$ Let $n_{0}$ be the largest integer such that $y_{n_{0}}\notin vL(\infty).$ Then $\tilde{e}_{i}^{n_{0}}y+vL(\infty)=y_{n_{0}}+vL(\infty)\neq vL(\infty).$ By $\mathbf{N}_{k}$ for $k\leq l-1,$ $y+vL(\infty)=\tilde{f}_{i}^{n_{0}}\tilde{e}_{i}^{n_{0}}y+vL(\infty)=f_{i}^{(n_{0})}y_{n_{0}}+vL(\infty).$ The case of $L(\lambda)$ can be proved similarly.
\end{proof}

The following is a two-parameter analogue of [Kas2, Lemma 4.3.2].
\begin{lemma}Suppose $\xi, \xi'\in Q_{-}(l-1),$ $\lambda,\mu \in P_{+},$ $i\in I,$ and $b\in B(\lambda)_{\lambda+\xi},$ $b'\in B(\mu)_{\mu+\xi'}.$

$(\mathrm{i})$ $\tilde{f}_{i}(L(\lambda)_{\lambda+\xi}\otimes L(\mu)_{\mu+\xi'})\subset L(\lambda)\otimes L(\mu)$ and $\tilde{e}_{i}(L(\lambda)_{\lambda+\xi}\otimes L(\mu)_{\mu+\xi'})\subset L(\lambda)\otimes L(\mu).$

$(\mathrm{ii})$ We have the tensor product rule in $(L(\lambda)\otimes L(\mu))/v(L(\lambda)\otimes L(\mu))$ as follows:
\[\tilde{f}_{i}(b\otimes b')=\begin{cases}\tilde{f}_{i}b\otimes b'& \mathrm{if}~ \varphi_{i}(b)> \varepsilon_{i}(b'), \\ b\otimes\tilde{f}_{i}b'& \mathrm{if}~
\varphi_{i}(b)\leq \varepsilon_{i}(b');
\end{cases}\]
\[\tilde{e}_{i}(b\otimes b')=\begin{cases}\tilde{e}_{i}b\otimes b'& \mathrm{if}~ \varphi_{i}(b)\geq \varepsilon_{i}(b'), \\ b\otimes\tilde{e}_{i}b'& \mathrm{if}~
\varphi_{i}(b)< \varepsilon_{i}(b').
\end{cases}\]

$(\mathrm{iii})$ If $\tilde{e}_{i}(b\otimes b')\neq 0,$ then $b\otimes b'=\tilde{f}_{i}\tilde{e}_{i}(b\otimes b').$

$(\mathrm{iv})$ If $\tilde{e}_{i}(b\otimes b')=0$ for any $i\in I,$ then $\xi=0$ and $b=y_{\lambda}+vL(\lambda).$

$(\mathrm{v})$ We have $\tilde{f}_{i}(b\otimes y_{\mu})=\tilde{f}_{i}b\otimes y_{\mu}$ or $\tilde{f}_{i}b=0.$

$(\mathrm{vi})$ For any sequences of indices $i_{1},\ldots,i_{l},$ we have $$\tilde{f}_{i_{1}}\cdots \tilde{f}_{i_{l}}y_{\lambda}\in vL(\lambda) ~~~\mathrm{or}~~~\tilde{f}_{i_{1}}\cdots \tilde{f}_{i_{l}}(y_{\lambda}\otimes y_{\mu})\equiv \tilde{f}_{i_{1}}\cdots \tilde{f}_{i_{l}}y_{\lambda}\otimes y_{\mu}~\mathrm{mod}~vL(\lambda)\otimes L(\mu).$$
\end{lemma}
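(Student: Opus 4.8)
The plan is to prove (i) and (ii) first, since they carry all the real content, and then to deduce (iii)--(vi) as essentially formal consequences. For (i) and (ii) I would fix $i\in I$ and restrict the action on $V(\lambda)\otimes V(\mu)$ to the subalgebra $U_{v,t}(\mathfrak{g}_i)\cong U_v(\mathfrak{sl}_2)\otimes_{\mathbb{Q}(v)}\mathbb{Q}(v,t)$. Because $\xi,\xi'\in Q_-(l-1)$, the hypotheses $\mathbf{B}_{l-1}$, $\mathbf{E}_{l-1}$ and $\mathbf{G}_{l-1}$ ensure that, in the relevant weight range, $L(\lambda)_{\lambda+\xi}$ and $L(\mu)_{\mu+\xi'}$ (together with their residues in $B(\lambda)$ and $B(\mu)$) carry the structure of a crystal lattice and crystal basis for this $\mathfrak{sl}_2$, and Lemma 5.2 supplies the $i$-string decomposition of each factor. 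Since the parameter $t$ enters only through the scalar extension $\mathbb{Q}(v)\to\mathbb{Q}(v,t)$, the computation on each pair of $i$-strings is identical to the one-parameter one, so the argument of [Kas1, Proposition 6] and [Kas2, Theorem 1] applies verbatim: one verifies on each string that $\tilde{e}_i$ and $\tilde{f}_i$ preserve $L(\lambda)\otimes L(\mu)$, giving (i), and that their residues modulo $v$ obey the stated case distinction in terms of $\varphi_i(b)$ and $\varepsilon_i(b')$, giving (ii).

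From the tensor product rule (ii) I would next read off (iii), (iv) and (v). For (iii), if $\tilde{e}_i(b\otimes b')\neq 0$ then (ii) places us in one of its two branches; applying $\tilde{f}_i$ to the outcome, using $\varphi_i(\tilde{e}_ib)=\varphi_i(b)+1$ (resp. $\varepsilon_i(\tilde{e}_ib')=\varepsilon_i(b')-1$) to stay in the same branch, and invoking $b=\tilde{f}_i\tilde{e}_ib$ from $\mathbf{M}_{l-1}$, recovers $b\otimes b'$. For (iv), vanishing of $\tilde{e}_i(b\otimes b')$ for every $i$ forces the branch $\varphi_i(b)\geq\varepsilon_i(b')$ with $\tilde{e}_ib=0$, since the other branch would require $\varepsilon_i(b')=0$ together with $\varphi_i(b)<0$, which is impossible; thus $\tilde{e}_ib=0$ for all $i$, and were $\xi\neq 0$ the element $b$ would be a nonempty word $\tilde{f}_{i_1}\cdots\tilde{f}_{i_k}y_\lambda$, whence $\tilde{e}_{i_1}b\neq 0$ by $\mathbf{M}_{l-1}$, a contradiction; so $\xi=0$ and $b=y_\lambda+vL(\lambda)$. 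For (v), the highest weight vector has $\varepsilon_i(y_\mu)=0$, so the $\tilde{f}_i$-rule yields $\tilde{f}_i(b\otimes y_\mu)=\tilde{f}_ib\otimes y_\mu$ precisely when $\varphi_i(b)>0$, i.e. when $\tilde{f}_ib\neq 0$, and otherwise $\tilde{f}_ib=0$.

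Finally I would prove (vi) by induction on the length of the word, the empty word being trivial. Set $w=\tilde{f}_{i_2}\cdots\tilde{f}_{i_l}y_\lambda$ and apply the induction hypothesis. If $w\in vL(\lambda)$, then $\mathbb{Q}(v,t)$-linearity of $\tilde{f}_{i_1}$ and the stability $\tilde{f}_{i_1}L(\lambda)\subset L(\lambda)$ give $\tilde{f}_{i_1}w\in vL(\lambda)$, the first alternative. Otherwise $\tilde{f}_{i_2}\cdots\tilde{f}_{i_l}(y_\lambda\otimes y_\mu)\equiv w\otimes y_\mu$ modulo $vL(\lambda)\otimes L(\mu)$; applying $\tilde{f}_{i_1}$, which preserves the lattice by (i), and evaluating $\tilde{f}_{i_1}(w\otimes y_\mu)$ by (v), we land in one alternative or the other according to whether $\tilde{f}_{i_1}w\in vL(\lambda)$. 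The main obstacle is the $\mathfrak{sl}_2$ reduction behind (i) and (ii): one cannot simply quote Theorem 3.4, because $(L(\lambda),B(\lambda))$ is not yet known to be a crystal basis at level $l$, so the tensor product rule must be re-established at the level of lattices from the partial crystal data supplied by the level-$(l-1)$ hypotheses, which is precisely the delicate bookkeeping that drives Kashiwara's grand loop.
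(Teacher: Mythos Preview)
Your proposal is correct and follows essentially the same route as the paper: both proofs reduce (i) to the $U_{v,t}(\mathfrak{g}_i)\cong U_v(\mathfrak{sl}_2)\otimes_{\mathbb{Q}(v)}\mathbb{Q}(v,t)$ situation by using Lemma~5.2 to pass to $i$-string components $f_i^{(s)}y\otimes f_i^{(t)}z$ with $e_iy=e_iz=0$, and then invoke the $\mathfrak{sl}_2$ tensor product rule on the $\mathbf{A}$-span of these vectors, which lies inside $L(\lambda)\otimes L(\mu)$. The paper is considerably more terse for (ii)--(vi), simply stating that they ``follow immediately from (i), Lemma~5.2, and Theorem~3.4,'' whereas you spell out the case analysis for (iii)--(v) and the induction for (vi); your remark that Theorem~3.4 is being applied to the small $\mathfrak{sl}_2$ lattice rather than to the not-yet-established global crystal basis is exactly the point the paper leaves implicit.
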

\begin{proof}
$(\mathrm{i})$ By Lemma 5.2, it is enough to prove the following statement: for $y\in \mathrm{Ker}~e_{i}\cap L(\lambda)_{\lambda+\xi+n\alpha_{i}}$ and $z\in \mathrm{Ker}~e_{i}\cap L(\mu)_{\mu+\xi'+m\alpha_{i}}$ with $\langle h_{i}, \lambda+\xi+n\alpha_{i}\rangle\geq n\geq 0$ and $\langle h_{i}, \mu+\xi'+m\alpha_{i}\rangle\geq m\geq 0,$ we have $$\tilde{f}_{i}(f_{i}^{(n)}y\otimes f_{i}^{(m)}z)\subset L(\lambda)\otimes L(\mu),~~~~\tilde{e}_{i}(f_{i}^{(n)}y\otimes f_{i}^{(m)}z)\subset L(\lambda)\otimes L(\mu).$$
Let $L$ be the free $\mathbf{A}$-submodule generated by $f_{i}^{(s)}y\otimes f_{i}^{(t)}z$ $(s,t\geq 0)$, then by the tensor product rule for $U_{v,t}(\mathfrak{g_{i}})$-modules, we have $\tilde{e}_{i}L\subset L$ and $\tilde{f}_{i}L\subset L.$ Our assertion follows immediately from the fact that $L\subset L(\lambda)\otimes L(\mu).$

$(\mathrm{ii})$-$(\mathrm{vi})$. These follow immediately from $(\mathrm{i})$, Lemma 5.2, and Theorem 3.4.
\end{proof}
\begin{lemma}{\rm (Compare [Kas2, Lemma 4.3.9].)}
Let $\xi\in Q_{-}(l)$ and $y\in V(\lambda)_{\lambda+\xi},$ and $n,k\in \mathbb{Z}_{\geq 0}$ with $n+k\geq 1.$ Assume $k_{i}'^{-\nu}e_{i}^{(\nu)}y\in v_{i}^{\nu(\nu+n+k)}vL(\lambda)$ for any $1\leq \nu\leq n+k.$ Then we have $$\tilde{f}_{i}^{n}f_{i}^{(k)}y\equiv f_{i}^{(k+n)}y~\mathrm{mod}~vL(\lambda),~~~~\tilde{e}_{i}^{n}f_{i}^{(k)}y\equiv f_{i}^{(k-n)}y~\mathrm{mod}~vL(\lambda).$$
\end{lemma}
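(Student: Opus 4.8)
The plan is to reduce the whole statement to the rank-one subalgebra $U_{v,t}(\mathfrak{g}_i)\cong U_{v_i}(\mathfrak{sl}_2)\otimes_{\mathbb{Q}(v)}\mathbb{Q}(v,t)$ and then to carry out a $v$-adic estimate on the $i$-string decomposition, in the spirit of [Kas2, Lemma 4.3.9]. First I would write the $i$-string decomposition $y=\sum_{m\geq 0}f_i^{(m)}y_m$ with $y_m\in\mathrm{Ker}\,e_i\cap V(\lambda)_{\lambda+\xi+m\alpha_i}$, and observe that every operator occurring in the statement, namely $e_i^{(\nu)}$, $k_i'^{-\nu}$, $f_i^{(k)}$ and the Kashiwara operators $\tilde e_i,\tilde f_i$, acts string by string, so it suffices to analyse each $\mathfrak{sl}_2^{(i)}$-string separately. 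I would record the elementary $\mathfrak{sl}_2$ identity $e_i^{(\nu)}f_i^{(m)}y_m=t^{\nu c}\begin{bmatrix}\ell+m+\nu\\ \nu\end{bmatrix}_{v_i}f_i^{(m-\nu)}y_m$, where $\ell=\langle h_i,\lambda+\xi\rangle$ and $c=\langle i,\lambda+\xi\rangle-\langle\lambda+\xi,i\rangle$ is constant along the string because $\langle i,\alpha_i\rangle=\langle\alpha_i,i\rangle$. The crucial point of the reduction is that the $t$-power $t^{\nu c}$ produced by $e_i^{(\nu)}$ is cancelled exactly by the normalising factor $k_i'^{-\nu}$, which acts on the weight $\lambda+\xi+\nu\alpha_i$ by $v_i^{\nu(\ell+2\nu)}t^{-\nu c}$; thus $k_i'^{-\nu}e_i^{(\nu)}y=v_i^{\nu(\ell+2\nu)}\sum_{m'\geq 0}\begin{bmatrix}\ell+m'+2\nu\\ \nu\end{bmatrix}_{v_i}f_i^{(m')}y_{m'+\nu}$ carries no $t$-dependence. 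This is precisely why the hypothesis is phrased with $k_i'^{-\nu}$, and it lets me treat the entire problem as a one-parameter $U_{v_i}(\mathfrak{sl}_2)$-computation over $\mathbb{Q}(v,t)$.

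Next I would translate the hypothesis into valuation bounds on the string components. Since $\tilde e_i^{\nu}y=\sum_{m'}f_i^{(m')}y_{m'+\nu}$ differs from $k_i'^{-\nu}e_i^{(\nu)}y$ only through the scalars $v_i^{\nu(\ell+2\nu)}\begin{bmatrix}\ell+m'+2\nu\\ \nu\end{bmatrix}_{v_i}$, whose lowest-order term in $v_i$ is $v_i^{\nu(\nu-m')}$, the assumption $k_i'^{-\nu}e_i^{(\nu)}y\in v_i^{\nu(\nu+n+k)}vL(\lambda)$ for $1\leq\nu\leq n+k$ forces each component $y_m$ with $m\geq 1$ to be divisible by a controlled power of $v$; running through the admissible values of $\nu$ yields a lower bound on the $v$-valuation of $y_m$ that grows with $m$. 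In geometric terms the hypothesis says that, modulo the relevant powers of $v$, the vector $y$ is concentrated in its top string component $y_0\in\mathrm{Ker}\,e_i$.

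Finally I would compare both sides directly. From $f_i^{(k)}y=\sum_m\begin{bmatrix}m+k\\ k\end{bmatrix}_{v_i}f_i^{(m+k)}y_m$ one obtains $\tilde f_i^{n}f_i^{(k)}y=\sum_m\begin{bmatrix}m+k\\ k\end{bmatrix}_{v_i}f_i^{(m+k+n)}y_m$ and $\tilde e_i^{n}f_i^{(k)}y=\sum_m\begin{bmatrix}m+k\\ k\end{bmatrix}_{v_i}f_i^{(m+k-n)}y_m$, to be compared with $f_i^{(k\pm n)}y=\sum_m\begin{bmatrix}m+k\pm n\\ k\pm n\end{bmatrix}_{v_i}f_i^{(m+k\pm n)}y_m$. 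On the $m=0$ component all Gaussian binomials equal $1$, so the two expressions agree there exactly, which is the source of the asserted congruences. For $m\geq 1$ the difference $\begin{bmatrix}m+k\\ k\end{bmatrix}_{v_i}-\begin{bmatrix}m+k\pm n\\ k\pm n\end{bmatrix}_{v_i}$ is a Laurent polynomial in $v_i$ with a pole at $v=0$ of a definite order, and the claim reduces to checking that the valuation bound on $y_m$ extracted from the hypothesis exceeds this pole order by at least one, so that every term with $m\geq 1$ lands in $vL(\lambda)$. The main obstacle is exactly this bookkeeping: matching the pole orders of the Gaussian binomial differences against the exponent $v_i^{\nu(\nu+n+k)}$ built into the hypothesis, and it is the careful rigging of that exponent which makes the divided-power identities hold modulo $v$.
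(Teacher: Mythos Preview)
Your proposal is correct and follows essentially the same route as the paper's proof: take the $i$-string decomposition $y=\sum_m f_i^{(m)}y_m$, use the rank-one identity to turn the hypothesis on $k_i'^{-\nu}e_i^{(\nu)}y$ into the valuation estimate $v_i^{-m(n+k)}y_m\in vL(\lambda)$ for $m>0$, and then check that the Gaussian binomials appearing in $\tilde f_i^{\,n}f_i^{(k)}y$ and $f_i^{(k\pm n)}y$ have lowest $v_i$-degree at least $-m(n+k)$, so that every $m\geq1$ term lies in $vL(\lambda)$ and only the $m=0$ term survives. The paper phrases the last step as ``both sides equal $f_i^{(n+k)}y_0$ modulo $vL(\lambda)$'' rather than taking the difference, but this is the same argument; the explicit choice $\nu=\min(m,n+k)$ gives the clean bound you allude to.
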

\begin{proof}
Let $y=\sum f_{i}^{(m)}y_{m}$ be the $i$-string decomposition of $y.$ Then we have, setting $a=\langle h_{i}, \lambda+\xi\rangle,$
\begin{align*}
k_{i}'^{-\nu}e_{i}^{(\nu)}f_{i}^{(m)}y_{m}&=k_{i}'^{-\nu}f_{i}^{(m-\nu)}\left[\hspace*{-0.1cm}\begin{array}{c}\nu-m+\langle h_{i}, \lambda+\xi+m\alpha_{i}\rangle \\
\nu
\end{array}\hspace*{-0.1cm}\right]_{v_{i}}t^{\nu(\langle i, \lambda+\xi\rangle-\langle \lambda+\xi, i\rangle)}y_{m}\\
&=v_{i}^{\nu(a+2\nu)}\left[\hspace*{-0.1cm}\begin{array}{c}\nu+a+m \\
\nu
\end{array}\hspace*{-0.1cm}\right]_{v_{i}}f_{i}^{(m-\nu)}y_{m}.
\end{align*}
By Lemma 5.1, we obtain $v_{i}^{\nu(a+2\nu)}v_{i}^{-\nu(a+m)}y_{m}\in v_{i}^{\nu(\nu+n+k)}vL(\lambda)$ for $m\geq \nu$ and $1\leq \nu\leq n+k.$ Hence we obtain $v_{i}^{\nu(\nu-n-m-k)}y_{m}\in vL(\lambda)$ for $m\geq \nu$ and $1\leq \nu\leq n+k.$ Hence setting $\nu=n+k$ when $m\geq n+k$ and $\nu=m$ when $0< m< n+k,$ we obtain $v_{i}^{-m(n+k)}y_{m}\in vL(\lambda)$ for $m> 0.$

Now we have $$\tilde{f}_{i}^{n}f_{i}^{(k)}y=\sum \left[\hspace*{-0.1cm}\begin{array}{c}m+k \\
m
\end{array}\hspace*{-0.1cm}\right]_{v_{i}}f_{i}^{(n+k+m)}y_{m},~~~f_{i}^{(n+k)}y=\sum \left[\hspace*{-0.1cm}\begin{array}{c}m+n+k \\
m
\end{array}\hspace*{-0.1cm}\right]_{v_{i}}f_{i}^{(n+k+m)}y_{m}.$$
Therefore, both $\tilde{f}_{i}^{n}f_{i}^{(k)}y$ and $f_{i}^{(n+k)}y$ are equal to $f_{i}^{(n+k)}y_{0}$ modulo $vL(\lambda).$ The second equality can be proved similarly.
\end{proof}

In the sequel, for $\lambda\in P_{+},$ $`\lambda\gg 0$' means $`\langle h_{i}, \lambda\rangle\gg 0$' for all $i\in I.$
\begin{lemma}
Let $\xi\in Q_{-}(l)$ and $P\in U_{v,t}^{-}(\mathfrak{g})_{\xi}.$ Then for $\lambda\gg 0$, we have $$(\tilde{f}_{i}P)y_{\lambda}\equiv\tilde{f}_{i}(Py_{\lambda})~\mathrm{mod}~vL(\lambda)~~~\mathrm{and}~~~(\tilde{e}_{i}P)y_{\lambda}\equiv\tilde{e}_{i}(Py_{\lambda})~\mathrm{mod}~vL(\lambda).$$
\end{lemma}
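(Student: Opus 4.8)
The plan is to compare the $i$-string decomposition of $P$ inside $U_{v,t}^{-}(\mathfrak{g})$ with that of $Py_{\lambda}$ inside $V(\lambda)$, and to show they become compatible modulo $vL(\lambda)$ once $\langle h_{i},\lambda\rangle$ is large. First I would write the $i$-string decomposition $P=\sum_{n}f_{i}^{(n)}P_{n}$ with $P_{n}\in\mathrm{Ker}\,e_{i}'\cap U_{v,t}^{-}(\mathfrak{g})_{\xi+n\alpha_{i}}$ (only finitely many nonzero). Applying $\pi_{\lambda}$ gives $Py_{\lambda}=\sum_{n}f_{i}^{(n)}(P_{n}y_{\lambda})$, and by the definition of the Kashiwara operators on $U_{v,t}^{-}(\mathfrak{g})$ we have $(\tilde{f}_{i}P)y_{\lambda}=\sum_{n}f_{i}^{(n+1)}(P_{n}y_{\lambda})$ and $(\tilde{e}_{i}P)y_{\lambda}=\sum_{n}f_{i}^{(n-1)}(P_{n}y_{\lambda})$. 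Since $\tilde{f}_{i}$ and $\tilde{e}_{i}$ are $\mathbb{Q}(v,t)$-linear endomorphisms of $V(\lambda)$, it is enough to establish for each $n$ the two termwise congruences $\tilde{f}_{i}\bigl(f_{i}^{(n)}(P_{n}y_{\lambda})\bigr)\equiv f_{i}^{(n+1)}(P_{n}y_{\lambda})$ and $\tilde{e}_{i}\bigl(f_{i}^{(n)}(P_{n}y_{\lambda})\bigr)\equiv f_{i}^{(n-1)}(P_{n}y_{\lambda})$ modulo $vL(\lambda)$, and then to sum over $n$. Note that $f_{i}^{(n)}(P_{n}y_{\lambda})$ is \emph{not} the string component of $Py_{\lambda}$ in $V(\lambda)$ (since $P_{n}y_{\lambda}\notin\mathrm{Ker}\,e_{i}$ in general); this is exactly why Lemma 5.4, which computes $\tilde{f}_{i}$ and $\tilde{e}_{i}$ on $f_{i}^{(k)}y$ for a \emph{general} $y$ satisfying a smallness hypothesis, is the right tool.

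Each termwise congruence is the conclusion of Lemma 5.4 applied to $y=P_{n}y_{\lambda}\in V(\lambda)_{\lambda+\xi+n\alpha_{i}}$ with parameters ``$n$''$=1$ and ``$k$''$=n$, provided I verify its hypothesis $k_{i}'^{-\nu}e_{i}^{(\nu)}(P_{n}y_{\lambda})\in v_{i}^{\nu(\nu+1+n)}vL(\lambda)$ for $1\le\nu\le 1+n$. This is the crux. Since $e_{i}'(P_{n})=0$ and $e_{i}y_{\lambda}=0$, Corollary 4.4 gives
\[k_{i}'^{-\nu}e_{i}^{\nu}(P_{n}y_{\lambda})=\frac{v_{i}^{\nu(2\langle h_{i},\lambda+\xi+n\alpha_{i}\rangle+3\nu+1)}}{(v_{i}-v_{i}^{-1})^{\nu}}\,(e_{i}''^{\nu}P_{n})y_{\lambda},\]
so that $k_{i}'^{-\nu}e_{i}^{(\nu)}(P_{n}y_{\lambda})$ is this expression divided by $[\nu]_{v_{i}}^{!}$. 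By Proposition 4.2, $e_{i}'e_{i}''^{\nu}=v^{\nu\,i\cdot i}e_{i}''^{\nu}e_{i}'$, whence $e_{i}''^{\nu}P_{n}\in\mathrm{Ker}\,e_{i}'\cap U_{v,t}^{-}(\mathfrak{g})_{\xi+(n+\nu)\alpha_{i}}$; this weight either lies outside $Q_{-}$, forcing $e_{i}''^{\nu}P_{n}=0$, or has height $|\xi|-(n+\nu)\le l-1$ because $\nu\ge1$. In the nontrivial case $e_{i}''^{\nu}P_{n}$ is a fixed element of $U_{v,t}^{-}(\mathfrak{g})$ independent of $\lambda$, so by $\mathbf{D}_{l-1}$ I may write $e_{i}''^{\nu}P_{n}\in v^{-N}L(\infty)$ for some integer $N$ independent of $\lambda$, and then $\mathbf{C}_{l-1}$ yields $(e_{i}''^{\nu}P_{n})y_{\lambda}=\pi_{\lambda}(e_{i}''^{\nu}P_{n})\in v^{-N}L(\lambda)$ for every $\lambda$. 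Now the only $\lambda$-dependent factor in the coefficient above is the numerator $v_{i}^{\nu(2\langle h_{i},\lambda+\xi+n\alpha_{i}\rangle+3\nu+1)}$, whose order at $v=0$ tends to $+\infty$ as $\langle h_{i},\lambda\rangle\to\infty$, while the factors $(v_{i}-v_{i}^{-1})^{-\nu}$ and $1/[\nu]_{v_{i}}^{!}$, together with the clearing exponent $N$, contribute a $v$-adic order bounded below independently of $\lambda$. Hence for $\lambda\gg0$ the total $v$-adic order of $k_{i}'^{-\nu}e_{i}^{(\nu)}(P_{n}y_{\lambda})$ exceeds the threshold defining membership in $v_{i}^{\nu(\nu+1+n)}vL(\lambda)$; since $P$ has only finitely many nonzero $P_{n}$ and $\nu$ ranges over a finite set, a single $\lambda\gg0$ works for all relevant pairs $(n,\nu)$ simultaneously.

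With the hypothesis verified, Lemma 5.4 yields both termwise congruences, and summing over $n$ (using the $\mathbb{Q}(v,t)$-linearity of $\tilde{f}_{i}$ and $\tilde{e}_{i}$) gives $(\tilde{f}_{i}P)y_{\lambda}\equiv\tilde{f}_{i}(Py_{\lambda})$ and $(\tilde{e}_{i}P)y_{\lambda}\equiv\tilde{e}_{i}(Py_{\lambda})$ modulo $vL(\lambda)$, as required. I expect the main obstacle to be the valuation bookkeeping in the second paragraph: one must check that the $\lambda$-dependent power of $v$ produced by Corollary 4.4 genuinely dominates, uniformly in $n$ and $\nu$, the fixed negative contribution $N$ needed to clear denominators in $(e_{i}''^{\nu}P_{n})y_{\lambda}$. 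The remaining ingredients---the string decomposition, the identification of $(\tilde{f}_{i}P)y_{\lambda}$ and $(\tilde{e}_{i}P)y_{\lambda}$, and the final summation---are formal consequences of the definitions and Lemma 5.4.
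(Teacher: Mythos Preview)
Your proposal is correct and follows essentially the same route as the paper's proof: both reduce to a single $i$-string component (the paper by saying ``we may assume $P=f_{i}^{(k)}R$ with $e_{i}'R=0$'', you by decomposing and working termwise), then invoke Corollary~4.4 to check the smallness hypothesis of Lemma~5.4 for $\lambda\gg0$, and conclude. Your write-up is considerably more explicit about the valuation bookkeeping than the paper, which simply asserts $k_{i}'^{-\nu}e_{i}^{(\nu)}Ry_{\lambda}\in v_{i}^{\nu(\nu+k+1)}vL(\lambda)$; one small remark is that the fact you label as ``$\mathbf{D}_{l-1}$'' is really the statement that $L(\infty)_{\xi'}$ is a full $\mathbf{A}$-lattice in $U_{v,t}^{-}(\mathfrak{g})_{\xi'}$ for $|\xi'|\le l-1$, which is an elementary consequence of the inductive hypotheses rather than $\mathbf{D}_{l-1}$ itself.
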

\begin{proof}
We may assume $P=f_{i}^{(k)}R$ with $e_{i}'R=0$ and $R\in U_{v,t}^{-}(\mathfrak{g})_{\xi+k\alpha_{i}}.$ Then $(\tilde{f}_{i}P)y_{\lambda}=f_{i}^{(k+1)}Ry_{\lambda}$ and $(\tilde{e}_{i}P)y_{\lambda}=f_{i}^{(k-1)}Ry_{\lambda}.$ By Corollary 4.4, we have $$k_{i}'^{-\nu}e_{i}^{(\nu)}Ry_{\lambda}\in v_{i}^{\nu(\nu+k+1)}vL(\lambda)~~~\mathrm{for}~1\leq \nu \leq 1+k.$$ Then the lemma follows from Lemma 5.4.
\end{proof}

Let us denote by $L(\lambda)^{*}$ and $L(\infty)^{*}$ the dual lattice of $L(\lambda)$ and $L(\infty)$ with respect to the inner product defined in Proposition 3.6 and 4.8, respectively. This means $$L(\lambda)^{*}=\{y\in V(\lambda); (y, L(\lambda))\subset \mathbf{A}\},~~~~L(\infty)^{*}=\{y\in U_{v,t}^{-}(\mathfrak{g}); (y, L(\infty))\subset \mathbf{A}\}.$$ Similarly, we can define $L(\lambda)_{\lambda+\xi}^{*}$ and $L(\infty)_{\xi}^{*}.$
\begin{lemma}
For $\xi=-\sum n_{i}\alpha_{i}\in Q_{-}$ and $P, Q\in U_{v,t}^{-}(\mathfrak{g})_{\xi},$ there exists a polynomial $f(p_{1},\ldots,p_{n})$ in $p=(p_{i})_{i\in I}$ with coefficients in $\mathbb{Q}(v, t)$ such that $$(Py_{\lambda}, Qy_{\lambda})=f(p)~~~\mathrm{with}~p_{i}=v_{i}^{2\langle h_{i}, \lambda\rangle},~~\mathrm{and}$$$$f(0)=\Big(\prod(1-v_{i}^{2})^{-n_{i}}\Big)(P,Q).$$
\end{lemma}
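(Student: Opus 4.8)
The plan is to prove the existence of the polynomial $f$ and evaluate $f(0)$ by induction on the height $|\xi|=\sum_{i}n_{i}$. The base case $\xi=0$ is trivial: both $P$ and $Q$ are scalars, $Py_\lambda$ and $Qy_\lambda$ are scalar multiples of $y_\lambda$, and the claim reduces to $(y_\lambda,y_\lambda)=1$, so $f$ is the constant $(P,Q)$ and the empty product convention gives $f(0)=(P,Q)$. For the inductive step I would first reduce to the case where $P$ is a monomial $P=f_{i}P'$ with $P'\in U_{v,t}^{-}(\mathfrak{g})_{\xi+\alpha_i}$, since such elements span $U_{v,t}^{-}(\mathfrak{g})_\xi$ and the statement is bilinear in $P$ (and separately $Q$).

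The main computation is to move the leading $f_i$ across the polarization. Using the adjunction $(f_i x, y)=(x, e_i' y)$ from Proposition 4.8 — or rather its analogue on $V(\lambda)$ coming from Proposition 3.6 together with the identification $Py_\lambda$ — I would write
\[
(Py_\lambda, Qy_\lambda)=(f_i P' y_\lambda, Qy_\lambda)=(P' y_\lambda, e_i'(Qy_\lambda)),
\]
where here $e_i'$ acts through the $U_{v,t}(\mathfrak{g})$-module structure on $V(\lambda)$ in the manner dictated by the commutation relation (R3). The key step is to express $e_i'$ acting on $Qy_\lambda$ in terms of the intrinsic operator $e_i'$ on $U_{v,t}^{-}(\mathfrak{g})$ from Lemma 4.1, picking up a factor that depends on $\lambda$ only through $v_i^{2\langle h_i,\lambda\rangle}$; this is exactly the kind of dependence controlled by Corollary 4.4. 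Concretely, I expect $e_i'(Qy_\lambda)=\big(\text{something involving } p_i=v_i^{2\langle h_i,\lambda\rangle}\big)\cdot (e_i'Q)y_\lambda$ plus lower-order terms, so that
\[
(Py_\lambda,Qy_\lambda)=\sum_{j}\,c_j(p)\,(P' y_\lambda,\,(e_j'' Q)y_\lambda),
\]
with each $c_j(p)$ a Laurent-polynomial in the $p_i$. Applying the induction hypothesis to the pairings on the right (which live in weight $\xi+\alpha_i$, of strictly smaller height) shows each is a polynomial in $p$, hence so is $(Py_\lambda,Qy_\lambda)$.

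For the evaluation at $p=0$, I would track only the leading term. Setting $p_i=v_i^{2\langle h_i,\lambda\rangle}\to 0$ corresponds to $\lambda\to\infty$, and the dominant contribution should come from the ``diagonal'' term where $e_i'$ hits the leading $f_i$ of $Q=f_iQ'$, producing the factor $(1-v_i^2)^{-1}$ after normalizing, with the cross terms vanishing at $p=0$. Thus I expect
\[
f(0)=(1-v_i^2)^{-1}\,g(0),\qquad g(0)=\Big(\prod_{j}(1-v_j^2)^{-n_j'}\Big)(P',Q'),
\]
where $n'$ is the exponent vector of $\xi+\alpha_i$, and collecting the factors reproduces $\big(\prod(1-v_i^2)^{-n_i}\big)(P,Q)$ by the inductive identity together with the symmetry and adjunction properties of the polarization in Proposition 4.8. \textbf{The main obstacle} will be bookkeeping the precise $v_i$- and $t$-powers when commuting $e_i'$ past $k_i'^{-1}$ and the weight operators using (R2)--(R3): one must verify that all $t$-dependent prefactors from Corollary 4.4 are actually \emph{independent} of $\lambda$ (so they do not interfere with the variable $p$) and that only the single factor $(1-v_i^2)^{-1}$ survives at $p=0$. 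Establishing that the contribution of $e_i''$-terms other than the leading one vanishes at $p=0$ is the crux, and it relies on the observation from Corollary 4.4 that each extra application of $e_i$ brings a factor of $v_i^{\,\nu(\nu+\cdots)}$ that forces the corresponding term into $v_i^{2\langle h_i,\lambda\rangle}\mathbf{A}=p_i\mathbf{A}$.
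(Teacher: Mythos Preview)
Your overall strategy --- induction on $|\xi|$, peeling off a leading $f_i$ via the polarization on $V(\lambda)$, and converting the resulting $e_i$ into the algebraic operators $e_i',e_i''$ of Lemma~4.1 --- is exactly the paper's approach (the paper decomposes $Q=f_iR$ rather than $P=f_iP'$, which is immaterial by symmetry). However, two points in your write-up are genuinely garbled and would derail the computation if followed literally.

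First, after moving $f_i$ across there is no sum over $j\in I$, and the expansion is not in $e_j''$-terms alone. Using Proposition~3.6 (the adjoint of $f_i$ on $V(\lambda)$ is $v_i^{-1}k_i'^{-1}e_i$, not $e_i'$) together with Lemma~4.1 gives, for $P=f_iP'$,
\[
(Py_\lambda,Qy_\lambda)=(1-v_i^2)^{-1}\bigl(P'y_\lambda,\,e_i'(Q)y_\lambda\bigr)\;-\;v_i^{\,2\langle h_i,\lambda+\xi+\alpha_i\rangle}(1-v_i^2)^{-1}\bigl(P'y_\lambda,\,e_i''(Q)y_\lambda\bigr),
\]
with the single fixed index $i$ and both $e_i'(Q)$ and $e_i''(Q)$ appearing. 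The second term carries the factor $v_i^{2\langle h_i,\lambda\rangle}\cdot v_i^{2\langle h_i,\xi+\alpha_i\rangle}=p_i\cdot(\text{constant in }\mathbb{Q}(v,t))$, which is what makes it vanish at $p=0$; Corollary~4.4 is not needed here, only Lemma~4.1.

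Second, in your evaluation of $f(0)$ you invoke a decomposition $Q=f_iQ'$ for the \emph{same} $i$ you used for $P$. This is neither available in general nor required. The term surviving at $p=0$ is $(1-v_i^2)^{-1}$ times the inductive value of $(P'y_\lambda,e_i'(Q)y_\lambda)$, and one closes the loop directly via the adjunction on $U_{v,t}^{-}$ from Proposition~4.8:
\[
(P',e_i'(Q))=(f_iP',Q)=(P,Q).
\]
There is no $Q'$ in the argument; drop it, and the inductive bookkeeping goes through exactly as in the paper.
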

\begin{proof}
We shall prove it by the induction on $|\xi|.$ If $|\xi|=0,$ it is obvious. When $|\xi|> 0,$ we may assume $Q=f_{i}R$ with $R\in U_{v,t}^{-}(\mathfrak{g})_{\xi+\alpha_{i}}.$
\begin{align*}
(Py_{\lambda}, Qy_{\lambda})&=v_{i}^{-1}(k_{i}'^{-1}e_{i}Py_{\lambda}, Ry_{\lambda})\\
&=v_{i}^{-1}\Big(\frac{k_{i}'^{-1}k_{i}e_{i}''(P)-e_{i}'(P)}{v_{i}-v_{i}^{-1}}y_{\lambda}, Ry_{\lambda}\Big)\\
&=(1-v_{i}^{2})^{-1}(e_{i}'(P)y_{\lambda}, Ry_{\lambda})-v_{i}^{2\langle h_{i}, \lambda+\xi+\alpha_{i}\rangle}(1-v_{i}^{2})^{-1}(e_{i}''(P)y_{\lambda}, Ry_{\lambda}).
\end{align*}
Hence the first equality follows. The second equality follows from $(P,Q)=(e_{i}'(P), R).$
\end{proof}

\begin{lemma}
For $\lambda\gg 0,$ we have $\pi_{\lambda}(L(\infty)_{\xi}^{*})=L(\lambda)_{\lambda+\xi}^{*}$ for any $\xi\in Q_{-}(l).$
\end{lemma}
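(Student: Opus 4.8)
The plan is to transport everything through the comparison of polarizations furnished by Lemma 5.6, and to reduce the assertion to a statement about a single lattice carrying two bilinear forms. For $\lambda\gg 0$ the map $\pi_\lambda$ restricts to a $\mathbb{Q}(v,t)$-linear isomorphism $U_{v,t}^{-}(\mathfrak{g})_{\xi}\xrightarrow{\sim}V(\lambda)_{\lambda+\xi}$, and by $\mathbf{C}_l$ it carries the lattice $L(\infty)_{\xi}$ onto $L(\lambda)_{\lambda+\xi}$. Pulling the polarization of $V(\lambda)$ back along $\pi_\lambda$ produces a second nondegenerate symmetric form $B_\lambda(P,Q):=(Py_{\lambda},Qy_{\lambda})$ on the finite-dimensional space $V:=U_{v,t}^{-}(\mathfrak{g})_{\xi}$, to be compared with the form $B_\infty:=(\cdot,\cdot)$ of Proposition 4.8 (nondegenerate by Corollary 4.9). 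Pulling back by $\pi_\lambda^{-1}$, the desired equality $\pi_\lambda(L(\infty)_{\xi}^{*})=L(\lambda)_{\lambda+\xi}^{*}$ becomes the assertion that $L(\infty)_{\xi}$ has the same dual lattice with respect to $B_\infty$ as with respect to $B_\lambda$; so it suffices to prove that these two dual lattices coincide.

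Next I would pass to Gram matrices. Fix an $\mathbf{A}$-basis $P_1,\dots,P_N$ of $L(\infty)_{\xi}$ and set $G:=(B_\infty(P_a,P_b))_{a,b}$ and $G_\lambda:=(B_\lambda(P_a,P_b))_{a,b}$; both lie in $GL_N(\mathbb{Q}(v,t))$ by nondegeneracy. In coordinates the dual lattice of $L(\infty)_{\xi}$ with respect to a form is $G^{-1}\mathbf{A}^{N}$ (resp. $G_\lambda^{-1}\mathbf{A}^{N}$), so the equality of dual lattices is exactly the statement that the transition matrix $G_\lambda G^{-1}$ lies in $GL_N(\mathbf{A})$. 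Here Lemma 5.6 enters decisively: writing $\xi=-\sum_i n_i\alpha_i$ and $u:=\prod_i(1-v_i^{2})^{-n_i}$, it gives $B_\lambda(P_a,P_b)=f_{ab}(p)$ for a polynomial $f_{ab}$ in $p=(p_i)$ with $p_i=v_i^{2\langle h_i,\lambda\rangle}$ and $f_{ab}(0)=u\,B_\infty(P_a,P_b)$. Note that $u\in\mathbf{A}^{\times}$, since each $1-v_i^{2}=1-v^{\,i\cdot i}$ takes the value $1$ at $v=0$. Thus $G_\lambda=uG+R_\lambda$, where every entry of $R_\lambda$ is a sum of monomials $c_{\mathbf m}\,p^{\mathbf m}$ with $|\mathbf m|\ge 1$ and fixed coefficients $c_{\mathbf m}\in\mathbb{Q}(v,t)$.

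The heart of the argument---and the step I expect to be the main obstacle---is the $v$-adic estimate controlling $R_\lambda$. Because the finitely many coefficients $c_{\mathbf m}$ and the entries of the fixed matrix $G^{-1}$ have pole order at $v=0$ bounded by a constant depending only on $\xi$, while every monomial $p^{\mathbf m}$ with $|\mathbf m|\ge 1$ is divisible by $v^{\langle h_i,\lambda\rangle\,(i\cdot i)}$ for some $i$, choosing $\lambda\gg 0$ forces $R_\lambda\in v^{M}M_N(\mathbf{A})$ with $M$ as large as we wish. Consequently $R_\lambda G^{-1}\in vM_N(\mathbf{A})$ once $\lambda$ is large enough to overcome the poles of $G^{-1}$, whence $G_\lambda G^{-1}=uI+R_\lambda G^{-1}\equiv uI\pmod{vM_N(\mathbf{A})}$. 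This matrix has entries in $\mathbf{A}$ and determinant $\equiv u^{N}\not\equiv 0\pmod{v}$, so it is invertible over the local ring $\mathbf{A}$; that is, $G_\lambda G^{-1}\in GL_N(\mathbf{A})$, which is precisely the required equality of dual lattices. The one delicate point is the \emph{uniformity} of the pole-order bounds across the chosen basis, as this is exactly what permits a single threshold ``$\lambda\gg 0$'' to work simultaneously for all entries, and hence for the whole matrix identity.
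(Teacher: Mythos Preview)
Your argument is correct and is essentially the paper's own proof, just phrased in terms of Gram matrices rather than dual bases: the paper picks the $B_\infty$-dual basis $\{Q_j\}$ of $\{P_k\}$ and uses Lemma~5.6 to get $(P_ky_\lambda,Q_jy_\lambda)\equiv\delta_{kj}\pmod{v\mathbf{A}}$ for $\lambda\gg0$, which is exactly your statement that $G_\lambda G^{-1}\equiv uI\pmod{vM_N(\mathbf{A})}$ since $(P_ky_\lambda,Q_jy_\lambda)=(G_\lambda G^{-1})_{kj}$. The only cosmetic difference is that the paper's choice of dual basis hides the matrix inversion and the pole-order bookkeeping that you (correctly) spell out.
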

\begin{proof}
Let $\{P_{k}\}$ be an $\mathbf{A}$-basis of $L(\infty)_{\xi}$ and $\{Q_{k}\}$ be the dual basis such that $(P_{k}, Q_{j})=\delta_{kj}.$ Then we have $L(\infty)_{\xi}^{*}=\sum_{j} \mathbf{A}Q_{j}.$ We also have $L(\lambda)_{\lambda+\xi}=\sum_{k}  \mathbf{A}P_{k}y_{\lambda}.$ By Lemma 5.6, we have $(P_{k}y_{\lambda}, Q_{j}y_{\lambda})\equiv \delta_{kj}~\mathrm{mod}~v\mathbf{A}$ for the choice of $\lambda.$ Hence we conclude $L(\lambda)_{\lambda+\xi}^{*}=\sum_{j} \mathbf{A}Q_{j}y_{\lambda}=\pi_{\lambda}(L(\infty)_{\xi}^{*}).$
\end{proof}

Using Lemma 5.7, we can also prove the following proposition by an argument similar to the proof of [Kas2, Proposition 4.7.3].
\begin{proposition}
Let $\mu\gg 0$ and $\xi\in Q_{-}(l)$. If $\lambda\in P_{+},$ then we have $$\Psi_{\lambda, \mu}((L(\lambda)\otimes L(\mu))_{\lambda+\mu+\xi})\subset L(\lambda+\mu)_{\lambda+\mu+\xi}.$$
\end{proposition}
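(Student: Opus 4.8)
The plan is to prove the inclusion by duality, in the spirit of [Kas2, Proposition 4.7.3], using the adjunction $(\Psi_{\lambda,\mu}(x),w)=(x,\Phi_{\lambda,\mu}(w))$ recorded at the end of Section 3 together with the nondegenerate polarizations of Proposition 3.6 and Corollary 4.9. Since $\mathbf{A}$ is a principal ideal domain and each polarization is nondegenerate, every finitely generated lattice is reflexive, i.e. $L=(L^{*})^{*}$. Hence, for $x\in(L(\lambda)\otimes L(\mu))_{\lambda+\mu+\xi}$, one has $\Psi_{\lambda,\mu}(x)\in L(\lambda+\mu)$ if and only if $(\Psi_{\lambda,\mu}(x),w)\in\mathbf{A}$ for every $w\in L(\lambda+\mu)^{*}_{\lambda+\mu+\xi}$, which by the adjunction equals $(x,\Phi_{\lambda,\mu}(w))$. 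Using $(L(\lambda)\otimes L(\mu))^{*}=L(\lambda)^{*}\otimes L(\mu)^{*}$, the whole proposition reduces to the single inclusion
\[
\Phi_{\lambda,\mu}\big(L(\lambda+\mu)^{*}_{\lambda+\mu+\xi}\big)\subset\big(L(\lambda)^{*}\otimes L(\mu)^{*}\big)_{\lambda+\mu+\xi}.
\]

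The next step is to feed in Lemma 5.7. Because $\mu\gg0$ also forces $\lambda+\mu\gg0$, Lemma 5.7 identifies $L(\lambda+\mu)^{*}_{\lambda+\mu+\xi}=\pi_{\lambda+\mu}(L(\infty)^{*}_{\xi})$ and $L(\mu)^{*}_{\mu+\zeta}=\pi_{\mu}(L(\infty)^{*}_{\zeta})$ for $\zeta\in Q_{-}(l)$. Thus a typical element of $L(\lambda+\mu)^{*}_{\lambda+\mu+\xi}$ has the form $w\,y_{\lambda+\mu}$ with $w\in L(\infty)^{*}_{\xi}$, and the $U_{v,t}(\mathfrak{g})$-linearity of $\Phi_{\lambda,\mu}$ gives $\Phi_{\lambda,\mu}(w\,y_{\lambda+\mu})=w\cdot(y_{\lambda}\otimes y_{\mu})$. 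Expanding the iterated coproduct as $\Delta(w)=\sum w_{(1)}\otimes w_{(2)}$, where $w_{(1)}$ lies in $U_{v,t}^{-}(\mathfrak{g})_{\eta}\,U_{v,t}^{0}(\mathfrak{g})$, $w_{(2)}\in U_{v,t}^{-}(\mathfrak{g})_{\zeta}$ and $\eta+\zeta=\xi$, the Cartan factors act by scalars on $y_{\lambda}$, so $w\cdot(y_{\lambda}\otimes y_{\mu})$ is a sum of terms $(\text{scalar})\,(a_{\eta}y_{\lambda})\otimes(w_{(2)}y_{\mu})$ with $a_{\eta}\in U_{v,t}^{-}(\mathfrak{g})_{\eta}$. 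The claim then splits into two factor-wise membership assertions: $w_{(2)}y_{\mu}=\pi_{\mu}(w_{(2)})\in L(\mu)^{*}$, which follows from Lemma 5.7 and $\mu\gg0$ once $w_{(2)}\in L(\infty)^{*}_{\zeta}$, and $a_{\eta}y_{\lambda}\in L(\lambda)^{*}$.

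The main obstacle is the second factor. Here $\lambda$ is an arbitrary dominant weight, \emph{not} assumed large, so Lemma 5.7 gives no description of $L(\lambda)^{*}$, and one cannot read off $a_{\eta}y_{\lambda}\in L(\lambda)^{*}$ directly from $w\in L(\infty)^{*}$; simultaneously one must know that the coproduct respects the dual lattices, namely that $\Delta$ carries $L(\infty)^{*}_{\xi}$ into $\sum_{\eta+\zeta=\xi}L(\infty)^{*}_{\eta}\,U_{v,t}^{0}(\mathfrak{g})\otimes L(\infty)^{*}_{\zeta}$. I would handle both by treating this inclusion as one more strand of the grand loop and inducting on $|\xi|$: when $|\eta|,|\zeta|<|\xi|$ the two factors are controlled by the hypotheses $\mathbf{H}_{l-1},\dots$ already in force, while the $v$-adic estimate of Lemma 5.6, whose value at $v=0$ is $\big(\prod(1-v_{i}^{2})^{-n_{i}}\big)(P,Q)$, pins down the order at $v=0$ needed to place $a_{\eta}y_{\lambda}$ in $L(\lambda)^{*}$. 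The sharpest point is the boundary case in which the full weight $\xi$ is carried by the $\mu$-side, i.e. terms $y_{\lambda}\otimes(w\,y_{\mu})$ with $|\zeta|=|\xi|$; there the induction on $|\xi|$ yields no reduction, and one must invoke $\mu\gg0$ and Lemma 5.7 directly, together with $\tilde{f}_{i}\circ\Psi_{\lambda,\mu}=\Psi_{\lambda,\mu}\circ\tilde{f}_{i}$ and $\Psi_{\lambda,\mu}(y_{\lambda}\otimes y_{\mu})=y_{\lambda+\mu}$, to conclude.
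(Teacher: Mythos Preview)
Your reduction via the adjunction $(\Psi_{\lambda,\mu}(x),w)=(x,\Phi_{\lambda,\mu}(w))$ and reflexivity, together with the use of Lemma~5.7 to write $L(\lambda+\mu)^*_{\lambda+\mu+\xi}=\pi_{\lambda+\mu}(L(\infty)^*_\xi)$ when $\mu\gg0$, is exactly the opening move of the argument the paper is invoking from [Kas2, Proposition~4.7.3]. So the strategy is right.

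The gap is precisely where you locate it, and your proposed repair does not close it. You reduce to showing $(u\otimes v,\,Q(y_\lambda\otimes y_\mu))\in\mathbf{A}$ for $u\in L(\lambda)$, $v\in L(\mu)$, $Q\in L(\infty)^*_\xi$, and then try to split this via the coproduct into a $\lambda$-factor and a $\mu$-factor. Two things fail. First, the assertion that $\Delta$ sends $L(\infty)^*_\xi$ into $\sum_{\eta+\zeta=\xi}L(\infty)^*_\eta\,U_{v,t}^0\otimes L(\infty)^*_\zeta$ is neither available at this point of the loop nor proved in your sketch; ``one more strand of the grand loop'' is a hope, not an argument. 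Second, and more seriously, even granting that compatibility you would still need $\pi_\lambda(L(\infty)^*_\eta)\subset L(\lambda)^*$ for an \emph{arbitrary} dominant $\lambda$, and Lemma~5.6 does not provide this: it says $(Py_\lambda,Qy_\lambda)=f(p)$ with $f(0)=\big(\prod(1-v_i^2)^{-n_i}\big)(P,Q)$, but the higher coefficients of the polynomial $f$ lie only in $\mathbb{Q}(v,t)$ and may have poles at $v=0$, so evaluating at the fixed $p_i=v_i^{2\langle h_i,\lambda\rangle}$ for small $\lambda$ gives no $\mathbf{A}$-bound. Knowing the constant term of $f$ controls nothing about $f(p)$ at such $p$.

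The argument in [Kas2, \S4.7] that the paper is quoting does not attempt to place each tensor leg separately into a dual lattice. It stays with the scalar $(u\otimes v,\,Q(y_\lambda\otimes y_\mu))$ as a whole and estimates it directly, so that the awkward $L(\lambda)^*$ for small $\lambda$ never appears. If you want to complete the proof along the paper's lines, you should abandon the term-by-term coproduct splitting and instead work with the pairing globally, using only $\mu\gg0$ (hence $\lambda+\mu\gg0$) via Lemma~5.7 and the asymptotic comparison of Lemma~5.6 on the $\mu$- and $(\lambda+\mu)$-sides.
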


The other parts of Kashiwara's grand loop argument work equally well in our two-parameter setting. Thus, Kashiwara's grand loop argument in [Kas2, $\S4$] together with the above modifications gets through, and we have established $\mathbf{A}_{l},\ldots,\mathbf{N}_{l}$ and completed the proof of Theorem 5.1.

\section{Properties of polarization}
Recall that we have introduced the polarizations on $V(\lambda)$ and $U_{v,t}^{-}(\mathfrak{g})$ respectively; see Proposition 3.6 and 4.8. In this section we shall investigate the properties of crystal bases with respect to the polarizations.

\begin{proposition}
Let $\lambda\in P_{+}.$

$(\mathrm{i})$ $(L(\lambda), L(\lambda))\subset \mathbf{A},$ and so it descends to a bilinear form $$(\cdot, \cdot)_{0} : L(\lambda)/vL(\lambda)\times L(\lambda)/vL(\lambda)\rightarrow \mathbb{Q}(t),~~(x+vL(\lambda), y+vL(\lambda))_{0}=(x,y)|_{v=0}.$$

$(\mathrm{ii})$ $(\tilde{e}_{i}x, y)_{0}=(x, \tilde{f}_{i}y)_{0}$ for $x,y\in L(\lambda)/vL(\lambda).$

$(\mathrm{iii})$ $B(\lambda)$ is an orthonormal basis with respect to $(\cdot, \cdot)_{0}.$ In particular, $(\cdot, \cdot)_{0}$ is positive definite.

$(\mathrm{iv})$ $L(\lambda)=\{x\in V(\lambda); (x, L(\lambda))\subset \mathbf{A}\}.$
\end{proposition}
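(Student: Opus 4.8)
The plan is to reduce the entire statement, for each fixed $i$, to a rank-one computation along the $i$-strings, exactly as in [Kas2]; the only genuinely new point is to check that the extra $t$-powers carried by the two-parameter relations (3.1) cancel, so that the resulting congruences live in $\mathbf{A}=\mathbb{Q}(t)[v]_{(v)}$ and give honest orthonormality (with coefficient $1$, not some $t^{c}$) at $v=0$. First I would establish the key rank-one estimate. Let $u\in\mathrm{Ker}\,e_{i}\cap V(\lambda)_{\mu}$ with $a=\langle h_{i},\mu\rangle\geq 0$. Using $f_{i}^{(n)}u=\tfrac{1}{[n]_{v_{i}}}f_{i}f_{i}^{(n-1)}u$, the adjunction $(f_{i}x,y)=(x,v_{i}^{-1}k_{i}'^{-1}e_{i}y)$ of (3.1), the $\mathfrak{sl}_{2}$-relation $e_{i}f_{i}^{(n)}=f_{i}^{(n)}e_{i}+f_{i}^{(n-1)}\tfrac{v_{i}^{1-n}k_{i}-v_{i}^{n-1}k_{i}'}{v_{i}-v_{i}^{-1}}$, and $e_{i}u=0$, one obtains for $1\leq n\leq a$
$$(f_{i}^{(n)}u,f_{i}^{(n)}u)=\frac{v_{i}^{2a-2n+2}-1}{v_{i}^{2n}-1}\,(f_{i}^{(n-1)}u,f_{i}^{(n-1)}u).$$
The point is that $u$ and $f_{i}^{(n-1)}u$ share the same $t$-exponent $\langle i,\mu\rangle-\langle\mu,i\rangle$ in their $k_{i}'$-eigenvalues, so the $t$-power from $k_{i}'$ in the commutator and the $t$-power from $k_{i}'^{-1}$ in the adjunction cancel, leaving no $t$. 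The displayed factor lies in $\mathbf{A}$ and equals $1$ at $v=0$, so by induction $(f_{i}^{(n)}u,f_{i}^{(n)}u)\equiv(u,u)\ \mathrm{mod}\ v\mathbf{A}$. Moreover $(u,f_{i}^{(k)}w)=(f_{i}^{(k)}w,u)=0$ for $k\geq1$ since $e_{i}u=0$; with the orthogonality of distinct weight spaces this shows different $i$-strings are orthogonal and reduces every pairing to this diagonal case. Part (i) follows: writing $x,x'\in L(\lambda)$ via the string decomposition $L(\lambda)=\bigoplus_{n}f_{i}^{(n)}(\mathrm{Ker}\,e_{i}\cap L(\lambda))$ (Lemma 3.2(a), with components in $L(\lambda)$ by Lemma 3.3(a)) and inducting on the height of the weight, each $(x,x')$ is an $\mathbf{A}$-combination of pairings at strictly smaller height, with base case $(cy_{\lambda},c'y_{\lambda})=cc'$.

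For (ii), since $\tilde{e}_{i}(f_{i}^{(n)}u)=f_{i}^{(n-1)}u$ and $\tilde{f}_{i}(f_{i}^{(n-1)}u)=f_{i}^{(n)}u$, the congruence $(f_{i}^{(n)}u,f_{i}^{(n)}u)\equiv(f_{i}^{(n-1)}u,f_{i}^{(n-1)}u)$ mod $v\mathbf{A}$ is exactly $(\tilde{e}_{i}x,y)_{0}=(x,\tilde{f}_{i}y)_{0}$ on the diagonal, while the off-diagonal terms vanish by the string orthogonality above. For (iii) I induct on the height of $\xi$. If $\xi=0$ then $b=y_{\lambda}$ and $(y_{\lambda},y_{\lambda})_{0}=1$. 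If $\xi\neq0$, choose $i$ with $\tilde{e}_{i}b\neq0$ (possible since $\xi\neq0$); then $\tilde{f}_{i}\tilde{e}_{i}b=b$ by axiom (7), and (ii) gives $(b,b')_{0}=(\tilde{f}_{i}\tilde{e}_{i}b,b')_{0}=(\tilde{e}_{i}b,\tilde{e}_{i}b')_{0}$. The pair $\tilde{e}_{i}b,\tilde{e}_{i}b'$ sits at strictly smaller height, and since $\tilde{e}_{i}$ is injective where it is nonzero (again by (7)), the induction hypothesis forces $(\tilde{e}_{i}b,\tilde{e}_{i}b')_{0}=\delta_{\tilde{e}_{i}b,\tilde{e}_{i}b'}=\delta_{b,b'}$. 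Positive definiteness is then immediate from orthonormality.

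Finally (iv). By (i), $L(\lambda)\subset\{x\in V(\lambda);(x,L(\lambda))\subset\mathbf{A}\}$. For the reverse inclusion, pick $G_{b}\in L(\lambda)$ with $G_{b}\equiv b$ mod $vL(\lambda)$ for $b\in B(\lambda)$, so $\{G_{b}\}$ is an $\mathbf{A}$-basis of $L(\lambda)$ and a $\mathbb{Q}(v,t)$-basis of $V(\lambda)$. By (iii) the Gram matrix $\big((G_{b},G_{b'})\big)$ is congruent to the identity mod $v\mathbf{A}$, so its determinant is a unit of the local ring $\mathbf{A}$ and the matrix is invertible over $\mathbf{A}$. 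If $x=\sum_{b}c_{b}G_{b}$ satisfies $(x,L(\lambda))\subset\mathbf{A}$, then $\big((x,G_{b'})\big)_{b'}\in\mathbf{A}^{B(\lambda)}$ equals the Gram matrix applied to $(c_{b})_{b}$; inverting yields all $c_{b}\in\mathbf{A}$, i.e. $x\in L(\lambda)$.

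The main obstacle is the bookkeeping in the rank-one step: one must verify that the $t$-powers introduced by the two asymmetric adjunction relations in (3.1) and by the weight action of $k_{i}^{\pm1},k_{i}'^{\pm1}$ cancel identically, so that the transition coefficients are units of $\mathbf{A}$ equal to $1$ at $v=0$. Once this cancellation is confirmed, every remaining step runs parallel to Kashiwara's one-parameter argument.
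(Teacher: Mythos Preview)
Your approach is essentially the same as the paper's: reduce to a rank-one computation along $i$-strings, verify that the $t$-powers from the $k_i'$-action and from the adjunction (3.1) cancel so that the transition factors lie in $1+v\mathbf{A}$, and then induct on height. Your recursion $(f_i^{(n)}u,f_i^{(n)}u)=\tfrac{v_i^{2a-2n+2}-1}{v_i^{2n}-1}(f_i^{(n-1)}u,f_i^{(n-1)}u)$ is correct and the $t$-cancellation is exactly the point the paper checks.

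There is one gap in your induction for (i). Decomposing $x,x'\in L(\lambda)_{\lambda+\xi}$ directly as $x=\sum_{n\ge 0}f_i^{(n)}x_n$ leaves an $n=0$ term $(x_0,x'_0)$ at the \emph{same} height $|\xi|$, so the claim that ``each $(x,x')$ is an $\mathbf{A}$-combination of pairings at strictly smaller height'' does not follow from the string decomposition alone. The paper avoids this by first using the defining property $L(\lambda)_{\lambda+\xi}=\sum_i\tilde{f}_iL(\lambda)_{\lambda+\xi+\alpha_i}$ for $\xi\neq 0$: it suffices to prove $(\tilde{f}_ix,y)\equiv(x,\tilde{e}_iy)\bmod v\mathbf{A}$ for $x\in L(\lambda)_{\lambda+\xi+\alpha_i}$, and then the $i$-string components $x_0,y_0$ of $x,y$ sit at height at most $|\xi|-1$. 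With this one-line fix your argument goes through.

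For (iv) you take a slightly different route than the paper: you invert the Gram matrix (congruent to the identity mod $v\mathbf{A}$, hence a unit over the local ring $\mathbf{A}$), whereas the paper argues by choosing the minimal $r$ with $v^rx\in L(\lambda)$ and deriving a contradiction from orthonormality. Both are valid; yours is arguably cleaner.
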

\begin{proof}
We shall prove $(L(\lambda)_{\lambda+\xi}, L(\lambda)_{\lambda+\xi}) \subset \mathbf{A}$ by the induction on $|\xi|.$ If $|\xi|=0,$ it is trivial. Assume $|\xi|> 0.$ Since $L(\lambda)_{\lambda+\xi}=\sum \tilde{f}_{i}L(\lambda)_{\lambda+\xi+\alpha_{i}},$ it suffices to show $$(\tilde{f}_{i}x, y)\equiv (x, \tilde{e}_{i}y)~\mathrm{mod}~v\mathbf{A}~~\mathrm{for}~x\in L(\lambda)_{\lambda+\xi+\alpha_{i}}~\mathrm{and}~y\in L(\lambda)_{\lambda+\xi}.\eqno{(6.1)}$$
We may assume $x=f_{i}^{(n)}x_{0}$ and $y=f_{i}^{(m)}y_{0}$ with $e_{i}x_{0}=e_{i}y_{0}=0$, $\langle h_{i}, \lambda+\xi+(n+1)\alpha_{i}\rangle\geq n\geq 0$ and $\langle h_{i}, \lambda+\xi+m\alpha_{i}\rangle\geq m\geq 0.$

Then we have, setting $\mu=\lambda+\xi,$
\begin{align*}
(f_{i}^{(n+1)}x_{0}, f_{i}^{(m)}y_{0})&=\frac{1}{[m]_{v_{i}}^{!}}((v_{i}^{-1}k_{i}'^{-1}e_{i})^{m}f_{i}^{(n+1)}x_{0}, y_{0})\\
&=v_{i}^{-m^{2}}(k_{i}'^{-m}e_{i}^{(m)}f_{i}^{(n+1)}x_{0}, y_{0})\\
&=\delta_{n+1,m}v_{i}^{-m^{2}}(k_{i}'^{-m}\left[\hspace*{-0.1cm}\begin{array}{c}\langle h_{i}, \mu+(n+1)\alpha_{i}\rangle \\
m
\end{array}\hspace*{-0.1cm}\right]_{v_{i}}t^{m(\langle i, \mu\rangle-\langle \mu, i\rangle)}x_{0}, y_{0})\\
&=\delta_{n+1,m}v_{i}^{m(\langle h_{i}, \mu\rangle+m)}\left[\hspace*{-0.1cm}\begin{array}{c}\langle h_{i}, \mu\rangle+2m \\
m
\end{array}\hspace*{-0.1cm}\right]_{v_{i}}(x_{0}, y_{0})
\end{align*}
Since $(x_{0}, y_{0})\in \mathbf{A}$ by the hypothesis of induction and $v_{i}^{m(\langle h_{i}, \mu\rangle+m)}\left[\hspace*{-0.1cm}\begin{array}{c}\langle h_{i}, \mu\rangle+2m \\
m
\end{array}\hspace*{-0.1cm}\right]_{v_{i}}\in 1+v\mathbf{A},$ we obtain $$(f_{i}^{(n+1)}x_{0}, f_{i}^{(m)}y_{0})\equiv \delta_{n+1,m}(x_{0}, y_{0})~\mathrm{mod}~v\mathbf{A}.$$
Similar arguments show that $(f_{i}^{(n)}x_{0}, f_{i}^{(m-1)}y_{0})\equiv \delta_{n+1,m}(x_{0}, y_{0})~\mathrm{mod}~v\mathbf{A}.$ Hence we obtain (6.1), and whence $(\mathrm{i})$ and $(\mathrm{ii}).$

$(\mathrm{iii})$ We shall show that $(b,b')_{0}=\delta_{b,b'}$ for $b,b'\in B(\lambda)_{\lambda+\xi}$ by the induction on $|\xi|.$ If $|\xi|=0,$ this is trivial. If $|\xi|> 0,$ taking $i$ such that $\tilde{e}_{i}b\in B(\lambda),$ we have $(b,b')_{0}=(\tilde{f}_{i}\tilde{e}_{i}b,b')_{0}=(\tilde{e}_{i}b, \tilde{e}_{i}b')_{0}=\delta_{\tilde{e}_{i}b,\tilde{e}_{i}b'}=\delta_{b,b'}.$

$(\mathrm{iv})$ By $(\mathrm{i}),$ we obtain $L(\lambda)\subset\{x\in V(\lambda); (x, L(\lambda))\subset \mathbf{A}\}.$ Assume $x\in V(\lambda)$ such that $(x, L(\lambda))\subset \mathbf{A}$. By the definition of crystal basis, $x$ can be written as $x=\sum_{b\in B(\lambda)}a_{b}\tilde{b}$, where $a_{b}\in \mathbb{Q}(v, t)$ and $\tilde{b}+vL(\lambda)=b.$ Choose $r\in \mathbb{Z}_{\geq 0}$ be the smallest such that $v^{r}a_{b}\in \mathbf{A}$ for all $b.$ Assume $r> 0.$ Then $(v^{r}x, \tilde{b})=v^{r}(x, \tilde{b})\in v\mathbf{A}$ for all $b\in B(\lambda).$ So we have $$0=(\overline{v^{r}x}, b)_{0}=\sum(\overline{v^{r}a_{b'}}b', b)_{0}=v^{r}a_{b}+v\mathbf{A}.$$ Thus we have $v^{r-1}a_{b}\in \mathbf{A}$ for all $b\in B(\lambda).$ This contradicts the minimality of $r.$ Therefore we get $a_{b}\in \mathbf{A}$ for all $b\in B(\lambda)$ and $x\in L(\lambda).$
\end{proof}
Similar arguments show the following proposition.

\begin{proposition}

$(\mathrm{i})$ $(L(\infty), L(\infty))\subset \mathbf{A},$ and so it descends to a bilinear form $$(\cdot, \cdot)_{0} : L(\infty)/vL(\infty)\times L(\infty)/vL(\infty)\rightarrow \mathbb{Q}(t),~~(x+vL(\infty), y+vL(\infty))_{0}=(x,y)|_{v=0}.$$

$(\mathrm{ii})$ $(\tilde{e}_{i}x, y)_{0}=(x, \tilde{f}_{i}y)_{0}$ for $x,y\in L(\infty)/vL(\infty).$

$(\mathrm{iii})$ $B(\infty)$ is an orthonormal basis with respect to $(\cdot, \cdot)_{0}.$ In particular, $(\cdot, \cdot)_{0}$ is positive definite.

$(\mathrm{iv})$ $L(\infty)=\{x\in U_{v,t}^{-}(\mathfrak{g}); (x, L(\infty))\subset \mathbf{A}\}.$
\end{proposition}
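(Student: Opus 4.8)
The plan is to follow, essentially line for line, the structure of the proof of Proposition 6.1, replacing the polarization identities on $V(\lambda)$ from Proposition 3.6 by the single adjunction relation $(f_i x, y) = (x, e_i' y)$ on $U_{v,t}^-(\mathfrak{g})$ from Proposition 4.8, and replacing the quantum-group commutators by the defining relation (4.1) of the Kashiwara algebra $\mathcal{B}_{v,t}(\mathfrak{g})$.

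For $(\mathrm{i})$ I would prove $(L(\infty)_\xi, L(\infty)_\xi)\subset\mathbf{A}$ by induction on $|\xi|$, the case $|\xi|=0$ being trivial. Since $L(\infty)_\xi=\sum_i\tilde{f}_iL(\infty)_{\xi+\alpha_i}$, it suffices to establish $(\tilde{f}_ix,y)\equiv(x,\tilde{e}_iy)~\mathrm{mod}~v\mathbf{A}$ for $x\in L(\infty)_{\xi+\alpha_i}$ and $y\in L(\infty)_\xi$; by Lemma 5.2 we may reduce to $i$-string vectors $x=f_i^{(n)}x_0$, $y=f_i^{(m)}y_0$ with $e_i'x_0=e_i'y_0=0$. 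The core computation evaluates $(f_i^{(n+1)}x_0,f_i^{(m)}y_0)$: using the symmetry of the form together with $(f_ix,y)=(x,e_i'y)$ one moves all $m$ copies of $f_i$ off the second argument, obtaining $\tfrac{1}{[m]_{v_i}^!}(e_i'^{\,m}f_i^{(n+1)}x_0,y_0)$. Relation (4.1) for $j=i$ reads $e_i'f_i=v_i^{-2}f_ie_i'+1$ (the $t$-power being $t^{\langle i,i\rangle-\langle i,i\rangle}=1$, so this step is entirely $t$-free), and iterating it on a vector killed by $e_i'$ gives $e_i'^{\,N}f_i^{(N)}x_0=v_i^{-\binom{N}{2}}x_0$. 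Combining this with the vanishing $(f_iz,y_0)=(z,e_i'y_0)=0$ (available whenever a factor $f_i$ can be extracted, i.e. $n+1-m\geq1$) and with $e_i'^{\,m}f_i^{(n+1)}x_0=0$ for $m>n+1$ forces $m=n+1$, and the surviving scalar is $v_i^{-\binom{n+1}{2}}/[n+1]_{v_i}^!=\prod_{r=1}^{n+1}(1-v_i^2)/(1-v_i^{2r})\in1+v\mathbf{A}$. Hence $(f_i^{(n+1)}x_0,f_i^{(m)}y_0)\equiv\delta_{n+1,m}(x_0,y_0)~\mathrm{mod}~v\mathbf{A}$; the symmetric computation for $(f_i^{(n)}x_0,f_i^{(m-1)}y_0)$ gives the same value, which yields the required congruence and therefore $(\mathrm{i})$ and $(\mathrm{ii})$.

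For $(\mathrm{iii})$ I would show $(b,b')_0=\delta_{b,b'}$ for $b,b'\in B(\infty)_\xi$ by induction on $|\xi|$. When $\xi\neq0$ choose $i$ with $\tilde{e}_ib\neq0$ (such $i$ exists, since $1+vL(\infty)$ is the unique $\tilde{e}_i$-highest element by Corollary 4.3). Then $b=\tilde{f}_i\tilde{e}_ib$ by the crystal axiom already established in Theorem 5.1$(b)$, so by $(\mathrm{ii})$ we get $(b,b')_0=(\tilde{f}_i\tilde{e}_ib,b')_0=(\tilde{e}_ib,\tilde{e}_ib')_0=\delta_{\tilde{e}_ib,\tilde{e}_ib'}=\delta_{b,b'}$, the penultimate equality being the inductive hypothesis (with the case $\tilde{e}_ib'=0$ giving $0$, and $\tilde{e}_ib=\tilde{e}_ib'\neq0$ forcing $b=b'$). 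For $(\mathrm{iv})$, the inclusion $\subset$ is $(\mathrm{i})$; for $\supset$ I would take $x$ with $(x,L(\infty))\subset\mathbf{A}$, write $x=\sum_ba_b\tilde{b}$ with $\tilde{b}+vL(\infty)=b$, let $r$ be minimal with $v^ra_b\in\mathbf{A}$ for all $b$, and if $r>0$ derive $0=(\overline{v^rx},b)_0=v^ra_b+v\mathbf{A}$ from the orthonormality in $(\mathrm{iii})$, contradicting minimality — exactly the argument of Proposition 6.1$(\mathrm{iv})$.

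The main obstacle is the bookkeeping in the core computation of $(\mathrm{i})$: one must track the $v_i$-powers produced by iterating $e_i'f_i=v_i^{-2}f_ie_i'+1$ carefully enough to see that the surviving scalar lies in $1+v\mathbf{A}$. This step is in fact cleaner than its $V(\lambda)$ counterpart, because the adjunction $(f_ix,y)=(x,e_i'y)$ carries no $k_i'^{-1}$ or $t$ factors, so no weight-dependent or $t$-dependent corrections intervene; the only genuine check is that $v_i^{-\binom{N}{2}}/[N]_{v_i}^!$ specializes to $1$ at $v=0$, which holds since $v_i$ is a strictly positive power of $v$ (as $i\cdot i\in2\mathbb{Z}_{>0}$).
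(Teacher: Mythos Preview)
Your proposal is correct and is precisely the ``similar arguments'' the paper alludes to: you transplant the proof of Proposition~6.1 to $U_{v,t}^{-}(\mathfrak{g})$, replacing the polarization identities involving $k_i'^{-1}e_i$ by the adjunction $(f_ix,y)=(x,e_i'y)$ of Proposition~4.8 and the commutator computation by the Kashiwara-algebra relation $e_i'f_i=v_i^{-2}f_ie_i'+1$, correctly noting that the latter is $t$-free so the surviving scalar $v_i^{-\binom{N}{2}}/[N]_{v_i}^{!}=\prod_{r=1}^{N}(1-v_i^{2})/(1-v_i^{2r})\in 1+v\mathbf{A}$. One small remark: the reduction to $i$-string vectors should be justified by the established crystal-lattice property of $L(\infty)$ from Theorem~5.1 rather than by Lemma~5.2 (which is an intermediate inductive statement), and the existence of $i$ with $\tilde{e}_ib\neq0$ for $b\in B(\infty)_{\xi}$, $\xi\neq0$, follows from $\tilde{e}_i\tilde{f}_i=\mathrm{id}$ on $U_{v,t}^{-}(\mathfrak{g})$ rather than directly from Corollary~4.3; but these are citation adjustments, not gaps.
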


The following result is an easy consequence of the positive definiteness of $(\cdot, \cdot)_{0}.$
\begin{proposition}
For $\lambda\in P_{+},$ we have $$L(\lambda)=\{x\in V(\lambda); (x, x)\subset \mathbf{A}\}~~~~\mathrm{and}~~~~L(\infty)=\{x\in U_{v,t}^{-}(\mathfrak{g}); (x, x)\subset \mathbf{A}\}.$$
\end{proposition}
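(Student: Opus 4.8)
The plan is to prove each of the two equalities by establishing two inclusions, treating $V(\lambda)$ and $U_{v,t}^{-}(\mathfrak{g})$ in parallel since the arguments coincide once Propositions 6.1 and 6.2 are in hand. The inclusion $L(\lambda)\subseteq\{x\in V(\lambda); (x,x)\in\mathbf{A}\}$ is immediate: if $x\in L(\lambda)$, then $(x,x)\in(L(\lambda),L(\lambda))\subset\mathbf{A}$ by Proposition 6.1(i), and likewise $L(\infty)\subseteq\{x; (x,x)\in\mathbf{A}\}$ by Proposition 6.2(i). All the content therefore lies in the reverse inclusion.

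For the reverse inclusion I would adapt the minimal-power argument already used in the proof of Proposition 6.1(iv). Fix lifts $\tilde{b}\in L(\lambda)$ of the elements $b\in B(\lambda)$; since $B(\lambda)$ is a $\mathbb{Q}(t)$-basis of $L(\lambda)/vL(\lambda)$, the family $\{\tilde{b}\}$ is an $\mathbf{A}$-basis of $L(\lambda)$. Given $x\in V(\lambda)$ with $(x,x)\in\mathbf{A}$, write $x=\sum_{b\in B(\lambda)}a_{b}\tilde{b}$ with $a_{b}\in\mathbb{Q}(v,t)$, and let $r$ be the smallest integer with $v^{r}a_{b}\in\mathbf{A}$ for every $b$. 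If $r\leq 0$ then every $a_{b}\in\mathbf{A}$ and $x\in L(\lambda)$, as desired. Assuming $r\geq 1$, the element $v^{r}x=\sum_{b}(v^{r}a_{b})\tilde{b}$ lies in $L(\lambda)$, and by minimality of $r$ at least one coefficient satisfies $\overline{v^{r}a_{b}}\neq 0$ in $\mathbb{Q}(t)=\mathbf{A}/v\mathbf{A}$. Since the polarization is $\mathbb{Q}(v,t)$-bilinear, $(v^{r}x,v^{r}x)=v^{2r}(x,x)\in v^{2r}\mathbf{A}\subset v\mathbf{A}$ because $(x,x)\in\mathbf{A}$ and $2r\geq 2$; hence $(v^{r}x,v^{r}x)_{0}=0$. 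On the other hand, orthonormality of $B(\lambda)$ (Proposition 6.1(iii)) gives $(v^{r}x,v^{r}x)_{0}=\sum_{b}(\overline{v^{r}a_{b}})^{2}$.

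The crux of the matter, and the one point deserving care, is that this last sum of squares cannot vanish when some $\overline{v^{r}a_{b}}$ is nonzero: this is exactly the positive-definiteness of $(\cdot,\cdot)_{0}$ asserted in Proposition 6.1(iii), which rests on the fact that the residue field $\mathbb{Q}(t)$ is formally real, so a nontrivial sum of squares in $\mathbb{Q}(t)$ is never $0$. (It is essential here that the form is genuinely bilinear rather than bar-symmetric, so that the factor $v^{2r}$ really appears; this is consistent with the proof of Proposition 6.1(iv), where $v^{r}$ is pulled out of the first argument.) Combining the two computations of $(v^{r}x,v^{r}x)_{0}$ yields a contradiction, forcing $r\leq 0$ and hence $x\in L(\lambda)$. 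The identical argument with $L(\infty)$, $B(\infty)$ and Proposition 6.2 in place of $L(\lambda)$, $B(\lambda)$ and Proposition 6.1 settles the second equality.
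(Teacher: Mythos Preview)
Your proof is correct and follows precisely the approach the paper indicates: the paper states only that the result ``is an easy consequence of the positive definiteness of $(\cdot,\cdot)_{0}$'' without writing out the details, and your argument is the natural way to unpack that remark. The minimal-$r$ device, the computation $(v^{r}x,v^{r}x)_{0}=\sum_{b}(\overline{v^{r}a_{b}})^{2}$ via orthonormality, and the use of formal reality of $\mathbb{Q}(t)$ to force all residues to vanish are exactly what is needed; your observation that genuine $\mathbb{Q}(v,t)$-bilinearity (rather than sesquilinearity) is what makes the factor $v^{2r}$ appear is also on point.
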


\begin{lemma}
$(\mathrm{i})$ For any $i, j,$ we have $(Ad(k_{i})e_{i}'')\circ e_{j}'=e_{j}'\circ Ad(k_{i})e_{i}''$ in $\mathrm{End}(U_{v,t}^{-}(\mathfrak{g})).$

$(\mathrm{ii})$ We have $(Pf_{i}, Q)=(P, Ad(k_{i})e_{i}''Q)$ for any $P, Q\in U_{v,t}^{-}(\mathfrak{g}).$

\end{lemma}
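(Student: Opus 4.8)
The plan is to establish the two assertions in sequence, deriving (ii) from (i) together with the adjunction property of the polarization in Proposition 4.8. Throughout I will use that $Ad(k_i)$ acts on the weight space $U_{v,t}^{-}(\mathfrak{g})_\eta$ by the scalar $\kappa_i(\eta):=v^{i\cdot\eta}t^{-\langle\eta,i\rangle+\langle i,\eta\rangle}$, which is additive in $\eta$: from the definition of $U_{v,t}^{-}(\mathfrak{g})_\eta$ one gets $\kappa_i(\eta+\alpha_j)=\kappa_i(\eta)\,v^{i\cdot j}t^{\langle i,j\rangle-\langle j,i\rangle}$.

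For (i), I would apply both sides to a homogeneous $y\in U_{v,t}^{-}(\mathfrak{g})_\xi$. Since each of $e_i'$ and $e_i''$ raises the weight by $\alpha_i$, the composites land in $U_{v,t}^{-}(\mathfrak{g})_{\xi+\alpha_i+\alpha_j}$, and the claim reduces to the scalar identity $\kappa_i(\xi+\alpha_i+\alpha_j)\,e_i''e_j'(y)=\kappa_i(\xi+\alpha_i)\,e_j'e_i''(y)$. Proposition 4.2, read with the roles of $i$ and $j$ exchanged, gives $e_j'e_i''=v^{i\cdot j}t^{\langle i,j\rangle-\langle j,i\rangle}\,e_i''e_j'$; substituting this converts the required identity into $\kappa_i(\xi+\alpha_i+\alpha_j)=\kappa_i(\xi+\alpha_i)\,v^{i\cdot j}t^{\langle i,j\rangle-\langle j,i\rangle}$, which is exactly the additivity recorded above. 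Thus (i) is a bookkeeping computation once Proposition 4.2 is in hand; the only care required is to match the $t$-power produced by the weight shift of $Ad(k_i)$ against the commutation constant of Proposition 4.2.

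For (ii), I would induct on the height $|\xi|$ of the weight $\xi$ of $P$; by Corollary 4.9 both sides vanish unless $P$ and $Q$ have matching weights, so I may take $P\in U_{v,t}^{-}(\mathfrak{g})_\xi$ homogeneous and $Q\in U_{v,t}^{-}(\mathfrak{g})_{\xi-\alpha_i}$. In the base case $\xi=0$ we have $P\in\mathbb{Q}(v,t)\cdot 1$ and $Q=cf_i$, so the identity collapses to $(f_i,cf_i)=(1,e_i''(cf_i))$, i.e. $c=c$, using $(f_i,f_i)=(1,e_i'f_i)=1$ and $e_i''(f_i)=1$. For the inductive step I write $P=f_jP'$ with $P'$ of strictly smaller height and use the adjunction $(f_jx,y)=(x,e_j'y)$ of Proposition 4.8 to compute $(Pf_i,Q)=(f_j(P'f_i),Q)=(P'f_i,e_j'Q)$.

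The crucial point, and the step I expect to be the main obstacle, is that the adjunction of Proposition 4.8 is one-sided: it lets one strip a factor $f_j$ off the \emph{left} of the first argument, while the extra factor $f_i$ in $Pf_i$ sits on the right and cannot be moved across, since the form is not symmetric enough to transpose it. This is precisely what forces the use of part (i). Applying the induction hypothesis to $P'$ gives $(P'f_i,e_j'Q)=(P',Ad(k_i)e_i''\,e_j'Q)$; part (i) then lets me commute $Ad(k_i)e_i''$ past $e_j'$ to obtain $(P',e_j'(Ad(k_i)e_i''\,Q))$; and a final application of the adjunction re-absorbs $f_j$ on the left, $(P',e_j'(Ad(k_i)e_i''\,Q))=(f_jP',Ad(k_i)e_i''\,Q)=(P,Ad(k_i)e_i''\,Q)$, which is the desired equality. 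The remaining work is purely to verify that the weights line up so that the induction hypothesis and the commutation relation from (i) are applied to the correct homogeneous components.
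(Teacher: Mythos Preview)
Your proposal is correct and follows essentially the same route as the paper. The paper dispatches part~(i) in one line (``follows immediately from Proposition~4.2''), whereas you unpack the weight--scalar bookkeeping explicitly; for part~(ii) the paper performs exactly your induction on $P$, proving the base case $P=1$ and then the step $(f_jPf_i,Q)=(Pf_i,e_j'Q)=(P,Ad(k_i)e_i''e_j'Q)=(P,e_j'Ad(k_i)e_i''Q)=(f_jP,Ad(k_i)e_i''Q)$, which is verbatim your chain of equalities.
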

\begin{proof}
Part $(\mathrm{i})$ follows immediately from Proposition 4.2.

Let us prove $(\mathrm{ii})$. When $P=1,$ $(f_{i}, f_{i})=(1, Ad(k_{i})e_{i}''f_{i})$ implies that $(\mathrm{ii})$ is true for any $Q.$ Hence it suffices to show that, if $P$ satisfies $(\mathrm{ii})$ for any $Q,$ then we have $(f_{j}Pf_{i}, Q)=(f_{j}P, Ad(k_{i})e_{i}''Q).$ By using $(\mathrm{i})$, we have \begin{align*}(f_{j}Pf_{i}, Q)&=(Pf_{i}, e_{j}'Q)=(P, Ad(k_{i})e_{i}''e_{j}'Q)\\&=(P, e_{j}'(Ad(k_{i})e_{i}'')Q)\\&=(f_{j}P, Ad(k_{i})e_{i}''Q).\end{align*}
Thus we obtain the desired result.
\end{proof}
\begin{lemma}
We have $(e_{i}'(P^{*}))^{*}= Ad(k_{i})e_{i}''P$ for any $P\in U_{v,t}^{-}(\mathfrak{g}).$
\end{lemma}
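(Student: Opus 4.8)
The plan is to apply the anti-automorphism $*$ directly to the defining identity of $e_i'$ and $e_i''$ in Lemma 4.1 and then to separate the contributions of $k_i$ and $k_i'$ using the triangular decomposition of Corollary 2.2. Since $f_i^*=f_i$, the map $*$ preserves $U_{v,t}^-(\mathfrak{g})$ together with its weight grading, and both $e_i'$ and $e_i''$ send $U_{v,t}^-(\mathfrak{g})_\xi$ into $U_{v,t}^-(\mathfrak{g})_{\xi+\alpha_i}$; as both sides of the asserted identity are additive in $P$ and $\mathrm{Ad}(k_i)$ preserves weight spaces, I may assume $P\in U_{v,t}^-(\mathfrak{g})_\xi$ for a fixed $\xi\in Q_-$.

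First I would apply $*$ to the relation $[e_i,y]=(k_ie_i''(y)-k_i'e_i'(y))/(v_i-v_i^{-1})$. Because $*$ is an anti-automorphism with $e_i^*=e_i$, $k_i^*=k_i'$, $k_i'^*=k_i$, and fixes the scalar $v_i-v_i^{-1}$, the left-hand side becomes $[e_i,y]^*=-[e_i,y^*]$, while the right-hand side turns into $\bigl((e_i''(y))^*k_i'-(e_i'(y))^*k_i\bigr)/(v_i-v_i^{-1})$. Setting $y=P^*$ and using $*^2=\mathrm{id}$ (verified on the generators), then clearing the denominator, I obtain
$$(e_i''(P^*))^*k_i'-(e_i'(P^*))^*k_i=k_i'e_i'(P)-k_ie_i''(P).$$

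Next I would bring all group-like elements to the right on the right-hand side by means of the commutation rules $k_iz=v^{i\cdot\eta}t^{-\langle\eta,i\rangle+\langle i,\eta\rangle}zk_i$ and $k_i'z=v^{-i\cdot\eta}t^{-\langle\eta,i\rangle+\langle i,\eta\rangle}zk_i'$ for $z\in U_{v,t}^-(\mathfrak{g})_\eta$, applied with $\eta=\xi+\alpha_i$. All four terms then lie in $U_{v,t}^-(\mathfrak{g})_{\xi+\alpha_i}\cdot U_{v,t}^0(\mathfrak{g})$, and the triangular decomposition forces the factors attached to $k_i$ and to $k_i'$ to match separately. Comparing the coefficients of $k_i$ yields $(e_i'(P^*))^*=v^{i\cdot(\xi+\alpha_i)}t^{-\langle\xi+\alpha_i,i\rangle+\langle i,\xi+\alpha_i\rangle}e_i''(P)$; since $e_i''(P)\in U_{v,t}^-(\mathfrak{g})_{\xi+\alpha_i}$, the scalar appearing here is exactly the eigenvalue of $\mathrm{Ad}(k_i)$ on this weight space, so the right-hand side equals $\mathrm{Ad}(k_i)e_i''P$, which is the claim.

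I expect the only delicate point to be the bookkeeping of the $t$-exponents while commuting $k_i$ and $k_i'$ past weight vectors, and the check that the scalar produced by the $k_i$-comparison agrees with the $\mathrm{Ad}(k_i)$-eigenvalue on $U_{v,t}^-(\mathfrak{g})_{\xi+\alpha_i}$; this is routine once the weight of $e_i''(P)$ is fixed. As a consistency check, comparing the coefficients of $k_i'$ instead gives the companion identity $(e_i''(P^*))^*=\mathrm{Ad}(k_i')e_i'P$, which one also recovers by applying $*$ to the statement being proved. The structural ingredient that drives the whole argument is the linear independence of $k_i$ and $k_i'$ supplied by the triangular decomposition of Corollary 2.2.
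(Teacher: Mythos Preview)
Your proof is correct and follows essentially the same approach as the paper: apply $*$ to the defining identity of Lemma~4.1 and use the uniqueness afforded by the triangular decomposition to read off the $k_i$- and $k_i'$-components. The paper commutes $k_i$ to the right (absorbing the scalar as $\mathrm{Ad}(k_i)$) \emph{before} applying $*$, whereas you apply $*$ first and then commute; this is only a reordering of the same computation.
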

\begin{proof}
We have
\begin{align*}
[e_{i}, P]&=\frac{k_{i}e_{i}''(P)-k_{i}'e_{i}'(P)}{v_{i}-v_{i}^{-1}}\\
&=\frac{(Ad(k_{i})e_{i}''P)k_{i}-(Ad(k_{i}')e_{i}'P)k_{i}'}{v_{i}-v_{i}^{-1}}.
\end{align*}
Hence taking $*,$ we obtain $$[P^{*}, e_{i}]=\frac{k_{i}'(Ad(k_{i})e_{i}''P)^{*}-k_{i}(Ad(k_{i}')e_{i}'P)^{*}}{v_{i}-v_{i}^{-1}}.$$
Thus we obtain the desired result.
\end{proof}
\begin{proposition}
For any $P, Q\in U_{v,t}^{-}(\mathfrak{g}),$ we have $(P^{*}, Q^{*})=(P, Q).$
\end{proposition}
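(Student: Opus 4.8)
The plan is to prove this by induction on the height $|\xi|$ of the weight. First, by Corollary 4.9 the polarization vanishes between distinct weight spaces, so it suffices to treat $P, Q \in U_{v,t}^{-}(\mathfrak{g})_{\xi}$ for a single $\xi \in Q_{-}$; moreover, since $f_{i}^{*}=f_{i}$ and $*$ is an anti-automorphism, the operator $*$ preserves each weight space $U_{v,t}^{-}(\mathfrak{g})_{\xi}$, so both sides are supported in the same weight. The base case $|\xi|=0$ is immediate, since then $P$ and $Q$ are scalars and $1^{*}=1$.

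For the inductive step I would use that $U_{v,t}^{-}(\mathfrak{g})_{\xi}=\sum_{i}f_{i}\,U_{v,t}^{-}(\mathfrak{g})_{\xi+\alpha_{i}}$, so by bilinearity it suffices to verify the identity when $P=f_{i}P'$ with $P'\in U_{v,t}^{-}(\mathfrak{g})_{\xi+\alpha_{i}}$. The key computation then runs as follows. Since $(f_{i}P')^{*}=P'^{*}f_{i}$, Lemma 6.4(ii) gives $((f_{i}P')^{*}, Q^{*})=(P'^{*}f_{i}, Q^{*})=(P'^{*}, Ad(k_{i})e_{i}''Q^{*})$. Applying Lemma 6.5 with $Q^{*}$ in place of $P$, and using that $*$ is an involution (which holds on generators, since $(t^{-1})^{*}=t$, and hence everywhere), I rewrite $Ad(k_{i})e_{i}''Q^{*}=(e_{i}'Q)^{*}$. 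Because $P'$ and $e_{i}'Q$ both lie in $U_{v,t}^{-}(\mathfrak{g})_{\xi+\alpha_{i}}$, of height $|\xi|-1$, the induction hypothesis yields $(P'^{*}, (e_{i}'Q)^{*})=(P', e_{i}'Q)$, and finally the defining property of the polarization (Proposition 4.8) gives $(P', e_{i}'Q)=(f_{i}P', Q)$. Chaining these equalities produces $((f_{i}P')^{*}, Q^{*})=(f_{i}P', Q)$, which completes the induction once summed over the $f_{i}P'$ that assemble $P$.

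The real content of the argument is packaged into Lemmas 6.4(ii) and 6.5; once those are available, the proposition follows by the bookkeeping above, so I do not expect a genuine obstacle in this proof itself. The one point requiring care is the middle step, where Lemma 6.5 is invoked to convert $Ad(k_{i})e_{i}''Q^{*}$ into $(e_{i}'Q)^{*}$: this is precisely what lowers the weight and lets the induction hypothesis close, and it is the reason the asymmetric right-multiplication formula of Lemma 6.4(ii) is reconciled with the left-multiplication formula defining the polarization. It is worth confirming at the outset that $*$ restricts to a weight-preserving involution of $U_{v,t}^{-}(\mathfrak{g})$, which is immediate from $f_{i}^{*}=f_{i}$ together with the anti-automorphism property.
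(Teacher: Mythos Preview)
Your proof is correct and is essentially the same as the paper's: both arguments induct on the weight, invoking Lemma~6.4(ii) and Lemma~6.5 together with the defining adjunction $(f_iP',Q)=(P',e_i'Q)$. The only cosmetic difference is that the paper builds up $P$ by right multiplication ($P\mapsto Pf_i$) whereas you build it up by left multiplication ($P'\mapsto f_iP'$); since $*$ swaps left and right, the two chains of equalities are mirror images of one another.
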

\begin{proof}
Since the proposition is true for $P=1,$ it suffices to prove that $(P^{*}, Q^{*})=(P, Q)$ implies that $((Pf_{i})^{*}, Q^{*})=(Pf_{i}, Q).$

We have, by Lemma 6.4$(\mathrm{ii})$ and Lemma 6.5,  \begin{align*}
((Pf_{i})^{*}, Q^{*})&=(f_{i}(P)^{*}, Q^{*}))=(P^{*}, e_{i}'Q^{*})\\&=(P, (e_{i}'Q^{*})^{*})
=(P, Ad(k_{i})e_{i}''Q)\\&=(Pf_{i}, Q).
\end{align*}\end{proof}
From Proposition 6.3 and 6.6, we immediately get the following result.
\begin{proposition} $L(\infty)^{*}=L(\infty).$
\end{proposition}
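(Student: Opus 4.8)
The plan is to read the result off formally from the two preceding propositions, with essentially no computation required. First I would note that the anti-automorphism $*$ restricts to $U_{v,t}^{-}(\mathfrak{g})$: since $f_{i}^{*}=f_{i}$ and $*$ is an anti-homomorphism, it carries any product of the $f_{i}$ to another such product (in reversed order), so $U_{v,t}^{-}(\mathfrak{g})^{*}=U_{v,t}^{-}(\mathfrak{g})$. I would also record that $*$ is an involution: evaluating $*\circ *$ on the generators shows it fixes each $f_{i}$ and fixes $v$ and $t$ (as $(t^{-1})^{*}=t$), and, being an automorphism determined by its values on generators, it must equal the identity, so $**=\mathrm{id}$.

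The heart of the argument is a one-line chain of equivalences. For $x\in U_{v,t}^{-}(\mathfrak{g})$, Proposition 6.6 gives $(x^{*},x^{*})=(x,x)$, hence $(x,x)\in\mathbf{A}$ if and only if $(x^{*},x^{*})\in\mathbf{A}$. Applying the quadratic characterization $L(\infty)=\{y\in U_{v,t}^{-}(\mathfrak{g});\ (y,y)\in\mathbf{A}\}$ of Proposition 6.3 to both sides, this reads
$$x\in L(\infty)\iff x^{*}\in L(\infty).$$
In particular $*$ maps $L(\infty)$ into itself, i.e.\ $L(\infty)^{*}\subseteq L(\infty)$.

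To turn this inclusion into an equality I would apply $*$ once more and use $**=\mathrm{id}$: from $L(\infty)^{*}\subseteq L(\infty)$ we obtain $L(\infty)=(L(\infty)^{*})^{*}\subseteq L(\infty)^{*}$, whence $L(\infty)^{*}=L(\infty)$.

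There is no genuine obstacle here, as the statement is a direct corollary of Propositions 6.3 and 6.6; the only point deserving a moment's attention is the upgrade from the inclusion $L(\infty)^{*}\subseteq L(\infty)$ to an equality, which rests on $*$ being an involution rather than merely an anti-homomorphism. This is immediate from its action on the generators, so the whole proof remains short and purely formal.
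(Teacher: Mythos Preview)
Your proof is correct and follows the same route the paper indicates: the paper simply records that Proposition~6.7 follows immediately from Propositions~6.3 and~6.6, and you have supplied exactly those details---the quadratic characterization of $L(\infty)$ together with $*$-invariance of the form, plus the observation that $*$ is an involution to upgrade inclusion to equality. Nothing further is needed.
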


\section{Global crystal bases}

\subsection{The integral form of $U_{v,t}(\mathfrak{g})$ and $V(\lambda)$}

Let $\mathcal{A}=\mathbb{Z}[v^{\pm 1}, t^{\pm 1}]$ and $\left[\hspace*{-0.1cm}\begin{array}{c}k_{i};a \\ n \end{array}\hspace*{-0.1cm}\right]
=\prod\limits_{h=1}^{n}\frac{k_{i}v_{i}^{a-h+1}-k_{i}'v_{i}^{-(a-h+1)}}{v_{i}^{h}-v_{i}^{-h}}$ for $a\in \mathbb{Z}.$ Let us denote by $U_{v,t}^{\mathbb{Z}}(\mathfrak{g})$ the $\mathcal{A}$-subalgebra of $U_{v,t}(\mathfrak{g})$ generated by $e_{i}^{(n)},$ $f_{i}^{(n)},$ and $k_{i}^{\pm 1}$, $k_{i}'^{\pm 1},$ $\left[\hspace*{-0.1cm}\begin{array}{c}k_{i};0 \\ n \end{array}\hspace*{-0.1cm}\right]$ for $i\in I$ and $n\in \mathbb{Z}_{\geq 0}.$ We set $U_{\mathbb{Z}}^{-}(\mathfrak{g})$ to be the $\mathcal{A}$-subalgebra generated by $f_{i}^{(n)}$ for $i\in I$ and $n\in \mathbb{Z}_{\geq 0}.$ Then $U_{v,t}^{\mathbb{Z}}(\mathfrak{g})$ and $U_{\mathbb{Z}}^{-}(\mathfrak{g})$ are stable under $*$ and $-.$ Moreover, $U_{\mathbb{Z}}^{-}(\mathfrak{g})$ is stable by $e_{i}'$ whence $U_{\mathbb{Z}}^{-}(\mathfrak{g})$ is stable by the Kashiwara operators $\tilde{e}_{i}$ and $\tilde{f}_{i}.$ Therefore, we get $$y=\sum f_{i}^{(n)}y_{n}\in U_{\mathbb{Z}}^{-}(\mathfrak{g})~\mathrm{and}~e_{i}'y=0\Longrightarrow y_{n}\in U_{\mathbb{Z}}^{-}(\mathfrak{g}).$$

Let $(f_{i}^{n}U_{v, t}^{-}(\mathfrak{g}))^{\mathbb{Z}}=f_{i}^{n}U_{v, t}^{-}(\mathfrak{g})\cap U_{\mathbb{Z}}^{-}(\mathfrak{g}).$ Then $(f_{i}^{n}U_{v, t}^{-}(\mathfrak{g}))^{\mathbb{Z}}=\sum_{k\geq n}f_{i}^{(k)}U_{\mathbb{Z}}^{-}(\mathfrak{g})$ for $n\geq 0.$ Moreover, $y=\sum f_{i}^{(k)}y_{k}\in (f_{i}^{n}U_{v, t}^{-}(\mathfrak{g}))^{\mathbb{Z}}$ if and only if $y_{k}=0$ for $k< n.$

Let $L_{\mathbb{Z}}(\infty)=L(\infty)\cap U_{\mathbb{Z}}^{-}(\mathfrak{g}).$ Then $L_{\mathbb{Z}}(\infty)$ is stable by $\tilde{e}_{i}$ and $\tilde{f}_{i}.$ Therefore, we have $B(\infty) \subset L_{\mathbb{Z}}(\infty)/vL_{\mathbb{Z}}(\infty) \subset L(\infty)/vL(\infty).$

Let $\mathbf{A}_{\mathbb{Z}}$ be the $\mathbb{Z}[t, t^{-1}]$-subalgebra of $\mathbb{Q}(v, t)$ generated by $v$ and $(1-v^{2n})^{-1}$ $(n\geq 1)$. Let $\mathbf{K}_{\mathbb{Z}}$ be the subalgebra generated by $\mathbf{A}_{\mathbb{Z}}$ and $v^{-1}.$ Then we have $\mathbf{A}_{\mathbb{Z}}=\mathbf{A}\cap \mathbf{K}_{\mathbb{Z}}.$ We can easily see that $(U_{\mathbb{Z}}^{-}(\mathfrak{g}), U_{\mathbb{Z}}^{-}(\mathfrak{g}))\subset \mathbf{K}_{\mathbb{Z}},$ and hence $(L_{\mathbb{Z}}(\infty), L_{\mathbb{Z}}(\infty))\subset \mathbf{A}_{\mathbb{Z}}.$ Thus we obtain that $(\cdot, \cdot)_{0}$ is a $\mathbb{Z}[t, t^{-1}]$-valued bilinear form on $L_{\mathbb{Z}}(\infty)/vL_{\mathbb{Z}}(\infty).$ From this we can easily get the following proposition.
\begin{proposition}
$(\mathrm{i})$ $L_{\mathbb{Z}}(\infty)/vL_{\mathbb{Z}}(\infty)$ is a free $\mathbb{Z}[t, t^{-1}]$-module with a basis $B(\infty)$.

$(\mathrm{ii})$ $B(\infty)\cup (-B(\infty))=\{x\in L_{\mathbb{Z}}(\infty)/vL_{\mathbb{Z}}(\infty); (x, x)_{0}=1\}.$

\end{proposition}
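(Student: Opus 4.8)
The plan is to deduce both statements from the orthonormality of $B(\infty)$ (Proposition 6.2(iii)) together with the $\mathbb{Z}[t,t^{-1}]$-integrality of the polarization established just before the proposition, reserving the only arithmetic input for the very last step. First I would record that the natural map $L_{\mathbb{Z}}(\infty)/vL_{\mathbb{Z}}(\infty)\hookrightarrow L(\infty)/vL(\infty)$ is injective: since $U_{\mathbb{Z}}^{-}(\mathfrak{g})$ is defined over $\mathcal{A}=\mathbb{Z}[v^{\pm 1},t^{\pm 1}]$, any $x\in L_{\mathbb{Z}}(\infty)\cap vL(\infty)$ satisfies $v^{-1}x\in U_{\mathbb{Z}}^{-}(\mathfrak{g})\cap L(\infty)=L_{\mathbb{Z}}(\infty)$, whence $vL(\infty)\cap L_{\mathbb{Z}}(\infty)=vL_{\mathbb{Z}}(\infty)$. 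By Theorem 5.1(b), $B(\infty)$ is a $\mathbb{Q}(t)$-basis of $L(\infty)/vL(\infty)$, and it already lies in $L_{\mathbb{Z}}(\infty)/vL_{\mathbb{Z}}(\infty)$.

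For part $(\mathrm{i})$ I would prove that $B(\infty)$ spans $L_{\mathbb{Z}}(\infty)/vL_{\mathbb{Z}}(\infty)$ over $\mathbb{Z}[t,t^{-1}]$ by coefficient extraction. Given $x\in L_{\mathbb{Z}}(\infty)/vL_{\mathbb{Z}}(\infty)$, expand $x=\sum_{b\in B(\infty)}c_{b}\,b$ with $c_{b}\in\mathbb{Q}(t)$, which is legitimate because $B(\infty)$ is a $\mathbb{Q}(t)$-basis of the ambient space. Pairing against $b'$ and using $(b,b')_{0}=\delta_{b,b'}$ gives $c_{b'}=(x,b')_{0}$; since both $x$ and $b'$ lie in $L_{\mathbb{Z}}(\infty)/vL_{\mathbb{Z}}(\infty)$ and $(\cdot,\cdot)_{0}$ is $\mathbb{Z}[t,t^{-1}]$-valued there, we conclude $c_{b'}\in\mathbb{Z}[t,t^{-1}]$. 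Thus $B(\infty)$ spans over $\mathbb{Z}[t,t^{-1}]$, while its linear independence over $\mathbb{Z}[t,t^{-1}]$ is inherited from the $\mathbb{Q}(t)$-linear independence through the injection above; this gives freeness with basis $B(\infty)$.

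For part $(\mathrm{ii})$ the inclusion $B(\infty)\cup(-B(\infty))\subset\{x;(x,x)_{0}=1\}$ is immediate from orthonormality. For the reverse inclusion I would take $x$ with $(x,x)_{0}=1$, write $x=\sum_{b}c_{b}\,b$ with $c_{b}\in\mathbb{Z}[t,t^{-1}]$ by $(\mathrm{i})$, and use orthonormality to obtain $\sum_{b}c_{b}^{2}=1$ in $\mathbb{Z}[t,t^{-1}]$. The point is then the elementary fact that such a relation forces a single $c_{b}=\pm 1$ and all others to vanish. I would argue this by extreme-degree comparison: writing $d^{+}$ and $d^{-}$ for the top and bottom degrees of a nonzero Laurent polynomial, the highest degree occurring in $\sum_{b}c_{b}^{2}$ is $2\max_{b}d^{+}(c_{b})$, and its coefficient is a sum of squares of integers, hence a positive integer with no cancellation; comparing with $1$ forces $\max_{b}d^{+}(c_{b})=0$, and symmetrically $\min_{b}d^{-}(c_{b})=0$. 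Therefore every nonzero $c_{b}$ is a constant integer, and $\sum_{b}c_{b}^{2}=1$ leaves exactly one equal to $\pm 1$.

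I expect the only genuinely non-formal step to be this final degree comparison in $\mathbb{Z}[t,t^{-1}]$; everything else is a formal consequence of orthonormality and the integrality of $(\cdot,\cdot)_{0}$. In the classical one-parameter case the ground ring is $\mathbb{Z}$ and the relation $\sum_{b}c_{b}^{2}=1$ is immediate, so the novelty here is precisely that the extra Tate-twist variable $t$ must be ruled out of the coefficients $c_{b}$, and the degree argument is exactly what accomplishes this.
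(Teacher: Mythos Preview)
Your argument is correct and is precisely the standard filling-in of details that the paper leaves implicit when it writes ``From this we can easily get the following proposition.'' The injectivity of $L_{\mathbb{Z}}(\infty)/vL_{\mathbb{Z}}(\infty)\hookrightarrow L(\infty)/vL(\infty)$, the coefficient extraction via orthonormality and $\mathbb{Z}[t,t^{-1}]$-integrality of $(\cdot,\cdot)_{0}$, and the top/bottom-degree argument to solve $\sum_{b}c_{b}^{2}=1$ in $\mathbb{Z}[t,t^{-1}]$ are exactly the intended steps, and your observation that the degree comparison is the one genuinely new ingredient over the one-parameter case is well placed.
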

From Proposition 6.6, 6.7 and 7.1, we can get the following corollary.
\begin{corollary}
$L_{\mathbb{Z}}(\infty)^{*}=L_{\mathbb{Z}}(\infty)$ and $B(\infty)^{*}\subset B(\infty)\cup (-B(\infty)).$
\end{corollary}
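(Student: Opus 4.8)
The plan is to deduce both assertions formally from three earlier results: the $*$-invariance of $L(\infty)$ (Proposition 6.7), the isometry identity $(P^{*},Q^{*})=(P,Q)$ (Proposition 6.6), and the characterization of $B(\infty)\cup(-B(\infty))$ by the value of the polarization at $v=0$ (Proposition 7.1(ii)).

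First I would prove $L_{\mathbb{Z}}(\infty)^{*}=L_{\mathbb{Z}}(\infty)$. Recall that $*$ is an involutive anti-automorphism fixing each $f_{i}$ and $v$ while sending $t\mapsto t^{-1}$; in particular it restricts to a bijection of $U_{v,t}^{-}(\mathfrak{g})$ onto itself and therefore commutes with the formation of intersections. Since $L_{\mathbb{Z}}(\infty)=L(\infty)\cap U_{\mathbb{Z}}^{-}(\mathfrak{g})$, I would write
\[
L_{\mathbb{Z}}(\infty)^{*}=L(\infty)^{*}\cap\bigl(U_{\mathbb{Z}}^{-}(\mathfrak{g})\bigr)^{*}=L(\infty)\cap U_{\mathbb{Z}}^{-}(\mathfrak{g})=L_{\mathbb{Z}}(\infty),
\]
using $L(\infty)^{*}=L(\infty)$ from Proposition 6.7 and the $*$-stability of $U_{\mathbb{Z}}^{-}(\mathfrak{g})$ recorded at the start of Section 7.1.

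Next, because $v^{*}=v$, the submodule $vL_{\mathbb{Z}}(\infty)$ is also $*$-stable, so $*$ descends to a map on $L_{\mathbb{Z}}(\infty)/vL_{\mathbb{Z}}(\infty)$ (additive and semilinear for $t\mapsto t^{-1}$ over $\mathbb{Z}[t,t^{-1}]$). For the second assertion let $b\in B(\infty)$. Then $b$ lies in $L_{\mathbb{Z}}(\infty)/vL_{\mathbb{Z}}(\infty)$ and $(b,b)_{0}=1$ by the orthonormality of $B(\infty)$ (Proposition 6.2(iii)). The identity of Proposition 6.6 holds in $\mathbb{Q}(v,t)$; specializing at $v=0$ gives $(x^{*},x^{*})_{0}=(x,x)_{0}$ on $L_{\mathbb{Z}}(\infty)/vL_{\mathbb{Z}}(\infty)$. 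Hence $(b^{*},b^{*})_{0}=(b,b)_{0}=1$, and Proposition 7.1(ii) forces $b^{*}\in B(\infty)\cup(-B(\infty))$, which is exactly the inclusion $B(\infty)^{*}\subset B(\infty)\cup(-B(\infty))$.

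The step requiring the most care is the compatibility of $*$ with reduction modulo $v$: one must confirm that $*$ genuinely preserves both $L_{\mathbb{Z}}(\infty)$ and $vL_{\mathbb{Z}}(\infty)$, and that the equality of rational functions in Proposition 6.6 survives specialization at $v=0$ even though $*$ acts on the coefficients through $t\mapsto t^{-1}$. Granting this, no further computation is needed and both statements follow at once.
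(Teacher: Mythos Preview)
Your proposal is correct and matches the paper's approach exactly: the paper does not spell out a proof but simply cites Propositions 6.6, 6.7 and 7.1, and what you have written is precisely the natural way to combine those three inputs. Your flagged concern about $t\mapsto t^{-1}$ is harmless here, since $(b,b)_{0}=1\in\mathbb{Z}$ is fixed by that substitution, so $(b^{*},b^{*})_{0}=1$ in any case and Proposition~7.1(ii) applies.
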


In fact, we have the following theorem
\begin{theorem}
$B(\infty)^{*}= B(\infty)$
\end{theorem}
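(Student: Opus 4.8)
The plan is to extract the full statement from Corollary 7.2 by pinning down a sign. By that corollary, for every $b\in B(\infty)$ there is a unique $b^{\flat}\in B(\infty)$ and a sign $\eta(b)\in\{\pm 1\}$ with $b^{*}=\eta(b)\,b^{\flat}$ in $L(\infty)/vL(\infty)$, so the theorem amounts to proving $\eta\equiv 1$. I would argue by induction on the height $|\xi|$ of the weight $\xi$ with $b\in B(\infty)_{\xi}$. The base case is immediate, since $1^{*}=1$ shows $\eta=1$ on the single element of $B(\infty)_{0}$. Throughout, I use the starred Kashiwara operators $\tilde{e}_{i}^{*}=*\circ\tilde{e}_{i}\circ *$ and $\tilde{f}_{i}^{*}=*\circ\tilde{f}_{i}\circ *$, which preserve $L(\infty)$ because $L(\infty)^{*}=L(\infty)$ (Proposition 7.7) together with $\tilde{e}_{i}L(\infty),\tilde{f}_{i}L(\infty)\subset L(\infty)$, and which therefore descend to the $\mathbb{Q}(t)$-space $L(\infty)/vL(\infty)$.

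For the inductive step, take $b\in B(\infty)_{\xi}$ with $\xi\neq 0$ and choose $i$ with $\tilde{e}_{i}b\neq 0$. Setting $c=\tilde{e}_{i}b\in B(\infty)_{\xi+\alpha_{i}}$, the relation $\mathbf{N}$ established in Section 5 gives $b=\tilde{f}_{i}c$, and $c$ has strictly smaller height, so $c^{*}=c^{\flat}\in B(\infty)$ by the induction hypothesis. Lifting to representatives and applying $*$ to $b=\tilde{f}_{i}c$ yields $b^{*}=\tilde{f}_{i}^{*}(c^{*})$ in $L(\infty)/vL(\infty)$, which is nonzero because $*$ descends to an involution of $L(\infty)/vL(\infty)$ and $b^{*}\neq 0$. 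Thus $\eta\equiv 1$ will follow at once from the single assertion that $\tilde{f}_{i}^{*}$ carries $B(\infty)$ into $B(\infty)\cup\{0\}$ with positive sign.

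The crux, and the step I expect to be the main obstacle, is exactly this bicrystal compatibility: that the right-handed operators $\tilde{e}_{i}^{*},\tilde{f}_{i}^{*}$ stabilize the left crystal $B(\infty)$ modulo $vL(\infty)$ and introduce no minus signs. Two ingredients make this tractable in the two-parameter setting. First, there is a rank-one anchor: because the crystal operators are normalized by the one-parameter divided powers $f_{i}^{(n)}=f_{i}^{n}/[n]_{v_{i}}^{!}$, and $[n]_{v_{i}}^{!}\in\mathbb{Q}(v)$ is fixed by $*$ while $(f_{i}^{n})^{*}=f_{i}^{n}$, one has $(f_{i}^{(n)})^{*}=f_{i}^{(n)}$; hence on the rank-one subobject generated by $1$ under $U_{v,t}(\mathfrak{g}_{i})$ the operator $\tilde{f}_{i}^{*}$ coincides with $\tilde{f}_{i}$ and acts with trivial signs. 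This is precisely where the potential $t$-power discrepancies cancel (they would not cancel for the square-bracket powers $f_{i}^{[n]}$). Second, for $i\neq j$ the left operators $\tilde{e}_{i},\tilde{f}_{i}$ commute on $B(\infty)$ with the right operators $\tilde{e}_{j}^{*},\tilde{f}_{j}^{*}$, a statement whose algebraic origin is the commutation relation $e_{i}'e_{j}''=v^{i\cdot j}t^{-\langle i,j\rangle+\langle j,i\rangle}e_{j}''e_{i}'$ of Proposition 4.2.

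To make the compatibility precise I would run a $*$-symmetric version of the grand loop of Section 5, tracking the right operators alongside the left ones. By the left-right symmetry of the whole construction—the anti-automorphism $*$ fixes every $f_{i}$ and interchanges left with right multiplication, and by Lemma 6.5 it turns $e_{i}'$ into the right-hand operator $Ad(k_{i})e_{i}''$—this loop produces exactly the operators $\tilde{e}_{i}^{*},\tilde{f}_{i}^{*}$ acting on the same lattice $L(\infty)=L(\infty)^{*}$. As in Section 5, only the scalar factors (now powers of $v$ and $t$) differ from Kashiwara's one-parameter computation, while the combinatorial backbone is unchanged, so the two-parameter modifications are checked exactly as before. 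Feeding $\tilde{f}_{i}^{*}B(\infty)\subset B(\infty)\cup\{0\}$ back into the induction gives $b^{*}\in B(\infty)$, hence $\eta(b)=1$; since $*$ is an involution, $B(\infty)^{*}=B(\infty)$ follows.
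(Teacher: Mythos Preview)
Your reduction is on target and matches the paper's setup: both arguments introduce $\tilde{e}_i^{*}=*\tilde{e}_i*$, $\tilde{f}_i^{*}=*\tilde{f}_i*$, note they preserve $L(\infty)$, and reduce the theorem to the assertion that the $\tilde{f}_i^{*}$ carry $B(\infty)$ into $B(\infty)$ rather than into $-B(\infty)$. The gap is in how you propose to establish that assertion. A ``$*$-symmetric grand loop'' proves only that $(L(\infty),B(\infty)^{*})$ is a crystal basis for the right-handed operators; this is literally the $*$-image of Section~5 and gives $\tilde{f}_i^{*}B(\infty)^{*}\subset B(\infty)^{*}$, not $\tilde{f}_i^{*}B(\infty)\subset B(\infty)$. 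Nothing in the loop identifies the two bases. Your second ingredient does not help either: the commutation $e_i'e_j''=v^{i\cdot j}t^{\langle j,i\rangle-\langle i,j\rangle}e_j''e_i'$ of Proposition~4.2 is an identity of endomorphisms, but promoting it to a commutation of $\tilde{f}_i$ with $\tilde{f}_j^{*}$ \emph{on $B(\infty)$} already presupposes that $B(\infty)$ is stable under both families, which is what you are trying to prove.

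The paper pins down the sign by a concrete module computation that your proposal has no substitute for. Given $b\in B(\infty)$ with $\tilde{e}_i^{*}b=0$, lift to $P\in L(\infty)$ with $e_i''P=0$; then $\tilde{f}_i^{*m}b\equiv Pf_i^{(m)}\bmod vL(\infty)$. Now pick $\lambda\in P_{+}$ with $\langle h_i,\lambda\rangle=0$ and $\langle h_j,\lambda\rangle\gg0$ for $j\neq i$, and $\mu\gg0$. The choice $\langle h_i,\lambda\rangle=0$ forces $f_i^{(m)}(y_\lambda\otimes y_\mu)=y_\lambda\otimes f_i^{(m)}y_\mu$, and the coproduct gives that the relevant weight component of $Pf_i^{(m)}(y_\lambda\otimes y_\mu)$ is $Py_\lambda\otimes f_i^{(m)}y_\mu$, which lies in $B(\lambda)\otimes B(\mu)$ with the \emph{plus} sign. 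Since Corollary~7.2 and Proposition~7.1(ii) already place $Pf_i^{(m)}+vL(\infty)$ in $\pm B(\infty)$, comparison in $V(\lambda)\otimes V(\mu)$ forces the sign to be $+1$. This tensor-product trick with the degenerate weight $\lambda$ is the missing idea.
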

\begin{proof}
We first define the operators $\tilde{e}_{i}^{*}$ and $\tilde{f}_{i}^{*}$ of $U_{v, t}^{-}(\mathfrak{g})$ by $\tilde{e}_{i}^{*}=*\tilde{e}_{i}*$ and $\tilde{f}_{i}^{*}=*\tilde{f}_{i}*.$ Then $L(\infty)$ and $L_{\mathbb{Z}}(\infty)$ are stable by $\tilde{e}_{i}^{*}$ and $\tilde{f}_{i}^{*}.$ By Lemma 6.5, we have $*e_{i}'*=Ad(k_{i})e_{i}''.$ Hence for any $P=\sum P_{n}f_{i}^{(n)}$ with $e_{i}''P_{n}=0,$ we have $P^{*}=\sum f_{i}^{(n)}P_{n}^{*}$ with $e_{i}'(P_{n}^{*})=0,$ whence $\tilde{f}_{i}^{*} P=\sum P_{n}f_{i}^{(n+1)}.$

By the induction of weight, it is enough to show that $b\in B(\infty),$ $\tilde{e}_{i}^{*}b=0$ implies $\tilde{f}_{i}^{*m}b\in B(\infty)$ for $m\geq 0.$ Take a representative $P\in L(\infty)$ of $b$ with $e_{i}''P=0.$ By the above claim, we get $\tilde{f}_{i}^{*m}b\equiv Pf_{i}^{(m)}~\mathrm{mod}~vL(\infty).$

Choose $\lambda\in P_{+}$ such that $\langle h_{i}, \lambda\rangle=0$ and $\langle h_{j}, \lambda\rangle\gg 0$ for any $j\neq i$ and $\mu\in P_{+}$ such that $\langle h_{j}, \mu\rangle\gg 0$ for any $j.$ We have $f_{i}^{(m)}(y_{\lambda}\otimes y_{\mu})=y_{\lambda}\otimes f_{i}^{(m)}y_{\mu}.$ On the other hand, $\Delta(P)=P\otimes 1~\mathrm{mod}~\sum_{\xi\neq 0}U_{v,t}(\mathfrak{g})\otimes U_{v,t}^{-}(\mathfrak{g})_{\xi}$ implies $$Pf_{i}^{(m)}(y_{\lambda}\otimes y_{\mu})=Py_{\lambda}\otimes f_{i}^{(m)}y_{\mu}~\mathrm{mod}~\sum_{\xi\neq \mu-m\alpha_{i}}V(\lambda)\otimes V(\mu)_{\xi}.$$ By Proposition 7.1$(\mathrm{ii})$, $Pf_{i}^{(m)}+vL(\infty)\in B(\infty)\cup (-B(\infty)).$ Accordingly, $Pf_{i}^{(m)}(y_{\lambda}\otimes y_{\mu})+vL(\lambda)\otimes L(\mu)$ belongs to $B(\lambda)\otimes B(\mu)\cup \{0\}$ or $-(B(\lambda)\otimes B(\mu))\cup \{0\}.$ Since $Py_{\lambda}\otimes f_{i}^{(m)}y_{\mu}+vL(\lambda)\otimes L(\mu)\in B(\lambda)\otimes B(\mu),$ we have $Pf_{i}^{(m)}(y_{\lambda}\otimes y_{\mu})+vL(\lambda)\otimes L(\mu)\in B(\lambda)\otimes B(\mu),$ which implies $Pf_{i}^{(m)}+vL(\infty)\in B(\infty).$ We obtain the desired result.
\end{proof}

For each $\lambda\in P_{+},$ we set $V_{\mathbb{Z}}(\lambda)=U_{\mathbb{Z}}^{-}(\mathfrak{g})y_{\lambda}.$ Then $V_{\mathbb{Z}}(\lambda)$ is a $U_{v,t}^{\mathbb{Z}}(\mathfrak{g})$-module. We also set, for $n\geq 0,$ $$(f_{i}^{n}V(\lambda))^{\mathbb{Z}}=(f_{i}^{n}U_{v, t}^{-}(\mathfrak{g}))^{\mathbb{Z}}y_{\lambda}=\sum_{k\geq n}f_{i}^{(k)}V_{\mathbb{Z}}(\lambda).$$ Note that $V_{\mathbb{Z}}(\lambda)$ and $(f_{i}^{n}V(\lambda))^{\mathbb{Z}}$ are stable under $-,$ where $-$ is the involutive automorphism of $V(\lambda)$ defined by $\overline{Py_{\lambda}}=\overline{P}y_{\lambda}$ for $P\in U_{v, t}^{-}(\mathfrak{g}).$

Let $L_{\mathbb{Z}}(\lambda)=L(\lambda)\cap V_{\mathbb{Z}}(\lambda).$ Since $\pi_{\lambda}(L(\infty))=L(\lambda),$ we have $\pi_{\lambda}(L_{\mathbb{Z}}(\infty))\subset L_{\mathbb{Z}}(\lambda),$ and hence $B(\lambda) \subset L_{\mathbb{Z}}(\lambda)/vL_{\mathbb{Z}}(\lambda) \subset L(\lambda)/vL(\lambda).$ It is proved similarly as in Proposition 7.1 that $L_{\mathbb{Z}}(\lambda)/vL_{\mathbb{Z}}(\lambda)$ is a free $\mathbb{Z}[t, t^{-1}]$-module with a basis $B(\lambda)$.

\begin{proposition}
Let $M$ be an integrable $U_{v,t}(\mathfrak{g})$-module and let $M_{\mathbb{Z}}$ be a $U_{v,t}^{\mathbb{Z}}(\mathfrak{g})$-submodule of $M.$ Let $\lambda\in P_{+}$ and $i\in I.$ Suppose that $n=-\langle h_{i}, \lambda\rangle \geq 0.$ Then $$(M_{\mathbb{Z}})_{\lambda}=\sum_{k\geq n}f_{i}^{(k)}(M_{\mathbb{Z}})_{\lambda+k\alpha_{i}}.$$
\end{proposition}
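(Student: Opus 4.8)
The plan is to reduce to rank one and then to the integral $U_v(\mathfrak{sl}_2)$-theory. First I would restrict to the subalgebra $U_{v,t}(\mathfrak{g}_i)\cong U_{v,t}(\mathfrak{sl}_2)$. Since $e_i$ and $f_i$ move weights only in the $\alpha_i$-direction, the subspace $\bigoplus_{k\in\mathbb{Z}}M_{\lambda+k\alpha_i}$ is an integrable $U_{v,t}(\mathfrak{g}_i)$-module and $\bigoplus_{k}(M_{\mathbb{Z}})_{\lambda+k\alpha_i}$ is a $U_{v,t}^{\mathbb{Z}}(\mathfrak{g}_i)$-submodule of it, so it is enough to treat $\mathfrak{g}=\mathfrak{sl}_2$. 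The inclusion $\supseteq$ is immediate, because each $f_i^{(k)}$ is among the generators of $U_{v,t}^{\mathbb{Z}}(\mathfrak{g})$ and hence preserves $M_{\mathbb{Z}}$; all the content is in the reverse inclusion.

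For $\subseteq$, I would start from the $i$-string decomposition $y=\sum_k f_i^{(k)}y_k$ of a given $y\in(M_{\mathbb{Z}})_\lambda$ over $\mathbb{Q}(v,t)$, with $y_k\in\ker e_i\cap M_{\lambda+k\alpha_i}$. A nonzero $y_k$ requires $k\le\langle h_i,\lambda+k\alpha_i\rangle=-n+2k$, i.e. $k\ge n$, so only indices $k\ge n$ occur; this already yields the assertion over $\mathbb{Q}(v,t)$ and shows $y\in f_i^{\,n}M$. To obtain integrality I would feed $y$ through the commutation law for $e_i^{(m)}f_i^{(k)}$ in $U_{v,t}^{\mathbb{Z}}(\mathfrak{g})$: since $e_iy_k=0$, applying $e_i^{(m)}$ collapses the sum to $e_i^{(m)}y=\sum_{k\ge\max(n,m)}\left[\begin{smallmatrix}k+m-n\\ m\end{smallmatrix}\right]_{v_i}f_i^{(k-m)}y_k$, an element of $(M_{\mathbb{Z}})_{\lambda+m\alpha_i}$. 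Hence for each $m\ge n$ the element $f_i^{(m)}e_i^{(m)}y$ lies in $f_i^{(m)}(M_{\mathbb{Z}})_{\lambda+m\alpha_i}$, and therefore in the right-hand side; letting $m$ run from $n$ up to the length of the string produces a triangular family of elements of the right-hand side whose leading coefficients are $v_i$-Gaussian binomials.

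The main obstacle is the final recombination of this family into $y$ itself. The elements $f_i^{(m)}e_i^{(m)}y$ a priori span only a proper sublattice, and recovering $y$ with coefficient $1$ forces one to know that the relevant Gaussian binomial coefficients generate the unit ideal of $\mathcal{A}=\mathbb{Z}[v^{\pm1},t^{\pm1}]$ — a coprimality property of quantum integers, analogous to $\gcd\{\binom{m}{k}\}=1$. This is exactly why one must sum over all $k\ge n$: the naive attempt to extract a single primitive component fails in general, as one already sees for suitable non-standard integral forms in the $\mathfrak{sl}_2$ case, where $(M_{\mathbb{Z}})_{\lambda+k\alpha_i}$ genuinely contributes non-primitive vectors. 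I would establish the required coprimality by descending induction on the string length. Finally, since under $U_{v,t}(\mathfrak{g}_i)\cong U_v(\mathfrak{sl}_2)\otimes_{\mathbb{Q}(v)}\mathbb{Q}(v,t)$ the divided powers $e_i^{(n)},f_i^{(n)}$ and the Cartan brackets $\left[\begin{smallmatrix}k_i;0\\ n\end{smallmatrix}\right]$ agree with their one-parameter counterparts up to $t$-power factors that are units in $\mathcal{A}$, the entire argument is formally identical to Kashiwara's in the one-parameter case, from which the proposition follows.
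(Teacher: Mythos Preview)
Your outline is essentially correct and ends up at the same place as the paper: the paper's entire proof is the sentence ``This follows immediately from the following lemma,'' where the lemma is the explicit identity
\[
y=\sum_{k\geq n}(-1)^{k-n}v_{i}^{kn}\left[\hspace*{-0.1cm}\begin{array}{c}k-1 \\ k-n\end{array}\hspace*{-0.1cm}\right]_{v_{i}}f_{i}^{(k)}e_{i}^{(k)}k_{i}'^{-k}y
\qquad (y\in M_{\lambda}),
\]
checked by plugging in the $i$-string decomposition of $y$ and reducing to a Gaussian binomial identity (quoted from [Kas2, (6.1.19)]). Since $e_{i}^{(k)}k_{i}'^{-k}y\in (M_{\mathbb{Z}})_{\lambda+k\alpha_{i}}$ and the coefficients lie in $\mathcal{A}$, the inclusion $(M_{\mathbb{Z}})_{\lambda}\subseteq\sum_{k\geq n}f_{i}^{(k)}(M_{\mathbb{Z}})_{\lambda+k\alpha_{i}}$ is immediate. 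This is precisely Kashiwara's one-parameter proof with the harmless insertion of $k_{i}'^{-k}$ to absorb the $t$-powers, so your final sentence is exactly right.

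Where your write-up diverges is the middle paragraph. You frame the recombination step as a ``coprimality of Gaussian binomials'' problem to be solved by descending induction on string length. That description is imprecise: the triangular system $f_{i}^{(m)}e_{i}^{(m)}y\leadsto\sum_{k\ge m}c_{m,k}f_{i}^{(k)}y_{k}$ has diagonal entries $c_{m,m}=\left[\begin{smallmatrix}2m-n\\ m\end{smallmatrix}\right]_{v_i}$ which are \emph{not} units in $\mathcal{A}$, so neither ``inverting the triangular system'' nor ``coprimality of the diagonal entries'' is the right picture. What is actually needed is the single $\mathcal{A}$-linear combination of the $f_{i}^{(m)}e_{i}^{(m)}k_{i}'^{-m}y$ that returns $y$ on the nose, and the paper simply writes that combination down. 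Your proposal would be cleaner if you dropped the coprimality/induction discussion and went straight to the explicit formula, which is what your appeal to Kashiwara ultimately does anyway.
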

This follows immediately from the following lemma.

\begin{lemma}
When $n\geq 1,$ we have $y=\sum_{k\geq n}(-1)^{k-n}v_{i}^{kn}\left[\hspace*{-0.1cm}\begin{array}{c}k-1 \\
k-n
\end{array}\hspace*{-0.1cm}\right]_{v_{i}}f_{i}^{(k)}e_{i}^{(k)}k_{i}'^{-k}y$ for any $y\in M_{\lambda}.$
\end{lemma}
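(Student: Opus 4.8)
The plan is to reduce the identity to the rank-one subalgebra $U_{v,t}(\mathfrak{g}_i)\cong U_v(\mathfrak{sl}_2)\otimes_{\mathbb{Q}(v)}\mathbb{Q}(v,t)$ attached to $i$, and then to observe that the second parameter $t$ drops out entirely, leaving a one-parameter $q$-binomial identity. Since $M$ is integrable, $e_i$ is locally nilpotent, so $e_i^{(k)}y=0$ for $k\gg0$ and the sum on the right is finite. Both sides are $\mathbb{Q}(v,t)$-linear in $y$, so I would fix $y\in M_\lambda$ and pass to its $i$-string decomposition $y=\sum_m f_i^{(m)}y_m$ with $y_m\in\mathrm{Ker}\,e_i\cap M_{\lambda+m\alpha_i}$, as in Section 3.1. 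Because $\langle h_i,\lambda\rangle=-n$, the summation range $0\le m\le\langle h_i,\lambda+m\alpha_i\rangle=2m-n$ forces $m\ge n$; thus $y=\sum_{m\ge n}f_i^{(m)}y_m$ and it suffices to verify the identity on a single term $f_i^{(m)}y_m$ with $m\ge n$.

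First I would compute $f_i^{(k)}e_i^{(k)}k_i'^{-k}$ on $f_i^{(m)}y_m$. As $f_i^{(m)}y_m\in M_\lambda$ and $k_i'$ acts on $M_\lambda$ by the scalar $v_i^{\,n}t^{\langle i,\lambda\rangle-\langle\lambda,i\rangle}$ (using $v^{-i\cdot\lambda}=v_i^{-\langle h_i,\lambda\rangle}=v_i^{\,n}$), we get $k_i'^{-k}f_i^{(m)}y_m=v_i^{-nk}t^{-k(\langle i,\lambda\rangle-\langle\lambda,i\rangle)}f_i^{(m)}y_m$. Next, the rank-one relation computed in the proofs of Lemma 5.4 and Proposition 6.1 gives
$$e_i^{(k)}f_i^{(m)}y_m=\left[\begin{array}{c}k+m-n\\ k\end{array}\right]_{v_i}t^{\,k(\langle i,\lambda\rangle-\langle\lambda,i\rangle)}f_i^{(m-k)}y_m,$$
and $f_i^{(k)}f_i^{(m-k)}=\left[\begin{array}{c}m\\ k\end{array}\right]_{v_i}f_i^{(m)}$. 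The crucial point is that the two powers of $t$ cancel: $k_i$ and $k_i'$ carry the same $t$-exponent on every weight space and this exponent is constant along an $i$-string (it changes by $\langle i,\alpha_i\rangle-\langle\alpha_i,i\rangle=\Lambda_{ii}-\Lambda_{ii}=0$), so the factor from $k_i'^{-k}$ exactly undoes the one produced by $e_i^{(k)}$. Hence
$$f_i^{(k)}e_i^{(k)}k_i'^{-k}\bigl(f_i^{(m)}y_m\bigr)=v_i^{-nk}\left[\begin{array}{c}k+m-n\\ k\end{array}\right]_{v_i}\left[\begin{array}{c}m\\ k\end{array}\right]_{v_i}f_i^{(m)}y_m,$$
a purely one-parameter expression.

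Substituting this into the right-hand side and cancelling $v_i^{kn}v_i^{-nk}=1$, the coefficient of $f_i^{(m)}y_m$ becomes
$$\sum_{k=n}^{m}(-1)^{k-n}\left[\begin{array}{c}k-1\\ k-n\end{array}\right]_{v_i}\left[\begin{array}{c}k+m-n\\ k\end{array}\right]_{v_i}\left[\begin{array}{c}m\\ k\end{array}\right]_{v_i},$$
so the lemma is equivalent to the assertion that this sum equals $1$ for all $m\ge n\ge1$. This is precisely the $q$-binomial identity underlying the one-parameter statement in [Kas2], with $q=v_i$, and I would establish it by induction on $m$: the base case $m=n$ is the single term $1$, the case $m=n+1$ is the familiar $[n+1]_{v_i}^2-[n]_{v_i}[n+2]_{v_i}=1$, and the inductive step follows from the Pascal recursions $\left[\begin{array}{c}m+1\\ k\end{array}\right]_{v_i}=v_i^{k}\left[\begin{array}{c}m\\ k\end{array}\right]_{v_i}+\left[\begin{array}{c}m\\ k-1\end{array}\right]_{v_i}$ together with the companion recursion for $\left[\begin{array}{c}k+m-n\\ k\end{array}\right]_{v_i}$.

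I expect the representation-theoretic half---reducing to one $i$-string and checking that all $t$-factors cancel---to be routine, given the rank-one computations already in the paper. The only real content, and the place where a bookkeeping slip is most likely, is the purely combinatorial $q$-identity in the last display; I would organize the argument so that this is quoted from, or reduced to, the known one-parameter identity rather than re-derived from scratch.
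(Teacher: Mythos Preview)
Your proposal is correct and follows essentially the same route as the paper: reduce by linearity and the $i$-string decomposition to a single term $f_i^{(m)}z$ with $e_iz=0$ and $m\ge n$, compute $v_i^{kn}f_i^{(k)}e_i^{(k)}k_i'^{-k}f_i^{(m)}z$, observe that the $t$-factors from $k_i'^{-k}$ and from $e_i^{(k)}f_i^{(m)}z$ cancel (since $\langle i,\alpha_i\rangle=\langle\alpha_i,i\rangle$), and arrive at the same product of three $v_i$-binomials summed over $k$. The only difference is cosmetic: the paper commutes $k_i'^{-k}$ past $f_i^{(m)}$ before evaluating on $z$, whereas you evaluate $k_i'^{-k}$ directly on the weight space $M_\lambda$; and for the resulting $q$-binomial identity the paper simply cites [Kas2, (6.1.19)] rather than redoing the induction you sketch.
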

\begin{proof}
We may assume $y=f_{i}^{(m)}z$ with $z\in \mathrm{Ker}~e_{i}\cap M_{\lambda+m\alpha_{i}}$ and $m\geq n.$ Since we have $v_{i}^{kn}k_{i}'^{-k}f_{i}^{(m)}z=v_{i}^{kn-2km}f_{i}^{(m)}k_{i}'^{-k}z=v_{i}^{kn-2km}f_{i}^{(m)}v^{ki\cdot(\lambda+m\alpha_{i})}t^{-k(\langle i, \lambda\rangle-\langle\lambda, i\rangle)}z=t^{-k(\langle i, \lambda\rangle-\langle\lambda, i\rangle)}f_{i}^{(m)}z,$ we obtain
\begin{align*}
\hspace*{-1.6cm}&\sum_{k\geq n}(-1)^{k-n}v_{i}^{kn}\left[\hspace*{-0.1cm}\begin{array}{c}k-1 \\
k-n
\end{array}\hspace*{-0.1cm}\right]_{v_{i}}f_{i}^{(k)}e_{i}^{(k)}k_{i}'^{-k}y\end{align*}\vskip-6mm
\begin{align*}
&=\sum_{k\geq n}(-1)^{k-n}\left[\hspace*{-0.1cm}\begin{array}{c}k-1 \\
k-n
\end{array}\hspace*{-0.1cm}\right]_{v_{i}}t^{-k(\langle i, \lambda\rangle-\langle\lambda, i\rangle)}f_{i}^{(k)}e_{i}^{(k)}f_{i}^{(m)}z\\
&=\sum_{k=n}^{m}(-1)^{k-n}\left[\hspace*{-0.1cm}\begin{array}{c}k-1 \\
k-n
\end{array}\hspace*{-0.1cm}\right]_{v_{i}}f_{i}^{(k)}\left[\hspace*{-0.1cm}\begin{array}{c}k-m+(2m-n) \\
k
\end{array}\hspace*{-0.1cm}\right]_{v_{i}}f_{i}^{(m-k)}z\\
&=\sum_{k=n}^{m}(-1)^{k-n}\left[\hspace*{-0.1cm}\begin{array}{c}k-1 \\
k-n
\end{array}\hspace*{-0.1cm}\right]_{v_{i}}\left[\hspace*{-0.1cm}\begin{array}{c}k+m-n \\
k
\end{array}\hspace*{-0.1cm}\right]_{v_{i}}\left[\hspace*{-0.1cm}\begin{array}{c}m \\
k
\end{array}\hspace*{-0.1cm}\right]_{v_{i}}f_{i}^{(m)}z\\
&=y.
\end{align*}
The last identity follows from [Kas2, (6.1.19)].
\end{proof}

\subsection{Existence of global crystal bases}

We first give two lemmas, whose proof is similar to that of [Kas2, Lemma 7.1.1 and 7.1.2]. Recall that $\mathbf{A}$ is the subring of $\mathbb{Q}(v,t)$ consisting of rational functions without poles at $v=0.$ Let $\bar{\mathbf{A}}$ be the subring of $\mathbb{Q}(v,t)$ consisting of rational functions without poles at $v=\infty.$

\begin{lemma}
Let $V$ be a finite-dimensional vector space over $\mathbb{Q}(v,t),$ $V_{\mathbb{Z}}$ an $\mathcal{A}$-submodule of $V,$ $L_{0}$ a free $\mathbf{A}$-submodule of $V,$ and $L_{\infty}$ a free $\bar{\mathbf{A}}$-submodule of $V$ such that $V\cong \mathbb{Q}(v,t)\otimes_{\mathcal{A}} V_{\mathbb{Z}}\cong\mathbb{Q}(v,t)\otimes_{\mathbf{A}} L_{0}\cong \mathbb{Q}(v,t)\otimes_{\bar{\mathbf{A}}} L_{\infty}.$

$(\mathrm{i})$ Suppose that $V_{\mathbb{Z}}\cap L_{0}\cap L_{\infty}\rightarrow (V_{\mathbb{Z}}\cap L_{0})/(V_{\mathbb{Z}}\cap vL_{0})$ is an isomorphism. Then $$V_{\mathbb{Z}}\cap L_{0}\cong \mathbb{Z}[v, t^{\pm 1}]\otimes_{\mathbb{Z}[t^{\pm 1}]}(V_{\mathbb{Z}}\cap L_{0}\cap L_{\infty}),$$
$$V_{\mathbb{Z}}\cap L_{\infty}\cong \mathbb{Z}[v^{-1}, t^{\pm 1}]\otimes_{\mathbb{Z}[t^{\pm 1}]}(V_{\mathbb{Z}}\cap L_{0}\cap L_{\infty}),$$
$$V_{\mathbb{Z}}\cong \mathbb{Z}[v^{\pm 1}, t^{\pm 1}]\otimes_{\mathbb{Z}[t^{\pm 1}]}(V_{\mathbb{Z}}\cap L_{0}\cap L_{\infty}),$$
$$V_{\mathbb{Z}}\cap L_{0}\cap L_{\infty}\xrightarrow{\sim} (V_{\mathbb{Z}}\cap L_{\infty})/(V_{\mathbb{Z}}\cap v^{-1}L_{\infty}),$$\vskip-10mm
\begin{align*}(\mathbb{Q}\otimes_{\mathbb{Z}} V_{\mathbb{Z}})\cap L_{0}\cap L_{\infty}&\cong \mathbb{Q}\otimes_{\mathbb{Z}} (V_{\mathbb{Z}}\cap L_{0}/V_{\mathbb{Z}}\cap vL_{0})\\&\cong (\mathbb{Q}(v)[t^{\pm 1}]\otimes_{\mathcal{A}} V_{\mathbb{Z}})\cap L_{0}/(\mathbb{Q}(v)[t^{\pm 1}]\otimes_{\mathcal{A}} V_{\mathbb{Z}})\cap vL_{0}.\end{align*}

$(\mathrm{ii})$ Let $E$ be a $\mathbb{Z}[t, t^{-1}]$-module and $\varphi: E\rightarrow V_{\mathbb{Z}}\cap L_{0}\cap L_{\infty}$ a $\mathbb{Z}[t, t^{-1}]$-linear homomorphism. Assume that\vskip1mm
$(\mathrm{a})$ $V_{\mathbb{Z}}=\mathcal{A}\varphi(E)$ and

$(\mathrm{b})$ $E\rightarrow L_{0}/vL_{0}$ and $E\rightarrow L_{\infty}/v^{-1}L_{\infty}$ are isomorphisms.\vskip1mm
\noindent Then we have $E\rightarrow V_{\mathbb{Z}}\cap L_{0}\cap L_{\infty}\rightarrow V_{\mathbb{Z}}\cap L_{0}/M\cap vL_{0}$ are isomorphisms.
\end{lemma}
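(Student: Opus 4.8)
The plan is to transcribe Kashiwara's proof of [Kas2, Lemma 7.1.1 and 7.1.2] step by step, observing that each step uses only a ring-theoretic decomposition that survives into the two-parameter setting once the constant field $\mathbb{Q}$ is replaced by the coefficient ring $\mathbb{Z}[t^{\pm 1}]$. Concretely, the backbone is that $\mathbf{A}$ is a discrete valuation ring with uniformizer $v$ and residue field $\mathbb{Q}(t)$, that $\bar{\mathbf{A}}$ is likewise a DVR with uniformizer $v^{-1}$ and residue field $\mathbb{Q}(t)$, and that inside $\mathcal{A}=\mathbb{Z}[v^{\pm1},t^{\pm1}]$ one has $\mathbf{A}\cap\mathcal{A}=\mathbb{Z}[v,t^{\pm1}]$, $\bar{\mathbf{A}}\cap\mathcal{A}=\mathbb{Z}[v^{-1},t^{\pm1}]$, together with the direct-sum splitting $\mathcal{A}=\mathbb{Z}[v,t^{\pm1}]\oplus v^{-1}\mathbb{Z}[v^{-1},t^{\pm1}]$ and $\mathbf{A}\cap\bar{\mathbf{A}}\cap\mathcal{A}=\mathbb{Z}[t^{\pm1}]$. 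These are precisely the identities Kashiwara exploits, so the formal skeleton of the argument carries over. I would begin by recording them, together with the elementary commensurability fact that $V_{\mathbb{Z}}\cap L_0$, $V_{\mathbb{Z}}\cap L_\infty$ and $E=V_{\mathbb{Z}}\cap L_0\cap L_\infty$ are full lattices in $V$.

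For part $(\mathrm{i})$, I first note that, $v$ being a unit of $\mathcal{A}$, one has $V_{\mathbb{Z}}\cap vL_0=v(V_{\mathbb{Z}}\cap L_0)$ and symmetrically $V_{\mathbb{Z}}\cap v^{-1}L_\infty=v^{-1}(V_{\mathbb{Z}}\cap L_\infty)$. Using the hypothesis that $E\to(V_{\mathbb{Z}}\cap L_0)/(V_{\mathbb{Z}}\cap vL_0)$ is an isomorphism, I would lift a family in $E$ representing a $\mathbb{Q}(t)$-basis of $L_0/vL_0$ and apply Nakayama over the local ring $\mathbf{A}$ to conclude it is an $\mathbf{A}$-basis of $L_0$; combined with integrality this yields the first isomorphism $V_{\mathbb{Z}}\cap L_0\cong\mathbb{Z}[v,t^{\pm1}]\otimes_{\mathbb{Z}[t^{\pm1}]}E$. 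Feeding this into the splitting $\mathcal{A}=\mathbb{Z}[v,t^{\pm1}]\oplus v^{-1}\mathbb{Z}[v^{-1},t^{\pm1}]$ then peels off $V_{\mathbb{Z}}\cong\mathcal{A}\otimes_{\mathbb{Z}[t^{\pm1}]}E$ and, running the same argument at $v=\infty$, gives $V_{\mathbb{Z}}\cap L_\infty\cong\mathbb{Z}[v^{-1},t^{\pm1}]\otimes_{\mathbb{Z}[t^{\pm1}]}E$ along with the symmetric reduction isomorphism $E\xrightarrow{\sim}(V_{\mathbb{Z}}\cap L_\infty)/(V_{\mathbb{Z}}\cap v^{-1}L_\infty)$. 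The final displayed double isomorphism is obtained by applying the flat base change $\mathbb{Z}[t^{\pm1}]\to\mathbb{Q}(v)[t^{\pm1}]$, respectively tensoring with $\mathbb{Q}$ over $\mathbb{Z}$, to the identifications already in hand.

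For part $(\mathrm{ii})$ the strategy is to show that conditions $(\mathrm{a})$ and $(\mathrm{b})$ force $\varphi$ to identify $E$ with the balanced lattice $V_{\mathbb{Z}}\cap L_0\cap L_\infty$, thereby reducing everything to $(\mathrm{i})$. Indeed $(\mathrm{b})$ at $v=0$ produces, after lifting and Nakayama, an $\mathbf{A}$-basis of $L_0$ out of $\varphi(E)$, while $(\mathrm{a})$ gives $V_{\mathbb{Z}}=\mathcal{A}\varphi(E)$; intersecting $V_{\mathbb{Z}}$ with $L_0$ and with $L_\infty$ and invoking the splitting above shows $\varphi$ is injective with image exactly $V_{\mathbb{Z}}\cap L_0\cap L_\infty$ and that $E\to(V_{\mathbb{Z}}\cap L_0)/(V_{\mathbb{Z}}\cap vL_0)$ is an isomorphism, which is precisely the hypothesis of $(\mathrm{i})$.

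The step I expect to be the main obstacle is surjectivity in the first isomorphism of $(\mathrm{i})$: one must prove that each $x\in V_{\mathbb{Z}}\cap L_0$ is a \emph{finite} $\mathbb{Z}[v,t^{\pm1}]$-combination of elements of $E$, rather than merely a $v$-adically convergent series produced by successive approximation modulo $v$. This is exactly where the $L_\infty$-condition must be used in tandem with the $L_0$-condition: membership in $L_0$ bounds $v$-degrees from below while membership in $L_\infty$ bounds them from above, and the Laurent-polynomial nature of $V_{\mathbb{Z}}$ then forces the expansion to terminate. A secondary subtlety, absent from Kashiwara's one-parameter picture, is that $\mathbb{Z}[t^{\pm1}]$ is not a principal ideal domain, so no structure theorem for finitely generated modules is available; every freeness and basis statement must instead be extracted from Nakayama over the discrete valuation rings $\mathbf{A}$ and $\bar{\mathbf{A}}$ and from the explicit splitting of $\mathcal{A}$, never from the coefficient ring itself.
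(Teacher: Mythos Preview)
Your proposal is correct and takes essentially the same approach as the paper: the paper does not give a proof but simply states that the argument is ``similar to that of [Kas2, Lemma 7.1.1 and 7.1.2]'', and your plan is precisely to transcribe Kashiwara's proof with $\mathbb{Q}$ replaced by $\mathbb{Z}[t^{\pm1}]$ throughout. You have correctly identified the ring-theoretic backbone (the DVR structure of $\mathbf{A}$ and $\bar{\mathbf{A}}$, the splitting $\mathcal{A}=\mathbb{Z}[v,t^{\pm1}]\oplus v^{-1}\mathbb{Z}[v^{-1},t^{\pm1}]$, and the intersection $\mathbf{A}\cap\bar{\mathbf{A}}\cap\mathcal{A}=\mathbb{Z}[t^{\pm1}]$), the successive-approximation mechanism, and the role of $L_\infty$ in terminating it; these are exactly the ingredients of Kashiwara's argument, and nothing further is needed in the two-parameter setting.
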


\begin{lemma}
Let $V,$ $V_{\mathbb{Z}},$ $L_{0}$ and $L_{\infty}$ be as in the assumption of Lemma 7.6. Let $N$ be an $\mathcal{A}$-submodule of $V_{\mathbb{Z}}.$ Assume we have the following conditions$\mathrm{:}$\vskip1mm

$(\mathrm{i})$ $N\cap L_{0}\cap L_{\infty}\xrightarrow{\sim} (N\cap L_{0})/(N\cap vL_{0}).$

$(\mathrm{ii})$ There exist a $\mathbb{Z}[t, t^{-1}]$-module $F$ and a homomorphism $\psi: F\rightarrow V_{\mathbb{Z}}\cap (L_{0}+N)\cap (L_{\infty}+N)$ such that\vskip1mm

$(\mathrm{a})$ $V_{\mathbb{Z}}=\mathcal{A}\psi(F)+N$ and

$(\mathrm{b})$ the two homomorphisms induced by $\psi,$ $F\xrightarrow{\varphi} (L_{0}+\mathbb{Q}(v)[t^{\pm 1}]\otimes_{\mathcal{A}}N)/(vL_{0}+\mathbb{Q}(v)[t^{\pm 1}]\otimes_{\mathcal{A}}N)$ and $F\rightarrow (L_{\infty}+\mathbb{Q}(v)[t^{\pm 1}]\otimes_{\mathcal{A}}N)/(v^{-1}L_{\infty}+\mathbb{Q}(v)[t^{\pm 1}]\otimes_{\mathcal{A}}N)$ are injective.

Then we have \vskip1mm

$(\mathrm{i})$ $V_{\mathbb{Z}}\cap L_{0}\cap L_{\infty}\rightarrow V_{\mathbb{Z}}\cap L_{0}/V_{\mathbb{Z}}\cap vL_{0}$ is an isomorphism.

$(\mathrm{ii})$ $0\rightarrow N\cap L_{0}/N\cap vL_{0}\rightarrow V_{\mathbb{Z}}\cap L_{0}/V_{\mathbb{Z}}\cap vL_{0}\xrightarrow{\phi} (L_{0}+\mathbb{Q}(v)[t^{\pm 1}]\otimes_{\mathcal{A}}N)/(vL_{0}+\mathbb{Q}(v)[t^{\pm 1}]\otimes_{\mathcal{A}}N)$ is exact and $\phi(V_{\mathbb{Z}}\cap L_{0}/V_{\mathbb{Z}}\cap vL_{0})=\varphi(F).$
\end{lemma}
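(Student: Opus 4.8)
The plan is to follow Kashiwara's proof of [Kas2, Lemma 7.1.2] essentially line by line, treating the second parameter $t$ as a spectator. Every coefficient ring occurring here is the $t$-inverted analogue of the one used in [Kas2] --- $\mathbb{Z}[t^{\pm 1}]$ in place of $\mathbb{Z}$, $\mathbb{Z}[v, t^{\pm 1}]$ in place of $\mathbb{Z}[v]$, and $\mathbf{A},\bar{\mathbf{A}}$ as before --- and localization at $t$ is flat, so every short exact sequence and every base-change identity invoked by Kashiwara is preserved. Moreover, Kashiwara's argument never uses that these rings are principal ideal domains: it reads off the freeness it needs from the balanced lattice structure it produces, so the extra variable $t$ introduces no genuinely new difficulty.

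First I would record the content of hypothesis $(\mathrm{i})$: the triple $(N, N\cap L_0, N\cap L_\infty)$ satisfies the hypothesis of Lemma 7.6$(\mathrm{i})$ with $N$ in the role of $V_{\mathbb{Z}}$, so $N$ is a balanced triple, yielding $N\cap L_0\cong \mathbb{Z}[v, t^{\pm 1}]\otimes_{\mathbb{Z}[t^{\pm 1}]}(N\cap L_0\cap L_\infty)$, its $v=\infty$ counterpart, and the isomorphism $N\cap L_0\cap L_\infty\xrightarrow{\sim}(N\cap L_0)/(N\cap vL_0)$. Then I would pass to the quotient: writing $N_{\mathbb{Q}}=\mathbb{Q}(v)[t^{\pm 1}]\otimes_{\mathcal{A}}N$ and $\bar V=V/(\mathbb{Q}(v,t)\otimes_{\mathcal{A}}N)$, equip $\bar V$ with the induced lattices $\bar L_0=(L_0+N_{\mathbb{Q}})/N_{\mathbb{Q}}$, $\bar L_\infty=(L_\infty+N_{\mathbb{Q}})/N_{\mathbb{Q}}$ and $\mathcal{A}$-form $\bar V_{\mathbb{Z}}=V_{\mathbb{Z}}/N$. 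Hypothesis $(\mathrm{ii})(\mathrm{a})$ says that $F$ generates $\bar V_{\mathbb{Z}}$ over $\mathcal{A}$ and $(\mathrm{ii})(\mathrm{b})$ that the induced maps $F\to\bar L_0/v\bar L_0$ and $F\to\bar L_\infty/v^{-1}\bar L_\infty$ are injective; upgrading these injections to the isomorphisms required by Lemma 7.6$(\mathrm{ii})$, the surjectivity being supplied by the generation statement $(\mathrm{a})$, I obtain that the quotient triple is balanced with balanced basis $\varphi(F)$.

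Next I would assemble the comparison diagram with vertical balancedness maps, whose two rows are the $L_0\cap L_\infty$-version and the $L_0/vL_0$-version of $0\to N\to V_{\mathbb{Z}}\to \bar V_{\mathbb{Z}}\to 0$, namely
\[0\to N\cap L_0\cap L_\infty\to V_{\mathbb{Z}}\cap L_0\cap L_\infty\to \bar V_{\mathbb{Z}}\cap\bar L_0\cap\bar L_\infty\to 0\]
on top and
\[0\to N\cap L_0/N\cap vL_0\to V_{\mathbb{Z}}\cap L_0/V_{\mathbb{Z}}\cap vL_0\xrightarrow{\phi} \bar V_{\mathbb{Z}}\cap\bar L_0/\bar V_{\mathbb{Z}}\cap v\bar L_0\to 0\]
on the bottom, the map $\phi$ being the one into $\bar L_0/v\bar L_0$ whose image I must identify with $\varphi(F)$. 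The left vertical arrow is an isomorphism by hypothesis $(\mathrm{i})$ and the right one by the previous paragraph. Two steps remain: first, to prove the rows are exact --- the nontrivial points being surjectivity of the top row onto the quotient, where hypothesis $(\mathrm{ii})(\mathrm{a})$ is used to lift a balanced quotient element back into $V_{\mathbb{Z}}\cap L_0\cap L_\infty$, and injectivity of the bottom row at its left term, which is exactly the content of conclusion $(\mathrm{ii})$ and follows from the balancedness of $N$; second, to invoke the (short) five lemma, which then forces the middle vertical arrow $V_{\mathbb{Z}}\cap L_0\cap L_\infty\to (V_{\mathbb{Z}}\cap L_0)/(V_{\mathbb{Z}}\cap vL_0)$ to be an isomorphism, giving conclusion $(\mathrm{i})$, while reading off the bottom row and the right-hand column yields $\phi(V_{\mathbb{Z}}\cap L_0/V_{\mathbb{Z}}\cap vL_0)=\varphi(F)$.

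The step I expect to be the main obstacle is establishing these exactness statements, above all the compatibility needed to correct an element of $V_{\mathbb{Z}}\cap L_0$ lying over a balanced quotient element, by subtracting an element of $N$, so that it lies in $L_\infty$ as well while its class modulo $vL_0$ is unchanged (and the symmetric statement modulo $v^{-1}L_\infty$). Controlling this correction is precisely the $v$- and $v^{-1}$-adic bookkeeping carried out in [Kas2, Lemma 7.1.1], and it is here that the balancedness of $N$ and the lifting data $F$ must be used in tandem. Once it is in place the diagram chase is purely formal, and the spectator role of $t$ guarantees that nothing beyond the one-parameter argument is required.
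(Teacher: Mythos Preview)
Your proposal is correct and matches the paper's approach exactly: the paper gives no proof beyond the single sentence ``whose proof is similar to that of [Kas2, Lemma 7.1.1 and 7.1.2],'' so your strategy of transporting Kashiwara's argument with $t$ as a spectator is precisely what is intended. Your outline of the balanced-triple/five-lemma mechanism is in fact more detailed than anything the paper provides.
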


Let us consider the following collection $(G_{l})$ of statements for $l\geq 0.$

$(G_{l}.1)$ For any $\xi\in Q_{-}(l)$, $$U_{\mathbb{Z}}^{-}(\mathfrak{g})_{\xi}\cap L(\infty)\cap \overline{L(\infty)}\rightarrow L_{\mathbb{Z}}(\infty)_{\xi}/vL_{\mathbb{Z}}(\infty)_{\xi}$$
is an isomorphism.

$(G_{l}.2)$ For any $\xi\in Q_{-}(l)$ and $\lambda\in P_{+}$, $$V_{\mathbb{Z}}(\lambda)_{\lambda+\xi}\cap L(\lambda)\cap \overline{L(\lambda)}\rightarrow L_{\mathbb{Z}}(\lambda)_{\lambda+\xi}/vL_{\mathbb{Z}}(\lambda)_{\lambda+\xi}$$ is an isomorphism.

Let us denote by $b\mapsto G(b)$ and $b\mapsto G_{\lambda}(b)$ the inverses of these isomorphisms.

$(G_{l}.3)$ For any $\xi\in Q_{-}(l)$ and $n\geq 0$, $b\in \tilde{f}_{i}^{n}(B(\infty)_{\xi+n\alpha_{i}})$ implies that $G(b)\in f_{i}^{n}U_{v, t}^{-}(\mathfrak{g}).$

Note that when $l=0,$ these statements are obviously true. We shall prove $G_{l}$ by induction on $l.$ Let us assume $l> 0$ and $G_{l-1}$ holds.
\begin{lemma}
For any $\xi\in Q_{-}(l)$ and $\lambda\in P_{+}$, we have

$(\mathrm{i})$ $$U_{\mathbb{Z}}^{-}(\mathfrak{g})_{\xi}\cap L(\infty)=\bigoplus_{b\in B(\infty)_{\xi}}\mathbb{Z}[v, t^{\pm 1}]G(b),$$
$$U_{\mathbb{Z}}^{-}(\mathfrak{g})_{\xi}=\bigoplus_{b\in B(\infty)_{\xi}}\mathbb{Z}[v^{\pm 1}, t^{\pm 1}]G(b),$$
$$V_{\mathbb{Z}}(\lambda)_{\lambda+\xi}\cap L(\lambda)=\bigoplus_{b\in B(\lambda)_{\lambda+\xi}}\mathbb{Z}[v, t^{\pm 1}]G_{\lambda}(b),$$
$$V_{\mathbb{Z}}(\lambda)_{\lambda+\xi}=\bigoplus_{b\in B(\lambda)_{\lambda+\xi}}\mathbb{Z}[v^{\pm 1}, t^{\pm 1}]G_{\lambda}(b).$$

$(\mathrm{ii})$ $G(b)y_{\lambda}=G_{\lambda}(\bar{\pi}_{\lambda}(b))$ for any $b\in L_{\mathbb{Z}}(\infty)_{\xi}/vL_{\mathbb{Z}}(\infty)_{\xi}.$

$(\mathrm{iii})$ $\overline{G(b)}=G(b)$ for any $b\in L_{\mathbb{Z}}(\infty)_{\xi}/vL_{\mathbb{Z}}(\infty)_{\xi}$ and $\overline{G_{\lambda}(b)}=G_{\lambda}(b)$ for any $b\in L_{\mathbb{Z}}(\lambda)_{\lambda+\xi}/vL_{\mathbb{Z}}(\lambda)_{\lambda+\xi}.$
\end{lemma}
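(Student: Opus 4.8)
The plan is to run the weight induction through the abstract lattice Lemmas~7.6 and~7.7 under the dictionary $V_{\mathbb{Z}}=U_{\mathbb{Z}}^{-}(\mathfrak{g})_{\xi}$ (resp. $V_{\mathbb{Z}}(\lambda)_{\lambda+\xi}$), $L_{0}=L(\infty)_{\xi}$ (resp. $L(\lambda)_{\lambda+\xi}$) and $L_{\infty}=\overline{L(\infty)}_{\xi}$ (resp. $\overline{L(\lambda)}_{\lambda+\xi}$), noting that the bar automorphism $-$ fixes $t$, sends $v\mapsto v^{-1}$, stabilizes $U_{\mathbb{Z}}^{-}(\mathfrak{g})$, and interchanges $L_{0}$ with $L_{\infty}$. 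First I would establish the isomorphisms $(G_{l}.1)$ and $(G_{l}.2)$ by feeding Lemma~7.7 the data $N=$ the span of the level-$<l$ global basis vectors and $F$ coming from $B(\infty)_{\xi}$ (resp. $B(\lambda)_{\lambda+\xi}$), invoking $G_{l-1}$; this makes $G(b)$ and $G_{\lambda}(b)$ well defined and is where the machinery does its work. Part~$(\mathrm{i})$ is then formal: $(G_{l}.1)$ is precisely the hypothesis of Lemma~7.6$(\mathrm{i})$, so its conclusions give $U_{\mathbb{Z}}^{-}(\mathfrak{g})_{\xi}\cap L(\infty)\cong\mathbb{Z}[v,t^{\pm1}]\otimes_{\mathbb{Z}[t^{\pm1}]}\mathcal{G}_{\xi}$ and $U_{\mathbb{Z}}^{-}(\mathfrak{g})_{\xi}\cong\mathbb{Z}[v^{\pm1},t^{\pm1}]\otimes_{\mathbb{Z}[t^{\pm1}]}\mathcal{G}_{\xi}$, where $\mathcal{G}_{\xi}:=U_{\mathbb{Z}}^{-}(\mathfrak{g})_{\xi}\cap L(\infty)\cap\overline{L(\infty)}$ carries the $\mathbb{Z}[t^{\pm1}]$-basis $\{G(b):b\in B(\infty)_{\xi}\}$ by construction of $G$ and Proposition~7.1; reading off the summands yields the first two identities, and the $V_{\mathbb{Z}}(\lambda)$-versions follow identically from $(G_{l}.2)$ and the basis $B(\lambda)_{\lambda+\xi}$.

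For part~$(\mathrm{ii})$ I would use that right multiplication by $y_{\lambda}$, i.e. $\pi_{\lambda}$, is $U_{v,t}^{-}(\mathfrak{g})$-linear, carries $U_{\mathbb{Z}}^{-}(\mathfrak{g})_{\xi}$ into $V_{\mathbb{Z}}(\lambda)_{\lambda+\xi}$, intertwines the bar involutions since $\overline{Py_{\lambda}}=\overline{P}\,y_{\lambda}$, and satisfies $\pi_{\lambda}(L(\infty))=L(\lambda)$ by Theorem~5.1$(c)$, whence $\pi_{\lambda}(\overline{L(\infty)})\subseteq\overline{L(\lambda)}$. Consequently $G(b)y_{\lambda}$ lies in $V_{\mathbb{Z}}(\lambda)_{\lambda+\xi}\cap L(\lambda)\cap\overline{L(\lambda)}$ and, because $G(b)\equiv b$ and $\pi_{\lambda}$ induces $\bar{\pi}_{\lambda}$, reduces to $\bar{\pi}_{\lambda}(b)$ modulo $vL(\lambda)$; the uniqueness encoded in the isomorphism $(G_{l}.2)$ forces $G(b)y_{\lambda}=G_{\lambda}(\bar{\pi}_{\lambda}(b))$.

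Part~$(\mathrm{iii})$ is the crux. Since $-$ stabilizes $U_{\mathbb{Z}}^{-}(\mathfrak{g})_{\xi}$ and swaps $L(\infty)$ with $\overline{L(\infty)}$, it maps $\mathcal{G}_{\xi}$ to itself, so $\overline{G(b)}\in\mathcal{G}_{\xi}$; by the uniqueness in $(G_{l}.1)$ it therefore suffices to exhibit a single bar-invariant element of $\mathcal{G}_{\xi}$ reducing to $b$ modulo $vL(\infty)$, for that element must then coincide with $G(b)$. I emphasize that bar-invariance is genuinely extra information and not automatic from the triple-intersection characterization: writing $\overline{G(b)}=\sum_{b'}c_{b'b}G(b')$ with $c_{b'b}\in\mathbb{Z}[t^{\pm1}]$, the involution property gives only $C^{2}=I$ (as $\overline{t}=t$), which does not by itself force $C=\mathrm{id}$. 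To produce the required element I would argue inductively: picking $i$ with $\tilde{e}_{i}b\neq0$ and setting $b'=\tilde{e}_{i}^{\,n}b$ with $n=\varepsilon_{i}(b)$, the vector $b'$ sits at level $<l$, so $G(b')$ is bar-invariant by $G_{l-1}$; using that the divided powers are bar-invariant ($\overline{f_{i}}=f_{i}$ and $\overline{[n]_{v_{i}}}=[n]_{v_{i}}$, since $\overline{v_{i}}=v_{i}^{-1}$) one obtains a rough bar-invariant lift of $b$ at level $l$ and removes its non-bar-invariant tail by Lusztig's triangularity lemma. The correction lives in the lower part $N$, and the verification that the resulting element genuinely lies in all three of $U_{\mathbb{Z}}^{-}(\mathfrak{g})_{\xi}$, $L(\infty)$ and $\overline{L(\infty)}$ is exactly the delicate bookkeeping supplied by the grand-loop argument and Lemma~7.7; this is the step I expect to be the main obstacle. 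Finally the $\lambda$-statement drops out of part~$(\mathrm{ii})$: for $b$ with $\bar{\pi}_{\lambda}(b)\neq0$ one has $\overline{G_{\lambda}(\bar{\pi}_{\lambda}(b))}=\overline{G(b)y_{\lambda}}=\overline{G(b)}\,y_{\lambda}=G(b)y_{\lambda}=G_{\lambda}(\bar{\pi}_{\lambda}(b))$, and since $\bar{\pi}_{\lambda}$ maps onto $B(\lambda)_{\lambda+\xi}$ (Theorem~5.1) this covers every basis vector.
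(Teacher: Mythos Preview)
The paper omits the proof of this lemma, deferring to Kashiwara~[Kas2, \S7.3--7.5]; so the comparison is with Kashiwara's argument. Your treatment of parts~$(\mathrm{i})$ and~$(\mathrm{ii})$ is correct and is exactly how it goes: $(\mathrm{i})$ is a direct read-off of Lemma~7.6$(\mathrm{i})$ once $(G_{l}.1)$--$(G_{l}.2)$ are in hand, and $(\mathrm{ii})$ is the compatibility of $\pi_{\lambda}$ with the three lattices plus uniqueness.

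Your discussion of~$(\mathrm{iii})$ is right in spirit but slightly muddled in two places. First, your choice of $N$ in Lemma~7.7 (``span of the level-$<l$ global basis vectors'') cannot be what is meant: the ambient space is $U_{\mathbb{Z}}^{-}(\mathfrak{g})_{\xi}$ with $|\xi|=l$, so there are no lower-level vectors inside it. In Kashiwara's scheme one first proves Proposition~7.9 (for each fixed $i$ and $n\ge 1$) via Lemma~7.6$(\mathrm{ii})$, taking $\varphi(b)=f_{i}^{(n)}G(\tilde{e}_{i}^{\,n}b)$ with the $G$ supplied by $G_{l-1}$; then one feeds Lemma~7.7 the submodule $N=(f_{i}U_{v,t}^{-}(\mathfrak{g}))^{\mathbb{Z}}_{\xi}$ (for a suitable $i$), not a lower-weight span, and the map $\psi$ again has values of the form $f_{j}^{(m)}G(b')$. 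Second, the ``remove its non-bar-invariant tail by Lusztig's triangularity'' step is misplaced: the rough lift $f_{i}^{(n)}G(b')$ is \emph{already} bar-invariant, because $\overline{f_{i}^{(n)}}=f_{i}^{(n)}$ and $G(b')$ is bar-invariant by the inductive hypothesis. There is no bar-correction to make; the genuine obstacle is exactly the lattice membership you flag afterwards, namely showing that $f_{i}^{(n)}G(b')$ lies in $L(\infty)$ (and hence, by bar-invariance, in $\overline{L(\infty)}$). That is precisely what the arguments of [Kas2, \S7.4] supply. Once this is known, the bar-invariant element sits in the triple intersection and reduces to $b$, so by uniqueness it equals $G(b)$ and $(\mathrm{iii})$ follows. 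Your counterexample-style remark that $C^{2}=I$ alone does not force $C=\mathrm{id}$ is well taken and is the reason the construction must be invoked; Lusztig-type triangularity is an alternative route to bar-invariant lifts, but it is not the mechanism used here.
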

\begin{proposition}
For any $\xi\in Q_{-}(l)$, $\lambda\in P_{+}$, $n\geq 1$ and $i\in I$, we have
\begin{align*}(f_{i}^{n}V(\lambda))^{\mathbb{Z}}_{\lambda+\xi}\cap L(\lambda)\cap \overline{L(\lambda)}&\xrightarrow{\sim} (f_{i}^{n}V(\lambda))^{\mathbb{Z}}_{\lambda+\xi}\cap L(\lambda)/v((f_{i}^{n}V(\lambda))^{\mathbb{Z}}_{\lambda+\xi}\cap L(\lambda))\\&\simeq \bigoplus_{b\in B(\lambda)_{\lambda+\xi}\cap \tilde{f}_{i}^{n}B(\lambda)}\mathbb{Z}[t, t^{-1}]b,\end{align*}\vskip-6mm
\begin{align*}(f_{i}^{n}U_{v, t}^{-}(\mathfrak{g}))^{\mathbb{Z}}_{\xi}\cap L(\infty)\cap \overline{L(\infty)}&\xrightarrow{\sim} (f_{i}^{n}U_{v, t}^{-}(\mathfrak{g}))^{\mathbb{Z}}_{\xi}\cap L(\infty)/(f_{i}^{n}U_{v, t}^{-}(\mathfrak{g}))^{\mathbb{Z}}_{\xi}\cap vL(\infty)\\&\simeq \bigoplus_{b\in B(\infty)_{\xi}\cap \tilde{f}_{i}^{n}B(\infty)}\mathbb{Z}[t, t^{-1}]b.\end{align*}
\end{proposition}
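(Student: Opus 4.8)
The plan is to derive both isomorphisms from the inductive three-lattice Lemma 7.7, running a downward induction on $n$ that sits inside the grand loop induction on $l$ (so that $G_{l-1}$, Lemma 7.8, and hence all lower-height global basis elements are already available). I would prove the statement for $U_{v,t}^{-}(\mathfrak{g})$ first and then transport it to $V(\lambda)$ through $\pi_\lambda$, using $(f_i^n V(\lambda))^{\mathbb{Z}}=(f_i^n U_{v,t}^{-}(\mathfrak{g}))^{\mathbb{Z}}y_\lambda$, $\pi_\lambda(L(\infty))=L(\lambda)$, and the compatibility $G(b)y_\lambda=G_\lambda(\bar{\pi}_\lambda(b))$ of Lemma 7.8(ii).

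First I would fix $\xi\in Q_{-}(l)$ and $i\in I$, and set the ambient space $V=(f_i^n U_{v,t}^{-}(\mathfrak{g}))_\xi$ together with $V_{\mathbb{Z}}=(f_i^n U_{v,t}^{-}(\mathfrak{g}))^{\mathbb{Z}}_\xi$, $L_0=L(\infty)\cap V$, $L_\infty=\overline{L(\infty)}\cap V$, and $N=(f_i^{n+1} U_{v,t}^{-}(\mathfrak{g}))^{\mathbb{Z}}_\xi$. The required base-change identifications are routine from $(f_i^k U_{v,t}^{-}(\mathfrak{g}))^{\mathbb{Z}}_\xi=\sum_{m\geq k}f_i^{(m)}U_{\mathbb{Z}}^{-}(\mathfrak{g})$. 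For $n$ large the space $V$ vanishes, giving the base case, and the inductive step for $n+1$ supplies exactly hypothesis $(\mathrm{i})$ of Lemma 7.7 for the submodule $N$.

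To feed hypothesis $(\mathrm{ii})$ to Lemma 7.7 I would take $F=\bigoplus_{b}\mathbb{Z}[t,t^{-1}]b$, the sum over $b\in B(\infty)_\xi$ with $\varepsilon_i(b)=n$, and define $\psi(b)=f_i^{(n)}G(b')$ where $b'=\tilde{e}_i^{\,n}b\in B(\infty)_{\xi+n\alpha_i}$. Since $|\xi+n\alpha_i|=|\xi|-n\leq l-1$, the element $G(b')$ is already available from $G_{l-1}$ and Lemma 7.8, and $\psi(b)\in V_{\mathbb{Z}}$ with $\psi(b)\equiv b \bmod vL(\infty)$. Granting hypothesis $(\mathrm{ii})$, Lemma 7.7 yields the first asserted isomorphism together with the short exact sequence relating the $n$- and $(n+1)$-graded pieces; splicing this exact sequence across the induction assembles the direct sum $\bigoplus_{\varepsilon_i(b)\geq n}\mathbb{Z}[t,t^{-1}]b$, since $B(\infty)_\xi\cap\tilde{f}_i^{\,n}B(\infty)=\{b;\varepsilon_i(b)\geq n\}$. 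The $V(\lambda)$ case then follows verbatim with $G_\lambda$, $V_{\mathbb{Z}}(\lambda)$, $L(\lambda)$ in place of $G$, $V_{\mathbb{Z}}$, $L(\infty)$, the nonvanishing classes being exactly those $b$ with $\bar{\pi}_\lambda(b)\neq 0$.

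The hard part will be verifying hypothesis $(\mathrm{ii})$ of Lemma 7.7, namely that the lifts $\psi(b)=f_i^{(n)}G(\tilde{e}_i^{\,n}b)$ together with $N$ span $V_{\mathbb{Z}}$ over $\mathcal{A}$ and that they induce injective maps into the associated graded modules at $v=0$ and at $v=\infty$. Concretely one must control $f_i^{(n)}G(b')$ modulo the deeper filtration $N$, showing $\psi(b)\in (L_0+N)\cap(L_\infty+N)$; this rests on comparing $f_i^{(n)}G(b')$ with $\tilde{f}_i^{\,n}G(b')$ through the $i$-string description of Lemma 5.2 and the binomial identities of Lemma 5.4, and on the bar-invariance $\overline{G(b')}=G(b')$ from Lemma 7.8(iii) to secure the behaviour at $v=\infty$. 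This compatibility between the $f_i$-filtration and the global basis is the genuine content of the proposition.
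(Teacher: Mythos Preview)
Your plan is correct and is exactly the argument the paper defers to (namely [Kas2, \S7.3--7.4]): downward induction on $n$ via Lemma~7.7 with $N=(f_i^{n+1}U_{v,t}^{-})^{\mathbb Z}_\xi$, $F$ indexed by $\{b:\varepsilon_i(b)=n\}$, and $\psi(b)=f_i^{(n)}G(\tilde e_i^{\,n}b)$, the lower-height inputs being supplied by $(G_{l-1})$ and Lemma~7.8. Two small corrections: the $V(\lambda)$ statement is obtained by running the \emph{same} Lemma~7.7 argument with $V_{\mathbb Z}(\lambda)$, $L(\lambda)$, $G_\lambda$ in place of $U_{\mathbb Z}^{-}$, $L(\infty)$, $G$ (transport through $\pi_\lambda$ is awkward since $\pi_\lambda$ is not injective), and the membership $\psi(b)\in L_0+N$ comes simply from $f_i^{(n)}f_i^{(m)}=\left[\begin{smallmatrix}n+m\\ m\end{smallmatrix}\right]_{v_i}f_i^{(n+m)}$ applied to the $i$-string decomposition of $G(b')$ together with Lemma~5.2, not from Lemma~5.4.
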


The remaining components of the inductive proof of $(G_{l}.1)$-$(G_{l}.3)$ go forward just as in [Kas2, $\S7.4$-$\S7.5$].

We summarize the main results on global crystal bases as follows.
\begin{theorem}
$(\mathrm{i})$ $\{G(b); b\in B(\infty)\}$ is a bar-invariant $\mathcal{A}$-basis of $U_{\mathbb{Z}}^{-}(\mathfrak{g}),$ and a $\mathbb{Q}(v, t)$-basis of $U_{v, t}^{-}(\mathfrak{g}).$ Moreover, $G(b^{*})=G(b)^{*}$ for any $b\in B(\infty).$

$(\mathrm{ii})$ $\{G_{\lambda}(b); b\in B(\lambda)\}$ is a bar-invariant $\mathcal{A}$-basis of $V_{\mathbb{Z}}(\lambda)$, and a $\mathbb{Q}(v, t)$-basis of $V(\lambda).$ Moreover, $G(b)y_{\lambda}=G_{\lambda}(\bar{\pi}_{\lambda}(b))$ for any $b\in B(\infty)$ and $\lambda\in P_{+}.$

$(\mathrm{iii})$ For any $i\in I,$ $n\geq 0$ and $\lambda\in P_{+}$, we have
$$f_{i}^{n}U_{v, t}^{-}(\mathfrak{g})\cap U_{\mathbb{Z}}^{-}(\mathfrak{g})=\bigoplus_{b\in \tilde{f}_{i}^{n}B(\infty)}\mathcal{A}G(b),$$
$$f_{i}^{n}V(\lambda)\cap V_{\mathbb{Z}}(\lambda)=\bigoplus_{b\in B(\lambda)\cap \tilde{f}_{i}^{n}B(\lambda)}\mathcal{A}G_{\lambda}(b).$$
\end{theorem}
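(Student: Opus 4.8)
The plan is to assemble Theorem 7.10 by summing over $\xi\in Q_{-}$ the weight-graded statements already obtained in the grand induction, all of which now hold for every $l$ (hence every $\xi$) once $(G_{l}.1)$-$(G_{l}.3)$ are established. The analytic content sits entirely in Lemma 7.8 and Proposition 7.9, which describe each space $U_{\mathbb{Z}}^{-}(\mathfrak{g})_{\xi}$, $V_{\mathbb{Z}}(\lambda)_{\lambda+\xi}$ and their $f_{i}$-filtered pieces; the theorem is their global assembly. I would begin by recording the weight decompositions $U_{\mathbb{Z}}^{-}(\mathfrak{g})=\bigoplus_{\xi}U_{\mathbb{Z}}^{-}(\mathfrak{g})_{\xi}$, $V_{\mathbb{Z}}(\lambda)=\bigoplus_{\xi}V_{\mathbb{Z}}(\lambda)_{\lambda+\xi}$, together with $B(\infty)=\bigsqcup_{\xi}B(\infty)_{\xi}$ and $B(\lambda)=\bigsqcup_{\xi}B(\lambda)_{\lambda+\xi}$, so that the summation is legitimate.

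For part (i), summing the display $U_{\mathbb{Z}}^{-}(\mathfrak{g})_{\xi}=\bigoplus_{b\in B(\infty)_{\xi}}\mathcal{A}G(b)$ of Lemma 7.8(i) over all $\xi$ gives $U_{\mathbb{Z}}^{-}(\mathfrak{g})=\bigoplus_{b\in B(\infty)}\mathcal{A}G(b)$, the asserted $\mathcal{A}$-basis; since $U_{v,t}^{-}(\mathfrak{g})\cong\mathbb{Q}(v,t)\otimes_{\mathcal{A}}U_{\mathbb{Z}}^{-}(\mathfrak{g})$, extension of scalars yields the $\mathbb{Q}(v,t)$-basis, and bar-invariance $\overline{G(b)}=G(b)$ is Lemma 7.8(iii). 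The identity $G(b^{*})=G(b)^{*}$ is the one substantive point: I would first verify on the generators $f_{i}$ that $*$ and the bar involution commute on $U_{v,t}^{-}(\mathfrak{g})$ (both fix each $f_{i}$, only $*$ reverses words, and the scalar substitutions $v\mapsto v^{-1}$ and $t\mapsto t^{-1}$ commute). Since $U_{\mathbb{Z}}^{-}(\mathfrak{g})$ is $*$-stable and $L(\infty)^{*}=L(\infty)$ by Proposition 6.7 (whence $\overline{L(\infty)}$ is $*$-stable as well), applying $*$ to the three defining properties of $G(b)$ shows that $G(b)^{*}$ is a bar-invariant element of $U_{\mathbb{Z}}^{-}(\mathfrak{g})\cap L(\infty)\cap\overline{L(\infty)}$ congruent to $b^{*}$ modulo $vL(\infty)$; because $b^{*}\in B(\infty)$ by Theorem 7.3, the uniqueness characterizing $G$ forces $G(b)^{*}=G(b^{*})$.

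Part (ii) runs in parallel: summing the display $V_{\mathbb{Z}}(\lambda)_{\lambda+\xi}=\bigoplus_{b\in B(\lambda)_{\lambda+\xi}}\mathcal{A}G_{\lambda}(b)$ of Lemma 7.8(i) over $\xi$ produces the $\mathcal{A}$-basis $\{G_{\lambda}(b)\}$ of $V_{\mathbb{Z}}(\lambda)$ and, by base change, the $\mathbb{Q}(v,t)$-basis of $V(\lambda)$, with bar-invariance again from Lemma 7.8(iii); the compatibility $G(b)y_{\lambda}=G_{\lambda}(\bar{\pi}_{\lambda}(b))$ is precisely Lemma 7.8(ii). For part (iii), at each weight the statement is Proposition 7.9, so I would sum its two isomorphisms over $\xi$. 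One inclusion is immediate from $(G_{l}.3)$, which gives $G(b)\in f_{i}^{n}U_{v,t}^{-}(\mathfrak{g})$ for $b\in\tilde{f}_{i}^{n}B(\infty)$ and hence $\bigoplus_{b\in\tilde{f}_{i}^{n}B(\infty)}\mathcal{A}G(b)\subseteq f_{i}^{n}U_{v,t}^{-}(\mathfrak{g})\cap U_{\mathbb{Z}}^{-}(\mathfrak{g})$; the reverse inclusion follows because Proposition 7.9 identifies the mod-$v$ reduction of $(f_{i}^{n}U_{v,t}^{-}(\mathfrak{g}))^{\mathbb{Z}}\cap L(\infty)$ with the span of exactly these $b$, which pins down the rank and rules out any other $G(b)$. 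The $V(\lambda)$ case is identical.

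The only step that is not pure bookkeeping is the star-compatibility $G(b^{*})=G(b)^{*}$ in part (i): the main thing to get right is that the bar involution and $*$ commute on $U_{v,t}^{-}(\mathfrak{g})$ and that both $L(\infty)$ and $U_{\mathbb{Z}}^{-}(\mathfrak{g})$ are stable under $*$, after which Theorem 7.3 and the uniqueness of the global basis element close the argument. Everything else reduces to summing the already-proved weight-graded identities over $Q_{-}$.
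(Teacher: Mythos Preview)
Your proposal is correct and matches the paper's own treatment: the paper presents Theorem 7.10 explicitly as a summary (``We summarize the main results on global crystal bases as follows'') of the weight-graded statements already proved in Lemma 7.8, Proposition 7.9, and the completed induction $(G_l.1)$--$(G_l.3)$, with no separate proof given. Your argument for the star-compatibility $G(b^{*})=G(b)^{*}$ --- checking that $*$ and the bar involution commute on $U_{v,t}^{-}(\mathfrak{g})$, invoking $L(\infty)^{*}=L(\infty)$ and $B(\infty)^{*}=B(\infty)$, and then appealing to the uniqueness built into $(G_l.1)$ --- is exactly the standard one and is the only point requiring any work beyond direct summation.
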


\noindent $\mathbf{Remark ~7.11.}$ From the constructions as above, we can see that the global crystal basis of $U_{v, t}^{-}(\mathfrak{g})$ over $\mathbb{Q}(v, t)$ coincides with that of $U_{v}^{-}(\mathfrak{g})$ over $\mathbb{Q}(v),$ so it is the same as the canonical basis constructed in [FL1] up to a 2-cocycle deformation.

\vskip3mm Acknowledgements. I am deeply indebted to Professor Weiqiang Wang and Dr. Zhaobing Fan for very helpful comments on a preliminary version.



Mathematical Sciences Center, Tsinghua University.

Beijing, 100084, P. R. China.

E-mail address: cwdeng@amss.ac.cn

\end{document}